\newtheorem{theorem}{Theorem}[section]
\newtheorem{corollary}[theorem]{Corollary}
\newtheorem{definition}[theorem]{Definition}
\newtheorem{example}[theorem]{Example}
\newtheorem{lemma}[theorem]{Lemma}
\newtheorem{Fact}[theorem]{Fact}
\newtheorem{ques}[theorem]{Question}
\newtheorem{proposition}[theorem]{Proposition}
\newtheorem{remark}[theorem]{Remark}
    \newcommand{\Art}{{\mathrm{Art}}} 
\begin{document}

\title{Splittings and  poly-freeness of triangle Artin groups}
\author{Xiaolei Wu}
\address{Shanghai Center for Mathematical Sciences, Jiangwan Campus, Fudan University, No.2005 Songhu Road, Shanghai, 200438, P.R. China}
\email{xiaoleiwu@fudan.edu.cn}

\author{Shengkui Ye}
\address{NYU Shanghai, No.567 Yangsi West 
  Rd, Pudong New Area, Shanghai, 200124, P.R. China \\
NYU-ECNU Institute of Mathematical Sciences at NYU Shanghai, 3663 Zhongshan Road North, Shanghai, 200062, China}
\email{sy55@nyu.edu}


\maketitle

\begin{abstract}
We prove that the triangle Artin group $\mathrm{Art}_{23M}$ splits as a
graph of free groups if and only if $M$ is greater than $5$ and even.  This answers two questions of 
Jankiewicz \cite[Question 2.2, Question 2.3]{Jan21} in the negative. Combined with the results of Squier and Jankiewicz, this completely determines when a triangle Artin group splits as a graph of free groups. Furthermore, we prove that
 the triangle Artin groups are virtually poly-free when the labels are not
of the form $(2,3, 2k+1)$ with $k\geq 3$. This partially answers a question of Bestvina \cite{Be99}.
\end{abstract}

\section{Introduction}
The study of groups given by presentations has a long history. Artin groups
 provide a particularly interesting class of such groups that has been under intensive research for decades.  Although there are already tremendous
studies, the structures of Artin groups are still full of mystery.  In this
article, we focus on the triangle Artin groups which can be defined by the following presentation 
\begin{equation*}
\mathrm{Art}_{MNP}=\langle a,b,c\mid
\{a,b\}_{M}=\{b,a\}_{M},\{b,c\}_{N}=\{c,b\}_{N},\{c,a\}_{P}=\{a,c\}_{P}%
\rangle,
\end{equation*}%
where $\{a,b\}_{M}$ denotes the alternating product $aba\cdots $ of length $M
$ starting with $a$, $2\leq M,N,P <\infty$.
Squier \cite{Sq87} showed that the Euclidean triangle Artin groups, i.e.~\textrm{Art}$_{236}$, \textrm{Art}$_{244}$ and \textrm{Art}$_{333}$, split
as amalgamated products or an HNN extension of finite-rank free groups along
finite index subgroups. This was generalized by Jankiewicz \cite{Jan22,Jan21}
to  triangle Artin groups $\mathrm{Art}_{MNP}$ with $M\leq N\leq P$
satisfying either $M>2$, or $N>3.$ Thus it is a natural question whether such splitting holds for all triangle Artin groups. In fact, the following two questions were asked by
Jankiewicz \cite[Question 2.2, Question 2.3]{Jan21} :

\begin{ques}\label{ques-split-tri-art}
Does the Artin group $\mathrm{Art}_{23M}$ where $M\geq 7$ splits as a graph
of finite rank free groups?
\end{ques}


\begin{ques}\label{ques-split-2dim-art}
Do all 2-dimensional Artin groups split as a graph of finite rank free
groups?
\end{ques}

Jankiewicz further conjectured in \cite{Jan21} that the answer to Question \ref{ques-split-tri-art} should be positive.  In this article, we give negative answers to both questions. Let $F_n$ be the free group of rank $n$, our main theorem can be stated as the following:

\begin{theorem}
\label{th1}When $M$ is odd, the Artin group $\mathrm{Art}_{23M}$ can not
split as a nontrivial graph of free groups. When $M>4$ is even, the Artin
group $\mathrm{Art}_{23M}$ is isomorphic to $F_{3}\ast _{F_{7}}F_{4},$ an
amalgamated product the free groups $F_{3}$ and $F_{4}$ over a subgroup $%
F_{7}. $
\end{theorem}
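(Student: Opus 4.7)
The statement breaks into two opposite kinds of assertions, and I would handle them by opposite methods: an obstruction argument for the odd case, and an explicit construction for the even case.

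\emph{Odd case.} Argue by contradiction: suppose $G=\mathrm{Art}_{23M}$ acts minimally and nontrivially on a Bass--Serre tree $T$ with every vertex stabilizer a finite-rank free group (whence every edge stabilizer is free as well). The three standard rank-$2$ parabolic subgroups $\langle a,b\rangle\cong\mathbb{Z}^{2}$, $\langle b,c\rangle\cong B_{3}$ and $\langle a,c\rangle=\mathrm{Art}_{M}$ are all non-free, so none can be elliptic; each must contain a loxodromic element with its own axis in $T$. The oddness of $M$ enters crucially: the relations $bcb=cbc$ and $\{a,c\}_{M}=\{c,a\}_{M}$ yield explicit conjugators $cb$ and $(ac)^{(M-1)/2}$ showing that $a$, $b$, $c$ are pairwise conjugate in $G$. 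Hence the three generators have equal translation length on $T$ and their axes are related by known conjugators. The goal is to show that these constraints, together with the nontrivial centers of the parabolics (notably $Z(B_{3})\cong\mathbb{Z}$, which must lie on the axis of every loxodromic central element and hence determine an invariant line), are incompatible with any nontrivial action, forcing a global fixed vertex and so a trivial splitting. The main obstacle is pinning down the axis and endpoint configuration on $T$ that rules out all nontrivial actions.

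\emph{Even case, $M=2k\geq 6$.} The plan is constructive. Since $M$ is even, $(ac)^{k}=(ca)^{k}$, and $z=(ac)^{k}$ is central in the dihedral parabolic $\langle a,c\rangle$. Using $z$ as a handle, I would (i) exhibit explicit free subgroups $H_{1}\subset G$ of rank $3$ and $H_{2}\subset G$ of rank $4$, generated by carefully chosen words in $a$, $b$, $c$ and $z$; (ii) identify $H_{1}\cap H_{2}$ as a free group of rank $7$; and (iii) verify that the canonical homomorphism $H_{1}\ast_{H_{1}\cap H_{2}}H_{2}\to G$ is an isomorphism. Surjectivity amounts to writing each of $a$, $b$, $c$ as a product of the chosen generators of $H_{1}$ and $H_{2}$. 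Injectivity, the delicate step, is best established by constructing an action of $G$ on a simplicial tree realizing the claimed graph of groups, in the spirit of the Squier and Jankiewicz splittings already cited. As a consistency check, $\chi(F_{3}\ast_{F_{7}}F_{4})=-2-3+6=1$ matches $\chi(\mathrm{Art}_{23M})=1-3+3=1$ computed from the Salvetti $2$-complex (one vertex, three edges, three $2$-cells of perimeter $4$, $6$, $2M$). The main obstacle is to identify the rank-$7$ intersection correctly and to verify injectivity, which I would attempt by producing an equivariant tree whose quotient graph of groups has the prescribed vertex and edge data.
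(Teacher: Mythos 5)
Your framework for both halves matches the paper's in spirit, but there are real gaps. For the odd case, your observation that $\langle a,b\rangle\cong\mathbb{Z}^2$, $\langle b,c\rangle$ and $\langle a,c\rangle$ cannot be elliptic is correct and is part of the paper's analysis, as is the use of the conjugacy of $a,b,c$ and the central element $(bc)^{3}$ (resp. $(ab)^{M}$). However, your stated goal---``incompatible with any nontrivial action, forcing a global fixed vertex''---is not what is actually true, and aiming for it will lead you astray. $\mathrm{Art}_{23M}$ does have nontrivial minimal actions on trees (for instance the action on a line induced by abelianization, or, when $M\equiv 3\pmod 6$, a genuinely tree-like action surjecting onto $\mathbb{Z}/3\ast\mathbb{Z}/2$). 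The paper's argument (Theorem~\ref{thm-tree-action-clf}, Corollaries~\ref{5.7}--\ref{5.8}) instead classifies all minimal actions into three cases and, in the non-fixed-point cases, shows that a vertex stabilizer is provably \emph{non-free} (the commutator subgroup in the line case, a subgroup with a nontrivial relation in the remaining case). You need to separate those branches rather than collapsing everything to a fixed vertex; otherwise the argument simply cannot close.

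For the even case, your Euler-characteristic consistency check is a nice sanity test and your plan of finding a $G$-action on a tree realizing the claimed graph of groups is the right idea. But ``carefully chosen words in $a,b,c,z$'' is where the proof actually lives, and without the key input you have no way to locate them: the paper uses Hanham's alternative presentation of $\mathrm{Art}_{2,3,2m}$, whose presentation $2$-complex $X_H$ admits a Morse-theoretic height function $h\colon X_H\to[0,1/2]$; the level sets $X_0$, $X_{1/2}$, $X_{1/4}$ are graphs with fundamental groups $F_3$, $F_4$, $F_7$, and injectivity of $F_7\hookrightarrow F_3$ is verified by exhibiting an explicit combinatorial immersion (Stallings), while $F_7\hookrightarrow F_4$ is a $2$-fold covering. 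You should make that presentation (or an equivalent device for producing the height function) explicit; without it, item (iii), the injectivity of $H_1\ast_{H_1\cap H_2}H_2\to G$, has no mechanism behind it.
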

\begin{remark}
   When $M\leq 5,$ the Artin group $\mathrm{Art}_{23M}$ is of finite type and of geometric dimension 3. Thus it can not split as a graph of free groups since a graph of free groups has geometric dimension $2$ (see also 
\cite[Corollary 2.11]{Jan21} for a different argument). Combined with results in \cite{Sq87,Jan22, Jan21}, Theorem \ref{th1} implies that a triangle Artin group $A_{MNP}$ splits as a graph of free groups if and only if it is neither of finite type, nor of the form $A_{23M}$ where $M$ is odd. 
\end{remark}

Related to the graphs of free group structure, one also has the following question of Bestvina \cite{Be99}.

\begin{ques}\label{ques-vpf}
  Is every Artin group (of finite type)  virtually
poly-free?  
\end{ques}

Recall that a group $G$ is poly-free if there is a finite subnormal sequence%
\begin{equation*}
1<G_{1}\trianglelefteq G_{2}\trianglelefteq \cdots \trianglelefteq G_{n}=G
\end{equation*}%
such that the successive quotient $G_{i+1}/G_{i}$ is free for each $i$. Note that a poly-free group is locally indicable, i.e.~every finitely generated subgroup surjects to the infinite cyclic group $\mathbb{Z}$. And a group $G$ has a virtual property $P$ if there is a finite-index subgroup $H \leq G$ has property $P$.    Although some progress has been made on Question \ref{ques-vpf} \cite{BG21,BMP19,Wu22}, it is still open widely for
 Artin groups in general. Restricted to the triangle Artin groups, they are known to be virtually poly-free, if they are of finite type, Euclidean type (see the proof of Theorem \ref{th3} on p.\pageref{proof:thm3} for a detailed discussion), or  all labels are even \cite{BG21}.  Our next theorem extends this significantly.

\begin{theorem}
\label{th3}Suppose that $ M\leq N \leq P$ and $(M,N,P) \neq (2,3, 2k+1)$ for some integer $k\geq 3$. Then the Artin
group $\mathrm{Art}_{MNP}$ is virtually poly-free.
\end{theorem}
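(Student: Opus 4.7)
The plan is a case analysis on the label $(M,N,P)$ with $M\le N\le P$. The finite (spherical) triangle Artin groups --- the triples $(2,2,P)$ for all $P$, and $(2,3,P)$ for $P\in\{3,4,5\}$ --- are handled directly: $\mathrm{Art}_{22P}$ decomposes as the direct product $\mathbb{Z}\times\mathrm{Art}_{I_2(P)}$ and is therefore virtually poly-free since the dihedral Artin group $\mathrm{Art}_{I_2(P)}$ is; and the three remaining spherical cases (types $A_3,B_3,H_3$) are virtually poly-free by classical arguments for spherical Artin groups (Fadell--Neuwirth for the braid-like pieces, and separate constructions for $H_3$). The Euclidean triples $(2,3,6)$, $(2,4,4)$ and $(3,3,3)$ are then treated separately, as promised on p.24, using their explicit structure as virtually free-by-free-abelian groups.

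For every remaining triple with $(M,N,P)\neq(2,3,2k+1)$, either Jankiewicz's splitting theorems \cite{Jan22,Jan21} (when $M\ge 3$ or $N\ge 4$) or Theorem \ref{th1} (when $M=2$, $N=3$, and $P\ge 6$ is even) exhibits $\mathrm{Art}_{MNP}$ as a graph of finitely generated free groups. A key additional feature of these splittings, which I would verify case by case from the explicit constructions, is that each edge group embeds as a subgroup of \emph{finite index} in every adjacent vertex group. For the splitting $F_3\ast_{F_7}F_4$ from Theorem \ref{th1}, this is already consistent with the Euler-characteristic identity $\chi(F_7)=-6=3\chi(F_3)=2\chi(F_4)$, which would give respective indices $3$ and $2$.

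Given such a splitting, the final step is to invoke the principle --- available from \cite{Wu22} or deducible by a direct Bass-Serre argument --- that the fundamental group of a finite graph of finitely generated free groups with finite-index edge inclusions is virtually poly-free of length two. The mechanism is to pass to a torsion-free finite-index subgroup $G_0$ and produce a short exact sequence $1\to F\to G_0\to Q\to 1$ with $F$ finitely generated free and $Q$ virtually free, using a compatible system of normal finite-index subgroups of the vertex groups to kill the edge groups' images on the Bass-Serre tree. The principal obstacle I anticipate is this case-by-case verification of the finite-index edge condition across the full range of splittings furnished by \cite{Jan21,Jan22} and Theorem \ref{th1}; a secondary subtlety is the Euclidean case, where a generic splitting argument does not suffice and an explicit poly-free structure must be exhibited by hand.
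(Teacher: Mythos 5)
Your proposal rests on two claims, both of which fail.

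\textbf{First, the finite--index claim is false in the hyperbolic cases (which are most cases).} Matching Euler characteristics are necessary but far from sufficient for an inclusion of free groups to have finite index; an immersion of graphs that is not a covering map typically gives an infinite--index subgroup. In fact, for the splittings produced by Jankiewicz in \cite{Jan21,Jan22} (and for the splitting $F_3\ast_{F_7}F_4$ of $\mathrm{Art}_{2,3,2m}$ when $m>3$), the relevant edge group has \emph{infinite} index in at least one vertex group. The key structural property these splittings enjoy, established by Wise \cite{Wi02} and Jankiewicz, is thinness; and a thin amalgam cannot have a finite--index edge group in a vertex group with infinite edge group, since $C\cap gCg^{-1}$ would then have finite index in $C$, contradicting finiteness of path stabilizers. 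Only the Euclidean cases $(2,3,6),(2,4,4),(3,3,3)$ of Squier \cite{Sq87} have the finite--index structure you describe. So the ``case--by--case verification'' you defer would, in fact, fail.

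\textbf{Second, the invoked principle is false.} You claim that the fundamental group of a finite graph of finitely generated free groups with finite--index edge inclusions is virtually poly--free. The Baumslag--Solitar group $BS(m,n)$ is an HNN extension of $\mathbb{Z}$ over a subgroup of index $n$ on one side and $m$ on the other, yet Lemma \ref{lem1} shows $BS(m,n)$ is not virtually poly--free when $m\neq\pm n$. So even where the finite--index structure exists, your lemma would not give the conclusion. The compatibility of indices around cycles of the graph is a genuine obstruction, not a technicality.

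The paper's route is essentially orthogonal to yours: instead of aiming for finite--index edge inclusions in the group itself, one passes to the pure Artin group $\mathrm{PA}_{MNP}$ (kernel of the map to the Coxeter group) and shows its induced splitting is \emph{algebraically clean} --- each edge group is a \emph{free factor} of its vertex groups, which is as far from finite index as possible for nontrivial subgroups. This uses Lemma \ref{2.4}/\ref{lemma-imy-sep} (separation of the edge subgroup from the oppressive set) verified via explicit CAT(0) geometry of the presentation complex of the von Dyck group, and then Corollary \ref{cor1} (algebraically clean $\Rightarrow$ normally poly--free). You would need to replace the core of your argument with something of this flavor.
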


Note that  the  word ``virtual" in Theorem \ref{th3} cannot be dropped by the following theorem.

\begin{theorem}
\label{th2}Assume that $M,N,P$ are pairwise coprime integers. Then the commutator subgroup
of the Artin group $\mathrm{Art}_{MNP}$ is perfect. In particular, the Artin
group $\mathrm{Art}_{MNP}$ is not poly-free.
\end{theorem}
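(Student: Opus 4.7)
The plan is to compute $H_{1}(K;\mathbb{Z})$ for $K=[G,G]$ with $G=\mathrm{Art}_{MNP}$, via Fox calculus applied to the abelianization map, and show it vanishes. First I identify the abelianization. Pairwise coprimality forces at most one of $M,N,P$ to be even, so at least two are odd; the relation $\{x,y\}_{k}=\{y,x\}_{k}$ abelianizes trivially when $k$ is even and to $x=y$ when $k$ is odd, so at least two edges of the defining triangle identify their endpoints. Hence $a=b=c$ in $G^{\mathrm{ab}}$, giving a surjection $\phi\colon G\twoheadrightarrow\mathbb{Z}$ with $\phi(a)=\phi(b)=\phi(c)=1$, and $K=\ker\phi$.

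Next, set $[M]_{x}:=1+x+\cdots+x^{M-1}$, and let $t$ generate $G/K\cong\mathbb{Z}$, so $\mathbb{Z}[G/K]=\mathbb{Z}[t,t^{-1}]$. A direct Fox-derivative computation on $r_{1}=\{a,b\}_{M}\{b,a\}_{M}^{-1}$, pushed forward by $\phi$, yields
\[
\phi(\partial r_{1}/\partial a)=[M]_{-t},\qquad \phi(\partial r_{1}/\partial b)=-[M]_{-t},\qquad \phi(\partial r_{1}/\partial c)=0,
\]
and analogously for $r_{2},r_{3}$. The complex $\mathbb{Z}[t,t^{-1}]\otimes_{\mathbb{Z}[G]}C_{\ast}(\widetilde{X})$ of the universal cover of the presentation $2$-complex computes $H_{\ast}(K;\mathbb{Z})$; its $\partial_{1}$ sends each basis vector of $\mathbb{Z}[t,t^{-1}]^{3}$ to $t-1$, so $\ker\partial_{1}$ is free of rank $2$ with basis $v_{1}=(1,-1,0)$, $v_{2}=(0,1,-1)$. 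In this basis the Jacobian presents $H_{1}(K)$ as the cokernel of the map $\mathbb{Z}[t,t^{-1}]^{3}\to\mathbb{Z}[t,t^{-1}]^{2}$ with columns $([M]_{-t},0)$, $(0,[N]_{-t})$, $(-[P]_{-t},-[P]_{-t})$. Its zeroth Fitting ideal equals $\bigl([M]_{-t}[N]_{-t},\ [M]_{-t}[P]_{-t},\ [N]_{-t}[P]_{-t}\bigr)$, which is the unit ideal precisely when $[M]_{-t},[N]_{-t},[P]_{-t}$ are pairwise coprime in $\mathbb{Z}[t,t^{-1}]$; in that case $H_{1}(K)=0$ and $K$ is perfect.

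The core arithmetic step is: $\gcd(M,N)=1$ implies $\bigl([M]_{s},[N]_{s}\bigr)=\mathbb{Z}[s]$ (substituting $s=-t$ then transfers the result). Since $\mathbb{Z}[s]$ is Jacobson, it suffices to verify coprimality in $\mathbb{F}_{p}[s]$ for every prime $p$. When $p\nmid MN$, the polynomials $s^{M}-1$, $s^{N}-1$ are separable over $\overline{\mathbb{F}}_{p}$, and the roots of $[M]_{s}$, $[N]_{s}$ are the nontrivial $M$-th and $N$-th roots of unity respectively, which are disjoint by $\gcd(M,N)=1$. When $p\mid M$, write $M=p^{a}m'$ with $p\nmid m'$; Frobenius yields $s^{M}-1=(s^{m'}-1)^{p^{a}}$, and hence $[M]_{s}=(s-1)^{p^{a}-1}[m']_{s}^{p^{a}}$ in $\mathbb{F}_{p}[s]$. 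Since $[N]_{s}(1)=N\not\equiv 0\pmod{p}$ and $\gcd(m',N)=1$, the polynomials $[M]_{s}$, $[N]_{s}$ still share no irreducible factor.

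For the ``in particular'' assertion, a short induction on the length $n$ of a poly-free series $1=G_{0}\lhd G_{1}\lhd\cdots\lhd G_{n}=G$ shows that any perfect subgroup $H\leq G$ is trivial: $H/(H\cap G_{n-1})$ embeds in the free group $G/G_{n-1}$, is perfect as a quotient of $H$, and a perfect free group is trivial, forcing $H\leq G_{n-1}$, whence induction gives $H=1$. Pairwise coprimality forces at least two of $M,N,P$ to be $\geq 3$, so $G$ is non-abelian (the Coxeter quotient is non-abelian, its dihedral subgroup on those two generators being already so), and thus $K=[G,G]$ is a nontrivial perfect subgroup of $G$, contradicting poly-freeness. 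The main obstacle is the passage from coprimality of $[M]_{s},[N]_{s}$ over $\mathbb{Q}[s]$ (immediate from $[M]_{s}=\prod_{d\mid M,\,d>1}\Phi_{d}(s)$) to the integral statement in $\mathbb{Z}[s]$; this is exactly the content of the mod-$p$ analysis, the case $p\mid M$ needing the Frobenius trick.
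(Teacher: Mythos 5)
Your proof is correct, but it takes a genuinely different route from the paper's. The paper obtains an explicit presentation of $\ker f$ via the Reidemeister--Schreier method (Lemma~\ref{relator}), abelianizes, and then kills the generators $S_{i,b},S_{i,c}$ by a hands-on Euclidean-algorithm descent on the subscripts, using the degree-shifting action of $\mathbb{Z}=G/\ker f$ and the pairwise coprimality of $M,N,P$. You instead package the same homology computation as an Alexander-module calculation: $H_1(\ker f)$ is the cokernel of the $2\times 3$ Alexander matrix over $\mathbb{Z}[t^{\pm1}]$ whose entries are the $q$-integers $[M]_{-t},[N]_{-t},[P]_{-t}$ (your Fox-derivative computation checks out, and the identification of $\ker\partial_1$ with the rank-two free module on $e_a-e_b,e_b-e_c$ is correct), and you show the zeroth Fitting ideal is the unit ideal by isolating the number theory into a clean lemma: $\gcd(M,N)=1$ implies $([M]_s,[N]_s)=\mathbb{Z}[s]$, proved prime by prime with the Frobenius trick handling $p\mid M$. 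The two arguments are ultimately the same Euclidean algorithm in different clothing, but your version factors the arithmetic into a reusable lemma and avoids the paper's delicate relator bookkeeping; the paper's version has the side benefit of producing an explicit presentation of the commutator subgroup, which it reuses elsewhere (e.g.\ Corollary~\ref{nonfree}). Your derivation of the ``in particular'' clause by induction on the length of a poly-free series is also more careful than the paper's implicit appeal, since $[G,G]$ need not be finitely generated and local indicability alone does not directly apply; the induction (a perfect subgroup has trivial image in the free top quotient, hence descends) is the right argument, together with your observation that $G$ is non-abelian because two of the labels are $\geq 3$.
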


\begin{remark}
    In contrast to our theorem, Blasco-Garcia proved in \cite{BG21} that triangle Artin groups with even labeling indeed are poly-free.
\end{remark}

  The
splitting of Artin groups $\mathrm{Art}_{23M}$ (for even $M$) in Theorem \ref{th1} is proved in Section \ref{sec:split-23even} which is based on a presentation studied by Hanham \cite{Han02}. The proof of non-splitting of Artin
groups $\mathrm{Art}_{23M}$ (for odd $M$) in Theorem \ref{th1} is based on a
study of group action on trees. In fact, we prove in Section \ref{sec:split-23odd} that any isometric action of such an
Artin group on a simplicial tree must have a fixed point or an invariant
geodesic line or is close to an action on a Bass--Serre tree with finite vertex stabilizers (see Theorem \ref{thm-tree-action-clf}).  In order to prove Theorem \ref{th3},
we establish a criterion of poly-freeness for multiple HNN extensions. We
prove that an algebraically clean graph of free groups (i.e.~each edge subgroup
is a free factor of the corresponding vertex subgroup) is poly-free. Since  the Artin group $\mathrm{Art}_{23M}$ for $M$ is an odd
integer at least 7 does not split as a graph of free groups, our method will not work for these groups. In particular, it is still an open problem whether the Artin
group $\mathrm{Art}_{2,3,2k+1},k>2,$ is virtually poly-free.
Theorem \ref{th2} is proved  in the last section by writing down the presentation of the commutator subgroup
using the Reidemeister-Schreier method.  

\subsection*{Acknowledgements.} Wu is currently a member of LMNS and supported by  a starter grant at Fudan University. He thanks Georges Neaime for helpful discussions regarding poly-freeness of Artin groups, and  Jingyin Huang for pointing out the paper \cite{Jan22}. Ye is supported by NSFC (No. 11971389). After we finished the paper, we learned from Kasia Jankiewicz that she and Kevin Schreve \cite{JS23} have independently proved that algebraically clean graph of groups (here they assume that the graph is finite and the free group is of finite rank) are normally poly-free. In particular, they also have a proof for Corollary \ref{cor:vnpf}, which covers the poly-freeness of many triangle Artin groups. She further informed us that her student Greyson Meyer has independently obtained a proof of Theorem \ref{23even}, i.e.~Artin groups of the form $(2,3,2n)$, where $n\geq 3$, admit graph-of-free-group splittings. We thank the referee for suggesting numerous corrections and clarifications to an earlier draft of this paper.

\bigskip

\section{ Basic notations and facts}
We introduce some basic notations and facts that we will need in the next sections.
\subsection{Poly-free groups}

Recall  that a group $G$ is poly-free if there is a finite
subnormal sequence%
\begin{equation*}
1<G_{1}\trianglelefteq G_{2}\trianglelefteq \cdots \trianglelefteq G_{n}=G
\end{equation*}%
such that the quotient $G_{i}/G_{i+1}$ is free for each $i$. Here the rank of $%
G_{i}/G_{i+1}$ is not required to be finite. When each quotient $%
G_{i}/G_{i+1}$ is cyclic (or infinite cyclic), the group $G$ is called polycyclic (resp.
poly-$\mathbb{Z}$). The group $G$ is called normally poly-free if
additionally each $G_{i}$ is normal in $G$. The minimal such $n$ is called
the poly-free (resp. normally poly-free) length of $G$. Note that not every
poly-free group is normally poly-free.

\begin{example}
Let $n\geq 2$. For an irreducible invertible $n\times n$ integer matrix $A$, the semi-direct
product $\mathbb{Z}^{n}\rtimes _{A}\mathbb{Z}$ is poly-free, but not normally
poly-free.
\end{example}

\begin{proof}
Since $\mathbb{Z}^{n}$ is poly-free, the group $\mathbb{Z}^{n}\rtimes \mathbb{%
Z}$ is poly-free. But any non-trivial normal subgroup $H$ containing a direct summand $\mathbb{Z}^{n}$ is
the whole group $\mathbb{Z}^{n}$ which is not a free group when $n\geq 2$. This implies that $\mathbb{Z}^{n}\rtimes 
\mathbb{Z}$ does not have a subnormal series of normal subgroups with successive free quotients. Therefore, it cannot be
normally poly-free.
\end{proof}

Poly-free groups have many nice properties. For example, they are

\begin{enumerate}

\item torsion-free;

\item locally indicable;

\item having finite asymptotic dimension (one can prove this via induction
using \cite[Theorem 2.3]{DS06});

\item satisfying the Baum--Connes Conjecture with coefficients \cite[Remark 2]{Wu22}, and

\item satisfying the Farrell--Jones Conjecture if they are normally
poly-free \cite[Theorem A]{BKW21}.
\end{enumerate}

It is clear that the class of poly-free groups is closed under taking
subgroups and extensions. The following is also a
folklore (cf. Bestvina \cite{Be99}). Since we cannot find a proof, we
provide a detailed argument here.

\begin{lemma}
A poly-free group satisfies the Tits alternative, i.e.~any nontrivial
subgroup either is polycyclic or contains a nonabelian free subgroup.
\end{lemma}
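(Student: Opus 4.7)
The plan is to induct on the poly-free length $n$ of $G$. Since the class of poly-free groups is closed under taking subgroups (as noted in the paragraph just above the lemma), it suffices to establish the dichotomy ``polycyclic or contains a nonabelian free subgroup'' for the group $G$ itself; applying this to any nontrivial subgroup $H \le G$ (which is again poly-free, of length at most $n$) then yields the Tits alternative as stated.

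For the base case $n = 1$ the group $G$ is free, so it is trivial, infinite cyclic (both polycyclic), or of rank at least $2$ (hence contains $F_2$). For the inductive step I consider the short exact sequence
\[
1 \to G_{n-1} \to G \to Q \to 1,
\]
where $Q := G/G_{n-1}$ is free and $G_{n-1}$ is poly-free of length at most $n-1$. If $Q$ has rank $\ge 2$, I pick two free generators of some $F_2 \le Q$, lift them to $g_1, g_2 \in G$, and observe that any reduced word in $g_1, g_2$ which trivialises in $G$ also trivialises in $Q$ and is therefore empty; hence $\langle g_1, g_2 \rangle \cong F_2$. Otherwise $Q$ is trivial or infinite cyclic, and the inductive hypothesis applied to $G_{n-1}$ gives two cases: either $G_{n-1}$ already contains $F_2$, in which case so does $G$, or $G_{n-1}$ is polycyclic, in which case $G$ is an extension of a polycyclic group by a cyclic (or trivial) group, hence polycyclic by concatenating the two subnormal series with cyclic quotients.

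The main obstacle is really bookkeeping around the possibility that the free quotients $G_{i+1}/G_i$ have infinite rank, which prevents a direct appeal to statements about finitely generated free groups. This is absorbed cleanly by the trichotomy ``rank $0$ / rank $1$ / rank $\ge 2$'' on $Q$, which is meaningful for free groups of any cardinality. The conceptual content is concentrated in the lifting step in the case $Q \supseteq F_2$, and it uses nothing more than the universal property of free groups, so I expect no real difficulty beyond the organisational one.
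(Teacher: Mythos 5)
Your proof is correct and takes essentially the same approach as the paper: peel off the top free quotient, produce a copy of $F_2$ if that quotient is nonabelian, otherwise descend a level and conclude polycyclicity. You reorganize this as an explicit induction on poly-free length after first reducing to $H = G$ via closure of the class under subgroups (so the relevant quotient is always the full $G/G_{n-1}$, making the rank trichotomy clean), whereas the paper runs the same iteration directly on a fixed subgroup $H$ through successive images and kernels and invokes a homomorphic section, which exists since the free quotient is projective, in place of your lift-and-check argument.
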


\begin{proof}
Let $G$ be a poly-free group with a subnormal sequence%
\begin{equation*}
1<G_{1}\trianglelefteq G_{2}\trianglelefteq \cdots \trianglelefteq G_{n}=G
\end{equation*}%
such that the quotient $G_{i}/G_{i+1}$ is free for each $i.$ Suppose that $%
1\neq H<G$ is a subgroup. Consider the image $H_{1}=\mathrm{Im}%
(H\hookrightarrow G\rightarrow G/G_{n-1}),$ where the first homomorphism is
the inclusion and the second homomorphism is the natural quotient. Since $%
G/G_{n-1}$ is free, the image $H_{1}$ is free. If $H_{1}$ is non-abelian, we
choose a section of the epimorphism $H\rightarrow H_{1}$ to get a
non-abelian free subgroup of $H$. If $H_{1}$ is (nontrivial or trivial)
abelian, we consider the $\ker (H\rightarrow H_{1}),$ which is a subgroup of 
$G_{n-1}.$ Repeat the argument for $\ker (H\rightarrow H_{1})<G_{n-1}$,
instead of $H<G.$ After at most $n$ steps, the argument is stopped. If there
exists a non-abelian image in $G_{i}/G_{i-1}$ for some $i,$ the subgroup $H$
contains a non-abelian free subgroup. Otherwise, the subgroup $H$ is
polycyclic.
\end{proof}

We will also need the following fact.

\begin{lemma}
\label{2.6}
\cite[Lemma 2.6]{Wu22} Let $G=A\ast _{C}B$ be an amalgamated product of two
groups $A,B$ along subgroup $C.$ Suppose that $f:G=A\ast _{C}B\rightarrow Q$
is a group homomorphism such that $ f|_{C}$ is injective. If $\ker
f|_{A},\ker f|_{B}$ are free, then $\ker f$ is free. In particular, when $%
\ker f|_{A},\ker f|_{B}$ are free and $Q$ is (resp. normally) poly-free, the
group $G$ is (resp. normally) poly-free.
\end{lemma}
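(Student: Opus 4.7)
The plan is to apply Bass--Serre theory to the tree $T$ on which $G = A \ast_C B$ naturally acts, and to show that when we restrict the action to $N := \ker f$, all the edge stabilizers collapse to the trivial group. Once that is established, $N$ will act on $T$ with trivial edge stabilizers and free vertex stabilizers, which by the standard structure theorem forces $N$ to be a free product of free groups with a free group, and hence free.

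The crucial computation is the following. Vertex stabilizers of the $G$-action on $T$ are the conjugates $gAg^{-1}$ and $gBg^{-1}$, and edge stabilizers are the conjugates $gCg^{-1}$. I would first verify that $N \cap gCg^{-1} = \{1\}$ for every $g \in G$: if $gcg^{-1} \in N$, then $f(c) = f(g)^{-1} f(gcg^{-1}) f(g) = 1$, so $c \in C \cap \ker f = \{1\}$ by the hypothesis that $f|_C$ is injective. The same conjugation trick gives $N \cap gAg^{-1} = g(A \cap \ker f)g^{-1}$ and $N \cap gBg^{-1} = g(B \cap \ker f)g^{-1}$, both of which are free by assumption (and by Nielsen--Schreier for conjugate isomorphic copies).

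Now by Bass--Serre theory, a group acting on a tree with trivial edge stabilizers is the free product of a free group (arising from the quotient graph $N \backslash T$) with representatives of the conjugacy classes of vertex stabilizers. Since each vertex stabilizer is free and a free product of free groups is free, $N$ is free, proving the first assertion.

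For the ``in particular'' clause, given a (normally) poly-free series $1 = Q_0 \trianglelefteq Q_1 \trianglelefteq \cdots \trianglelefteq Q_n = Q$ with free quotients, I would pull back to $G_i := f^{-1}(Q_i)$; the successive quotient $G_i/G_{i-1}$ injects into $Q_i/Q_{i-1}$ and is therefore free, and in the normal case each $G_i$ is normal in $G$ as the preimage of a normal subgroup of $Q$. Prepending $1 \trianglelefteq N = G_0$ and using that $N$ is free by the first part yields the required (normally) poly-free series for $G$. I expect the only genuinely new step to be the edge-stabilizer computation above; once that is in hand, the rest is a routine combination of Bass--Serre structure and pullback of subnormal series.
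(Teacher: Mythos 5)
Your proof is correct and follows the standard route: restrict the $G$-action on the Bass--Serre tree of $A\ast_C B$ to $\ker f$, observe via the conjugation computation that edge stabilizers become trivial while vertex stabilizers remain free, apply the structure theorem to conclude $\ker f$ is free, and then pull back a (normally) poly-free series of $Q$ through $f$. This is essentially the argument in \cite{Wu22}, which is the source the paper cites for this lemma; no genuine gaps.
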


\subsection{Graphs, immersions and oppressive sets}
In this paper, a graph is a 1-dimensional CW-complex. A map $f:X_{1}\rightarrow
X_{2}$ between two graphs $X_{1},X_{2}$ is combinatorial if the image $f(v)$
of each vertex $v\in X_{1}$ is a vertex of $X_{2}$, and each open edge $%
[v_{1},v_{2}]$ with endpoints $v_{1},v_{2}$ in $X_{1}$ is mapped
homeomorphically onto an edge with endpoints $f(v_{1}),f(v_{2})$ in $X_{2}$. A
combinatorial map $f:X_{1}\rightarrow X_{2}$ is a combinatorial immersion,
if $f$ is locally injective, i.e.~for every vertex  $v\in X_{1}$ and
oriented edges $e_{1},e_{2}$  with terminal vertex $v$ such that $%
f(e_{1})=f(e_{2})$, we have $e_{1}=e_{2}$. 

\begin{lemma}
\label{stallings}\cite[Proposition 5.3]{sta} A combinatorial immersion $%
f:X_{1}\rightarrow X_{2}$ induces an injective fundamental group
homomorphism 
\begin{equation*}
f_{\ast }:\pi _{1}(X_{1},v)\hookrightarrow \pi _{1}(X_{1},f(v))
\end{equation*}%
for any vertex $v\in X_{1}.$
\end{lemma}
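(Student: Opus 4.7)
The plan is to prove injectivity by showing that an immersion preserves reduced edge paths, and then invoke the standard fact that a reduced edge loop is nullhomotopic only if it is the constant loop.

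First I would fix combinatorial language: represent any element of $\pi_1(X_1,v)$ by an edge path $p = e_1 e_2 \cdots e_n$ starting and ending at $v$, where each $e_i$ is an oriented edge and consecutive edges share the appropriate endpoint. Call such a path \emph{reduced} if it contains no subword of the form $e\bar{e}$ (no backtracking). It is classical that every edge loop is homotopic rel endpoints to a reduced edge loop via finitely many elementary cancellations, and that a reduced edge loop is nullhomotopic in a graph if and only if it is the constant loop (this is the standard fact that graphs are aspherical and their fundamental groups are free on a maximal tree complement).

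The key step, and where local injectivity enters, is the following claim: \emph{if $p$ is a reduced edge path in $X_1$, then $f(p) = f(e_1) f(e_2) \cdots f(e_n)$ is a reduced edge path in $X_2$.} Suppose for contradiction that $f(p)$ has a backtracking at position $i$, i.e. $f(e_{i+1}) = \overline{f(e_i)}$. Let $v_i$ be the terminal vertex of $e_i$, which is also the initial vertex of $e_{i+1}$. Consider the two oriented edges $\bar{e_i}$ and $e_{i+1}$, both with initial vertex $v_i$ (equivalently, their reverses have terminal vertex $v_i$). They satisfy $f(\bar{e_i}) = f(e_{i+1})$, so local injectivity of $f$ at $v_i$ forces $\bar{e_i} = e_{i+1}$, contradicting the assumption that $p$ is reduced.

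With this claim in hand, injectivity of $f_*$ is immediate. Given a loop $\gamma$ at $v$ with $f_*([\gamma]) = 1$, first reduce $\gamma$ to an equivalent reduced loop $\gamma'$ at $v$ by elementary cancellations inside $X_1$. By the claim, $f(\gamma')$ is a reduced loop in $X_2$ based at $f(v)$, and it still represents the trivial element of $\pi_1(X_2, f(v))$. Since a reduced loop in a graph is nullhomotopic only when it is the constant loop, $f(\gamma')$ is constant, so $\gamma'$ itself has length zero, whence $[\gamma] = 1$ in $\pi_1(X_1,v)$.

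The only genuine obstacle is the reducedness-preservation claim; everything else is standard combinatorial graph topology. I would therefore present the proof with that step as the main lemma and the rest as a short corollary. A minor expository choice is whether to phrase the argument via edge paths directly or via the universal cover (an immersion lifts to an embedding of universal covers, which gives injectivity instantly); the edge-path version is shorter and self-contained, so I would prefer it.
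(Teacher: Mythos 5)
The paper itself gives no proof of this lemma, citing it directly to Stallings, and your argument is the standard one found there: local injectivity forces an immersion to carry reduced edge paths to reduced edge paths, and a nonconstant reduced edge loop in a graph is never nullhomotopic. Your orientation bookkeeping (passing between the paper's terminal-vertex formulation of local injectivity and the initial-vertex version you need at $v_i$) is correct, so the argument is complete.
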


A combinatorial map $f:X_{1}\rightarrow X_{2}$ between (oriented) graphs is
a Stallings' folding if 

\begin{itemize}
\item[(i)] there exist distinct (closed) edges $e_{1},e_{2}$ of $X_{1}$
starting from a common vertex such that $X_{2}=X_{1}/e_{1}\sim e_2$%
, i.e.~$X_{2}$ is obtained from $X_{1}$ by identifying $e_{1}$ with $e_{2},$
and

\item[(ii)] the natural quotient map $X_{1}\rightarrow X_{1}/e_{1}\sim e_{2}$ is $f.$
\end{itemize}
When $X_{1}$ is a finite graph and  $f:X_{1}\rightarrow X_{2}$ is a
combinatorial map of graphs, there is a finite sequence of combinatorial
maps of graphs%
\begin{equation*}
X_{1}=\Gamma _{0}\rightarrow \Gamma _{1}\rightarrow \cdots \rightarrow
\Gamma _{n}\rightarrow X_{2}
\end{equation*}
such that $f$ is the composition of the immersion $\Gamma _{n}\rightarrow
X_{2}$ and a sequence of surjective graph maps $\Gamma _{i}\rightarrow
\Gamma _{i+1}$ (Stallings' foldings), for $i=0,1,2,...,n-1$. The sequence
of foldings is not unique, but the final immersion is unique (cf. \cite[p. 555]{sta}).

Let $(X,x_{0}),(Y,y_{0})$ be based graphs and $\rho :(Y,y_{0})\rightarrow
(X,x_{0})$ be a combinatorial immersion. A path in a graph is called simple if it is an embedding. The oppressive set $A_{\rho
}\subseteq \pi _{1}(X,x_{0})$ consists of all $g\in \pi _{1}(X,x_{0})$
representing by a cycle $\gamma $ in $X$ such that $\gamma =\rho (\mu
_{1})\cdot \rho (\mu _{2}),$ a concatenation, where $\mu _{1}$ is a
non-trivial simple non-closed path in $Y$ going from $y_{0}$ to some vertex $%
y_{1},$ and $\mu _{2}$ is either trivial or a simple non-closed path in $Y$
going from some vertex $y_{2}$ to $y_{0},$ with $y_{1}\neq y_{2}\neq y_{0}.$
Note that $\rho (y_{1})=\rho (y_{2}).$ If $f:H\rightarrow G$ is an injection
of free groups induced by an explicit graph immersion, we also denote by $A_{f}$ the
oppressive set.

\begin{lemma}
\label{2.4}\cite[Proposition 2.4 (4)]{Jan22} Suppose that the combinatorial immersion $\rho
:(Y,y_{0})\rightarrow (X,x_{0})$ induces an injection $\rho :\pi
_{1}(Y,y_{0})\hookrightarrow \pi _{1}(X,x_{0}).$ Let $\phi :\pi
_{1}(X,x_{0})\rightarrow \bar{G}$ be a group homomorphism that separates $%
\pi _{1}(Y,y_{0})$ from $A_{\rho }$, i.e.~$\phi(\pi _{1}(Y,y_{0})) \cap \phi(A_{\rho }) = \emptyset $. Then $\pi _{1}(Y,y_{0})\cap \ker \phi $
is a free factor in $\ker \phi .$
\end{lemma}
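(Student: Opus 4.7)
The plan is to rephrase the statement in covering-space language and then show that the lifted map is an embedding onto a subgraph. First set $K:=\ker\phi$ and $H:=\rho_\ast(\pi_1(Y,y_0))$. Let $p_X:\tilde X\to X$ be the (based) covering corresponding to the normal subgroup $K\trianglelefteq \pi_1(X,x_0)$, and $p_Y:Y'\to Y$ the (based) covering of $Y$ corresponding to the subgroup $\rho_\ast^{-1}(K)\subseteq \pi_1(Y,y_0)$. Since $\rho_\ast(\rho_\ast^{-1}(K))\subseteq K$, the map $\rho$ lifts canonically to a combinatorial immersion $\tilde\rho:(Y',y_0')\to(\tilde X,\tilde x_0)$, and under the natural identifications $\pi_1(Y',y_0')\cong \pi_1(Y,y_0)\cap K$ and $\pi_1(\tilde X,\tilde x_0)\cong K$, the conclusion reduces to showing that the map $\pi_1(Y',y_0')\hookrightarrow \pi_1(\tilde X,\tilde x_0)$ induced by $\tilde\rho$ is a free-factor inclusion.

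Second, I invoke the elementary graph fact: if $A$ is a connected subgraph of a connected graph $B$, then $\pi_1(A)$ is a free factor of $\pi_1(B)$ (extend a spanning tree of $A$ to one of $B$, and the non-tree edges of $A$ form part of a basis of $\pi_1(B)$). Hence it suffices to show $\tilde\rho$ is an embedding of $Y'$ onto a subgraph of $\tilde X$. Since $\tilde\rho$ is a combinatorial immersion, vertex-injectivity alone already upgrades to an embedding: two edges of $Y'$ with the same image in $\tilde X$ must share ordered endpoints by vertex-injectivity, and must then coincide by the immersion property. The problem thus reduces to verifying that $\tilde\rho$ is injective on vertices.

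I proceed by contradiction. Suppose there are distinct vertices $y_1',y_2'\in Y'$ with $\tilde\rho(y_1')=\tilde\rho(y_2')$, and choose such a pair together with paths $\alpha,\beta$ in $Y'$ from $y_0'$ to $y_1',y_2'$ minimizing the combinatorial length $|\alpha|+|\beta|$. Write $\mu_1:=p_Y(\alpha)$ and $\mu_2:=p_Y(\beta)$, viewed as walks in $Y$ from $y_0$ to $y_i:=p_Y(y_i')$. Then $\tilde\rho(\alpha)\cdot\overline{\tilde\rho(\beta)}$ is a loop in $\tilde X$ based at $\tilde x_0$, and its $p_X$-image $\rho(\mu_1)\cdot\overline{\rho(\mu_2)}$ is a loop in $X$ representing an element $g\in\pi_1(X,x_0)\cap K$, so $\phi(g)=1\in\phi(H)$. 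That $y_1\neq y_2$ in $Y$ is automatic: if $y_1=y_2$, then $\mu_1\overline{\mu_2}$ is a loop in $Y$ whose class in $\pi_1(Y,y_0)$ lies in $\rho_\ast^{-1}(K)$, so its lift to $Y'$ based at $y_0'$ closes up; the unique such lift is $\alpha$ followed by the $y_1'$-lift of $\overline{\mu_2}$, and uniqueness of lifts then forces $y_1'=y_2'$, a contradiction.

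The main obstacle is the remaining bookkeeping: extracting from the minimal witness $(\alpha,\beta)$ that $\mu_1,\mu_2$ are \emph{simple non-closed} paths in $Y$ with $y_i\neq y_0$, placing $g$ in $A_\rho$. One argues that any repeated vertex inside $\mu_i$, or any closure $y_i=y_0$, permits an excision (performed upstairs in $Y'$ using unique path-lifting for the coverings $p_X$ and $p_Y$) producing a strictly shorter pair still witnessing $\tilde\rho(y_1')=\tilde\rho(y_2')$, contradicting minimality; this is the delicate step, since simple paths in $Y'$ need not project to simple paths in $Y$ in general, and the argument must leverage the coincidence $\tilde\rho(y_1')=\tilde\rho(y_2')$ to control what happens to the basepoints after excision. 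Once this is handled, $g\in A_\rho\cap K$ gives $1=\phi(g)\in\phi(A_\rho)\cap\phi(H)$, contradicting the separation hypothesis and completing the proof.
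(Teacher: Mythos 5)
Your covering-space reformulation is the right one, and your reduction is sound: passing to the covers $\tilde X\to X$ (for $\ker\phi$) and $Y'\to Y$ (for $\rho_*^{-1}(\ker\phi)$), observing that the lift $\tilde\rho$ is still an immersion, and noting that an immersion which is injective on vertices is an embedding, after which the free-factor conclusion is immediate from the fact that the fundamental group of a connected subgraph is a free factor. Your verification that $y_1\neq y_2$ is also correct.

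The gap is precisely in the step you flag as delicate, and the excision-by-minimality strategy you sketch would not close it. When you excise a subloop of $\mu_1=p_Y(\alpha)$ and lift the shortened path back to $Y'$ from $y_0'$, unique path lifting does not return you to $y_1'$: the lift of $\mu_1|_{[t,n]}$ starting from $\alpha(s)$ ends at some vertex over $y_1$, but nothing forces it to be $y_1'$, nor forces its $\tilde\rho$-image to coincide with $\tilde\rho(y_2')$. So after excision you no longer have a witness at all, and the minimality argument breaks. The underlying difficulty is real: simple paths in $Y'$ need not project to simple paths in $Y$, and replacing $\alpha$ by a different lift changes the endpoint.

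The correct resolution is simpler and sidesteps the excision entirely: you do not need the projections $\mu_1,\mu_2$ of your chosen paths to be simple. Having established $y_1\neq y_2$ (and, after swapping if necessary, $y_1\neq y_0$), choose \emph{any} simple path $\nu_1$ from $y_0$ to $y_1$ in $Y$ and, if $y_2\neq y_0$, any simple path $\nu_2'$ from $y_0$ to $y_2$ (take $\nu_2'$ trivial if $y_2=y_0$). Set $g':=[\rho(\nu_1)\,\overline{\rho(\nu_2')}]\in A_\rho$. Then, writing $g:=[\rho(\mu_1)\,\overline{\rho(\mu_2)}]\in K$ for your original loop, one has the identity
\[
g' \;=\; \bigl[\rho(\nu_1\,\overline{\mu_1})\bigr]\cdot g\cdot\bigl[\rho(\mu_2\,\overline{\nu_2'})\bigr],
\]
and the two outer factors lie in $\rho_*\pi_1(Y,y_0)$ because $\nu_1\overline{\mu_1}$ and $\mu_2\overline{\nu_2'}$ are \emph{loops in $Y$ at $y_0$} (both paths in each pair share both endpoints). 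Applying $\phi$ kills the middle factor and gives $\phi(g')\in\phi\bigl(\rho_*\pi_1(Y,y_0)\bigr)$, so $\phi(g')\in\phi(A_\rho)\cap\phi(\pi_1(Y,y_0))$, contradicting the separation hypothesis. (Equivalently, since $K=\ker\phi$ is normal, $\rho_*(h_1)\,g\,\rho_*(h_2)\in K\rho_*\pi_1(Y)$ for any $h_1,h_2$.) This closes the argument without any appeal to minimality or excision, and the rest of your proof then goes through as written.
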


We will need the following lemma from \cite[Lemma 2.5]{Jan22}.

\begin{lemma}
\label{lemma-imy-sep}
Let $\rho: (Y,y_0) \rightarrow (X,x_0)$ be a combinatorial immersion of graphs with $x_0$ the unique vertex of $X$.  Let $Y_{e}, X_{e}$ be 2-complexes with 1-skeletons $Y_{e}^{(1)}=Y, X_{e}^{(1)}=X$ and let $\rho_e: Y_e \rightarrow X_e$ a map extending $\rho$.  Let  $\phi: \pi_1(X,x_0) \rightarrow \pi_1(X_{e},x_0)$ be the natural quotient and suppose that $\phi(\pi_1(Y,y_0)) = \rho_{e*} (\pi_1(Y_{e},y_0))$. If the lift to the universal covers  $\widetilde{\rho_e}:  \widetilde{Y_e} \rightarrow \widetilde{X_e}$ of $\rho_e$ is an embedding, then $\phi$ separates $\pi_1(Y,y_0)$ from $A_{\rho}$.
\end{lemma}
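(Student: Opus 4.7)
The plan is to argue by contradiction. Suppose some $g \in A_\rho$ satisfies $\phi(g) \in \phi(\pi_1(Y,y_0)) = \rho_{e*}(\pi_1(Y_e,y_0)) =: H$, and write $g = [\gamma]$ with $\gamma = \rho(\mu_1)\cdot\rho(\mu_2)$ as in the definition of the oppressive set. Choose a lift $\tilde y_0 \in \widetilde{Y_e}$ of $y_0$ and set $\tilde x_0 := \widetilde{\rho_e}(\tilde y_0)$, a lift of $x_0$. Since $\widetilde{\rho_e}$ is an embedding and $\widetilde{Y_e}$ is connected, the image $\widetilde{\rho_e}(\widetilde{Y_e})$ is a connected subset of $\widetilde{X_e}$, and standard covering-space theory identifies $H$ with its setwise stabilizer in the deck group $\pi_1(X_e,x_0)$ of $\widetilde{X_e}\to X_e$.

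The key step is to lift $\gamma$ to $\widetilde{X_e}$ starting at $\tilde x_0$. Under the identification of the deck group with $\pi_1(X_e,x_0)$, the endpoint of this lift is $\phi(g)\cdot \tilde x_0$, and by hypothesis this lies in $\widetilde{\rho_e}(\widetilde{Y_e})$; write it as $\widetilde{\rho_e}(\tilde y_0')$ for some lift $\tilde y_0'$ of $y_0$. Now split the lift of $\gamma$ into the lifts of $\rho(\mu_1)$ and $\rho(\mu_2)$ in turn. Lifting $\mu_1$ inside $\widetilde{Y_e}$ starting at $\tilde y_0$ and then applying $\widetilde{\rho_e}$ produces the lift of $\rho(\mu_1)$ in $\widetilde{X_e}$ starting at $\tilde x_0$; denote its endpoint by $\widetilde{\rho_e}(\tilde y_1)$, where $\tilde y_1 \in \widetilde{Y_e}$ is the corresponding lift of $y_1$. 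The remainder of the lift of $\gamma$ is therefore the lift of $\rho(\mu_2)$ that runs from $\widetilde{\rho_e}(\tilde y_1)$ to $\widetilde{\rho_e}(\tilde y_0')$.

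It remains to derive a contradiction from this. If $\mu_2$ is trivial, then $\widetilde{\rho_e}(\tilde y_1) = \widetilde{\rho_e}(\tilde y_0')$ already, and injectivity of $\widetilde{\rho_e}$ gives $\tilde y_1 = \tilde y_0'$, hence $y_1 = y_0$, contradicting that $\mu_1$ is non-closed. If instead $\mu_2$ is a non-trivial simple non-closed path from $y_2$ to $y_0$, lift it inside $\widetilde{Y_e}$ to end at $\tilde y_0'$, obtaining $\tilde\mu_2$ starting at some lift $\tilde y_2$ of $y_2$. Then $\widetilde{\rho_e}(\tilde\mu_2)$ is a lift of $\rho(\mu_2)$ in $\widetilde{X_e}$ ending at $\widetilde{\rho_e}(\tilde y_0')$, so by uniqueness of path lifting it must coincide with the lift starting at $\widetilde{\rho_e}(\tilde y_1)$; this forces $\widetilde{\rho_e}(\tilde y_1) = \widetilde{\rho_e}(\tilde y_2)$, and injectivity of $\widetilde{\rho_e}$ yields $\tilde y_1 = \tilde y_2$, hence $y_1 = y_2$, contradicting $y_1 \neq y_2$. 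The main (mild) obstacle is the initial identification of $H$ with the setwise stabilizer of the embedded copy $\widetilde{\rho_e}(\widetilde{Y_e})$, which is precisely where the assumption that $\widetilde{\rho_e}$ is a genuine embedding — rather than merely a lift — is used; once this is in hand, the rest reduces to careful bookkeeping with unique path lifts.
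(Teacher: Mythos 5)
Your argument is correct, and it is the natural universal-cover proof one would expect for this lemma (which the present paper cites from \cite{Jan22} without reproducing the proof). One remark on where the embedding hypothesis actually enters: the step producing a lift $\tilde y_0'$ of $y_0$ with $\widetilde{\rho_e}(\tilde y_0')=\phi(g)\tilde x_0$ follows purely from the $\rho_{e*}$-equivariance of $\widetilde{\rho_e}$ — namely $\phi(g)\tilde x_0 = \rho_{e*}(\sigma)\widetilde{\rho_e}(\tilde y_0)=\widetilde{\rho_e}(\sigma\tilde y_0)$ for suitable $\sigma\in\pi_1(Y_e,y_0)$ — and only uses the containment $H\subseteq\mathrm{Stab}\bigl(\widetilde{\rho_e}(\widetilde{Y_e})\bigr)$, not the reverse inclusion. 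The reverse inclusion (and hence your full ``setwise stabilizer equals $H$'' claim) need not hold in general, since a deck transformation can carry $\tilde x_0$ into the $\widetilde{\rho_e}$-image of a lift of a \emph{different} vertex of $Y$; fortunately your proof never relies on it. The embedding hypothesis (injectivity of $\widetilde{\rho_e}$) is genuinely needed only in the concluding step of each case, to pass from equality of $\widetilde{\rho_e}$-images to $\tilde y_1=\tilde y_0'$ (respectively $\tilde y_1=\tilde y_2$), so the attribution in your final paragraph should be shifted there.
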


\subsection{Reidemeister--Schreier rewriting procedure}

Let $G=\langle a_{i},i\in I|r_{i},i\in J\rangle $ be a presentation of a
group, where $I,J$ are two index sets.  For a subgroup $H\leq G$,  a system $R$
of words in the generators $a_{i},i\in I$, is called a Schreier system for $G
$ modulo $H$ if 

\begin{itemize}
\item[(i)] every right coset of $H$ in $G$ contains exactly one word of $R$
(i.e.~$R$ forms a system of right coset representatives); 

\item[(ii)] for each word in $R$ any initial segment is also in $R$ (i.e.~initial segments of right coset representatives in $R$ are again right coset
representatives). 

\item[(iii)] the empty word $\emptyset \in R.$
\end{itemize}

For any $g\in G,$ let $\bar{g}$ be the unique element in $R$ such that $Hg=H%
\bar{g}.$ For each $K\in R,a=a_{i},$ let $s_{K,a}=Ka\overline{Ka}^{-1}\in H.$
For more details on the Schreier system, see \cite[Section 2.3]{mks}. The
following Reidemeister theorem gives a presentation of the subgroup $H$.

\begin{theorem}
\label{schreier}
\cite[Corollary 2.7.2 + Theorem 2.8]{mks} The subgroup $H$ has a
presentation%
\begin{eqnarray*}
\langle s_{K,a},K &\in &R,a\in \{a_{i},i\in I\}\mid s_{K,a}=1,K\in R,a\in
\{a_{i},i\in I\},\text{if }Ka\equiv \overline{Ka}, \\
\tau (Kr_{i}K^{-1})& =&1,i\in J,K\in R\rangle ,
\end{eqnarray*}%
where $Ka\equiv \overline{Ka}$ means that the two words are equivalent in the
free group $F(\{a_{i},i\in I\})$, and $\tau \ $is the
Reidemeister--Schreier rewriting function defined as follows%
\begin{eqnarray*}
\tau  &:&F(\{a_{i},i\in I\})\rightarrow F(\{s_{K,a},K\in R,a\in \{a_{i},i\in
I\}\}), \\
a_{i_{1}}^{\varepsilon _{1}}a_{i_{2}}^{\varepsilon _{2}}\cdots
a_{i_{m}}^{\varepsilon _{m}} &\mapsto &s_{K_{i_{1}},a_{i_{1}}}^{\varepsilon
_{1}}s_{K_{i_{2}},a_{i_{2}}}^{\varepsilon _{2}}\cdots
s_{K_{i_{m}},a_{i_{m}}}^{\varepsilon _{m}}
\end{eqnarray*}
with%
\begin{equation*}
K_{i_{j}}=\left\{ 
\begin{array}{c}
\overline{a_{i_{1}}^{\varepsilon _{1}}a_{i_{2}}^{\varepsilon _{2}}\cdots
a_{i_{j-1}}^{\varepsilon _{j-1}}},\text{ if }\varepsilon
_{j}=1, \\ 
\overline{a_{i_{1}}^{\varepsilon _{1}}a_{i_{2}}^{\varepsilon _{2}}\cdots
a_{i_{j}}^{\varepsilon _{j}}},\text{ if }\varepsilon _{j}=-1.%
\end{array}%
\right. 
\end{equation*}
\end{theorem}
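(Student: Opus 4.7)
The plan is to prove the Reidemeister--Schreier theorem using the topological interpretation via covering spaces of the presentation $2$-complex, which makes the rewriting process transparent; the purely combinatorial proof is essentially the same argument translated into words.

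First, I would build the presentation $2$-complex $X_G$ for $G$: take one vertex $v_0$, an oriented edge for each generator $a_i$, and a $2$-cell for each relator $r_j$ attached along the word $r_j$. Then $\pi_1(X_G, v_0) = G$. The subgroup $H \leq G$ corresponds to a covering $p : \widetilde{X}_G \to X_G$ with $\pi_1(\widetilde X_G, \tilde v_0) = H$, whose vertex set is in bijection with the right cosets $H\backslash G$, which by assumption (i) is in bijection with the Schreier system $R$. Condition (ii) on initial segments means that the edges read off by the reduced words in $R$ form a spanning tree $T$ of the $1$-skeleton of $\widetilde X_G$ rooted at $\tilde v_0$.

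Second, I would read off the generators of $H$ from the covering. By standard covering space theory, $\pi_1(\widetilde X_G, \tilde v_0)$ is free of rank equal to the number of $1$-cells of $\widetilde X_G$ outside $T$, and the free generator associated to an oriented edge $\tilde e$ not in $T$ is the loop that traverses the unique $T$-path from $\tilde v_0$ to the initial vertex of $\tilde e$, then $\tilde e$, then the unique $T$-path back to $\tilde v_0$. Projecting down to $X_G$ gives exactly the word $K a\,\overline{Ka}^{-1} = s_{K,a}$, for $K\in R$, $a = a_i$. The edges lying in $T$ are precisely those with $Ka \equiv \overline{Ka}$ in the free group, and the corresponding generators must be set to the trivial element. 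This produces the generators and the first family of relations in the statement.

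Third, I would obtain the remaining relations via Van Kampen. Each $2$-cell of $X_G$ attached along $r_i$ lifts to $[G:H]$ distinct $2$-cells in $\widetilde X_G$, one based at each vertex $K \in R$; the attaching loop of the lift based at $K$ projects to $r_i$ traversed starting at $K$, giving the based loop $K r_i K^{-1}$ at $\tilde v_0$. Expressing this loop in the free generators $s_{K,a}$ using the spanning tree $T$ yields precisely the word $\tau(Kr_iK^{-1})$: each letter $a_{i_j}^{\varepsilon_j}$ in $r_i$, read while sitting at the coset $K_{i_j}$, contributes the generator $s_{K_{i_j}, a_{i_j}}^{\varepsilon_j}$, with the case split on the sign $\varepsilon_j = \pm 1$ accounting for whether the edge is traversed forward out of $K_{i_j}$ or backward into it. Van Kampen's theorem applied to $\widetilde X_G$ then gives the presentation in the statement.

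The main technical obstacle is the bookkeeping in the third step, namely verifying that the explicit formula for $\tau$, with its case distinction on $\varepsilon_j$, correctly records the sequence of non-tree edges traversed by the lift of $r_i$ starting at the vertex $K$. This amounts to checking by induction on $j$ that after reading the prefix $a_{i_1}^{\varepsilon_1} \cdots a_{i_{j-1}}^{\varepsilon_{j-1}}$ one sits at the coset $\overline{K a_{i_1}^{\varepsilon_1} \cdots a_{i_{j-1}}^{\varepsilon_{j-1}}}$, and that a backward edge is detected by $\varepsilon_j = -1$, so that the endpoint must be recorded as $\overline{K a_{i_1}^{\varepsilon_1} \cdots a_{i_j}^{\varepsilon_j}}$ instead.
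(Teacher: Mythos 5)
Your topological argument via covering spaces of the presentation $2$-complex is correct, and it is a genuinely different route from what the paper relies on: the paper does not prove this theorem but simply cites Magnus--Karrass--Solitar, whose argument is purely combinatorial. In that reference one first shows, by the Nielsen--Schreier/Schreier-transversal machinery, that the nontrivial $s_{K,a}$ freely generate the preimage of $H$ in the free group $F(\{a_i\})$, and then one carries out a careful sequence of Tietze transformations to show that the elements $\tau(Kr_iK^{-1})$ normally generate the intersection of that preimage with the normal closure of the $r_i$. Your proof replaces all of this word-by-word bookkeeping with the standard covering-space dictionary: Schreier condition (ii) becomes ``the paths spelled by $R$ form a spanning tree $T$,'' the generators $s_{K,a}$ become the non-tree edges read through $T$, the first family of relations kills precisely the tree edges (those with $Ka\equiv\overline{Ka}$), and the relators $\tau(Kr_iK^{-1})$ arise as the attaching words of the lifted $2$-cells, with Van Kampen finishing the proof. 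What your route buys is conceptual transparency --- $\tau$ is literally ``record the non-tree edges you cross, with sign'' --- at the cost of assuming covering space theory and Van Kampen as black boxes; the cited combinatorial proof is heavier on bookkeeping but is entirely self-contained within free-group theory. Your handling of the case split on $\varepsilon_j=\pm1$ in the third step, recording the source vertex of the edge whether it is traversed forward or backward, is exactly the crux and is correctly explained.
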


Note that $\tau (g_{1}g_{2})=\tau (g_{1})\tau (g_{2})$ for any $%
g_{1},g_{2}\in H$ and $\tau (Ka\overline{Ka}^{-1})=s_{K,a}$ (cf. \cite[Theorem 2.6 (7) and Corollary 2.7.2]{mks}).

\begin{lemma}
\label{shift}Suppose that $H$ is normal in a group $G,$ and $R$ is a
Schreier system for $G$ modulo $H$. For any $r,K\in R,a\in \{a_{i},i\in I\},$
we have $rs_{K,a}r^{-1}=s_{rK,a}$ as long as $rK,r\overline{Ka}\in R.$
\end{lemma}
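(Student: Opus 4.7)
The goal is to unpack both sides using the definition $s_{K,a} = Ka\,\overline{Ka}^{-1}$ and reduce the claim to an equality of coset representatives. Concretely, I would write
\[
r s_{K,a} r^{-1} = rKa\,\overline{Ka}^{-1}r^{-1}, \qquad s_{rK,a} = (rK)a\,\overline{(rK)a}^{-1},
\]
and observe that, after cancelling the common prefix $rKa$, the desired identity is equivalent to
\[
r\,\overline{Ka} \;=\; \overline{(rK)a}
\]
as words in $F(\{a_i\})$, i.e.\ as elements of $R$.

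The plan is then to prove this latter equality by comparing cosets. By assumption $r\,\overline{Ka} \in R$, so it suffices to show that $r\,\overline{Ka}$ lies in the same right $H$-coset as $(rK)a$, and then invoke uniqueness of coset representatives in the Schreier system. To compare cosets I would use normality of $H$: since $\overline{Ka}$ represents the coset $HKa$, I have $H\,\overline{Ka} = HKa$, and multiplying on the left by $r$ together with the fact that $rH = Hr$ gives
\[
H\,r\,\overline{Ka} = rH\,\overline{Ka} = rHKa = H\,rKa = H\,(rK)a.
\]
Thus $r\,\overline{Ka}$ and $(rK)a$ lie in the same right coset, so by property (i) of the Schreier system together with $r\,\overline{Ka}\in R$, we conclude $\overline{(rK)a} = r\,\overline{Ka}$, which finishes the proof.

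There is no serious obstacle here: the only delicate point is keeping track of which hypotheses are needed. The hypothesis $rK \in R$ is used implicitly to make sense of $s_{rK,a}$ as an element arising from a Schreier generator in the presentation of $H$, while the hypothesis $r\,\overline{Ka} \in R$ is the essential one, since it is precisely what allows the uniqueness-of-representatives step to identify $r\,\overline{Ka}$ with $\overline{(rK)a}$. Normality of $H$ enters exactly once, in the step $rH = Hr$ above.
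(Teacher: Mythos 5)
Your proof is correct and follows essentially the same route as the paper's: unpack $s_{K,a}$ and $s_{rK,a}$ by definition, reduce the claim to the identity $r\,\overline{Ka}=\overline{rKa}$, then verify that $r\,\overline{Ka}$ and $rKa$ lie in the same right $H$-coset using normality of $H$, and invoke uniqueness of Schreier representatives together with the hypothesis $r\,\overline{Ka}\in R$. The only difference is expository — you explicitly spell out the cancellation and the role of each hypothesis, which the paper leaves implicit.
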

\begin{remark}
    Here the equation $rs_{K,a}r^{-1}=s_{rK,a}$ means the two sides are equal as elements in $G$.
\end{remark}
\begin{proof}
Note that $HKa=H\overline{Ka},$ and $rHKa=H(rKa)=rH\overline{Ka}=Hr\overline{%
Ka}.$ If $r\overline{Ka}\in R,$ we have $r\overline{Ka}=\overline{rKa}$.
Therefore, $s_{rK,a}=rKa\overline{rKa}^{-1}=rKa\overline{Ka}%
^{-1}r^{-1}=rs_{K,a}r^{-1}.$
\end{proof}

\section{Splitting of the Artin group $(2,3,2m)$}\label{sec:split-23even}
In this section, we will prove that the triangle Artin group $\mathrm{Art}%
_{2,3,2m}$ splits as an amalgamated product of the form $F_{3}\ast _{F_{7}}F_{4}$ when $m\geq 3$.  The proof is based on a
presentation of $\mathrm{Art}_{2,3,2m}$ obtained by Hanham \cite[p.41-42]%
{Han02}, who actually proves that the corresponding presentation complex is 
CAT(0).

Let $$\mathrm{Art}_{2,3,2m}=\langle
a,b,c:ac=ca,bcb=cbc,(ab)^{m}=(ba)^{m}\rangle $$ be the Artin group of type $%
(2,3,2m),m\geq 3.$ Let 
\begin{equation*}
H=\langle b,c,x,y,\alpha ,\delta :\alpha =xc,\alpha =bx,y=bc,yb=cy,\delta
b=c\delta ,\delta =\alpha x^{m-2}\alpha \rangle .
\end{equation*}

\begin{lemma}
 \cite[p.41-42]{Han02} The group $\mathrm{Art}_{2,3,2m}$ is isomorphic to $H$.
\end{lemma}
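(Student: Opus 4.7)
The plan is to establish the isomorphism by mutually inverse Tietze transformations between the two presentations. Starting from $\mathrm{Art}_{2,3,2m}$, I introduce the new generators $y:=bc$, $x:=c^{-1}b^{-1}a$, $\alpha:=bx$, and $\delta:=\alpha x^{m-2}\alpha$, and eliminate $a$ via $a=bcx$. Under this substitution the commuting relation $ac=ca$ reads $bcxc=cbcx$, which simplifies to $xc=bx$ once I use the braid relation $bcb=cbc$; this gives $\alpha=bx=xc$, and therefore $\delta=(bx)x^{m-2}(xc)=bx^{m}c$. The braid relation itself becomes $yb=cy$ with $y=bc$, and the long Artin relation $(ab)^{m}=(ba)^{m}$ becomes $(bcxb)^{m}=(b^{2}cx)^{m}$, which must be shown to match $\delta b=c\delta$, namely $bx^{m}cb=cbx^{m}c$.

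For the reverse direction I would first eliminate $\alpha,y,\delta$ from $H$ via their defining relations to obtain the reduced presentation
\[
H\cong\langle b,c,x\mid bx=xc,\ bcb=cbc,\ bx^{m}cb=cbx^{m}c\rangle,
\]
and then add the Tietze generator $a:=bcx$. The first relation converts (using $bcb=cbc$) to $ac=ca$, and the third converts to $(ab)^{m}=(ba)^{m}$, recovering the Artin presentation. Since the two procedures are manifestly inverse to one another, this yields $\mathrm{Art}_{2,3,2m}\cong H$.

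The essential technical step, and the main obstacle, is the equivalence
\[
(bcxb)^{m}=(b^{2}cx)^{m}\ \Longleftrightarrow\ bx^{m}cb=cbx^{m}c
\]
modulo $bx=xc$ and $bcb=cbc$. I would attack this by induction on $m$, using the immediate consequence $b^{k}x=xc^{k}$ of $bx=xc$ to gather the $m$ copies of $x$ inside the interleaved word $(bcxb)^{m}$ into a single central block, leaving a prefix and suffix in $b,c$ which the braid relation $bcb=cbc$ reconciles; applying the same reduction to $(b^{2}cx)^{m}$ produces the other side. This is the content of Hanham's computation, and once it is established the Tietze argument above closes immediately.
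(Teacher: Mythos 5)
Your Tietze strategy is the right idea and amounts to the same argument as the paper (which presents it as explicit mutually inverse homomorphisms $\phi$ and $\psi$). However, there are two problems, one of which is fatal to the proposal as written.

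First, and less importantly, you leave the key step unproved: you openly defer the equivalence $(bcxb)^m=(b^2cx)^m \Leftrightarrow bx^m cb = cbx^m c$ to "the content of Hanham's computation" and a sketched induction. That step is the entire content of the lemma, so the proposal is incomplete regardless.

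Second, and more seriously, your choice of substitution is the wrong one, and the induction you sketch does not work for it. You set $x := c^{-1}b^{-1}a$, i.e.\ $a=bcx$. The paper's isomorphism instead corresponds to $a = b^{-1}c^{-1}xc$ (equivalently $x=cbac^{-1}$), which is crucial because it makes the long relator \emph{telescope}: with $a=b^{-1}c^{-1}xc$ one has $ab = b^{-1}c^{-1}\,x\,(cb)$ and $ba = c^{-1}\,x\,c$, so the inner blocks $(cb)(b^{-1}c^{-1})$ cancel and
\[
(ab)^m = b^{-1}c^{-1}\,x^m\,cb, \qquad (ba)^m = c^{-1}\,x^m\,c,
\]
whence $(ab)^m=(ba)^m$ rearranges directly (using $bcb=cbc$ once) to $bx^m cb = cbx^m c$. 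With your $a=bcx$, one gets $ab=bcxb$ and $ba=b^2cx$, and the words $(bcxb)^m$, $(b^2cx)^m$ do \emph{not} telescope. The identity $b^k x=xc^k$ you cite slides $x$ past $b$'s from the left; but in $(bcxb)^m$ the successive $x$'s are separated by blocks of the form $b^2c$ on the right of $x$, and there is no analogous identity $xw = w'x$ for $w\in\{b,c\}$ available from $bx=xc$ alone. So the proposed "gathering into a central block" does not occur, and the sketched induction cannot be carried out. If you replace your substitution by $a=b^{-1}c^{-1}xc$ and carry through the telescoping, you recover exactly the paper's proof.
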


\begin{proof}
Define $\phi :\mathrm{Art}_{2,3,2m}\rightarrow H$ by 
\begin{equation*}
\phi (a)=b^{-1}c^{-1}xc,\phi (b)=b,\phi (c)=c
\end{equation*}%
and $\psi :H\rightarrow \mathrm{Art}_{2,3,2m}$ by%
\begin{eqnarray*}
\psi (b) &=&b,\psi (c)=c,\psi (x)=c(ba)c^{-1}, \\
\psi (y) &=&bc,\psi (\alpha )=bc(ba)c^{-1},\psi (\delta )=bc(ba)^{m}.
\end{eqnarray*}%
Note that%
\begin{eqnarray*}
\phi (ca) &=&cb^{-1}c^{-1}xc=b^{-1}c^{-1}cbcb^{-1}c^{-1}xc \\
&=&b^{-1}c^{-1}(bcb)b^{-1}c^{-1}xc=b^{-1}c^{-1}bxc=b^{-1}c^{-1}xcc=\phi (ac)
\end{eqnarray*}%
since $bx=xc,$ and%
\begin{eqnarray*}
\phi (bcb) &=&bcb=yb=cy=cbc=\phi (cbc), \\
\phi ((ab)^{m})
&=&(b^{-1}c^{-1}xcb)^{m}=b^{-1}c^{-1}x^{m}cb=b^{-1}c^{-1}(b^{-1}\delta
c^{-1})cb \\
&=&b^{-1}c^{-1}b^{-1}\delta b=c^{-1}b^{-1}c^{-1}(c\delta
)=c^{-1}b^{-1}\delta , \\
\phi ((ba)^{m}) &=&(bb^{-1}c^{-1}xc)^{m}=c^{-1}x^{m}c=c^{-1}b^{-1}\delta
c^{-1}c=c^{-1}b^{-1}\delta .
\end{eqnarray*}%
Furthermore, we have%
\begin{eqnarray*}
\psi (\alpha ) &=&bc(ba)c^{-1}=b\psi (x), \\
\psi (\delta b) &=&bc(ba)^{m}b=bcb(ab)^{m}=cbc(ba)^{m}=\psi (c\delta ), \\
\psi (\alpha x^{m-2}\alpha ) &=&bc(ba)c^{-1}c(ba)^{m-2}c^{-1}bc(ba)c^{-1} \\
&=&bc(ba)^{m-1}c^{-1}(cbc)c^{-1}a=bc(ba)^{m}=\psi (\delta ).
\end{eqnarray*}
This checked that $\phi ,\psi $ are group homomorphisms. It is obvious that $%
\psi \circ \phi =id,\phi \circ \psi =id.$
\end{proof}

Note that $H$ has the presentation complex with a single vertex as shown in Figure \ref{fig:pre-cpx-2-3-2m} where all the edges with the same labels are identified. Let us denote this complex by $X_H.$
%

\begin{figure}[h]
\centering
\begin{tikzpicture} [line width=1pt, scale = 0.4]


 \begin{scope}[very thick,decoration={
    markings,
    mark=at position 0.5 with {\arrow{>}}}
    ] 
 
 \draw[postaction={decorate}] (-16,0) -- (8,0);
  \draw[postaction={decorate}] (-16,0) -- (-12,0);
 \draw[postaction={decorate}] (4,0) -- (8,0);
\draw[postaction={decorate}] (-15,-3) -- (-16,0);
\draw[postaction={decorate}] (4,0) -- (5, -3);
\draw[postaction={decorate}] (8,0) -- (5, -3);
\draw[postaction={decorate}] (-15,-3) -- (-19, -3);
\draw[postaction={decorate}] (-15,-3) -- (5, -3);
\draw[postaction={decorate}] (-15,-3) -- (-12, 0);
\draw[postaction={decorate}] (-16,0) -- (-19, -3);
\draw[postaction={decorate}] (-19,-3) -- (-22, -6);
 \draw[postaction={decorate}] (5,-3) -- (2, -6);
 \draw[postaction={decorate}] (-15,-3) -- (-18, -6);
\draw[postaction={decorate}] (-18,-6) -- (-22, -6);
\draw[postaction={decorate}] (-18,-6) -- (2, -6);

  \draw (12,0) -- (12,-6);
   
      

\end{scope}

  \filldraw

  (-16,0) circle (2.5pt)
  (4,0) circle (2.5pt)
  (8,0) circle (2.5pt)
  (-12,0) circle (2.5pt)

  (-19,-3)circle (2.5pt)
  (-15,-3)circle (2.5pt)
  (5,-3) circle (2.5pt)

  (-18,-6)circle (2.5pt)
  (2,-6)circle (2.5pt)
  (-22,-6) circle (2.5pt);
  
\draw (-5,-2.5)  node[text=black, scale=.7]{$\delta$};
\draw (-8,-5.5)  node[text=black, scale=.7]{$\delta$};

\draw (-3.4,.5)  node[text=black, scale=.7]{$x^{m-2}$};
\draw (-14,.5)  node[text=black, scale=.7]{$x$};

\draw (6,.5)  node[text=black, scale=.7]{$x$};

\draw (6,.5)  node[text=black, scale=.7]{$x$};

\draw (-17.8,-1.3)  node[text=black, scale=.7]{$c$};
\draw (-15.9,-1.3)  node[text=black, scale=.7]{$b$};
\draw (-13.8,-1.2)  node[text=black, scale=.7]{$\alpha$};
\draw (3.8,-1.3)  node[text=black, scale=.7]{$\alpha$};
\draw (7.2,-1.3)  node[text=black, scale=.7]{$c$};

\draw (-5,-2.5)  node[text=black, scale=.7]{$\delta$};
\draw (-17,-2.5)  node[text=black, scale=.7]{$y$};

\draw (-20.8,-4.3)  node[text=black, scale=.7]{$b$};
\draw (-15.9,-4.3)  node[text=black, scale=.7]{$c$};
\draw (4.2,-4.3)  node[text=black, scale=.7]{$b$};
  
\draw (-20,-5.5)  node[text=black, scale=.7]{$y$}; 

 \draw (12,0)  node[text=black, scale=.7]{$-$};  

  \draw (12,-1.5)  node[text=black, scale=.7]{$-$};

   \draw (12,-3)  node[text=black, scale=.7]{$-$};

    \draw (12,-4.5)  node[text=black, scale=.7]{$-$};
        \draw (12,-6)  node[text=black, scale=.7]{$-$};

     \draw (12.5,0)  node[text=black, scale=.7]{$0$};  

  \draw (12.5,-1.5)  node[text=black, scale=.7]{$\frac{1}{2}$};

   \draw (12.5,-3)  node[text=black, scale=.7]{$1$};

    \draw (12.5,-4.5)  node[text=black, scale=.7]{$\frac{1}{2}$};
        \draw (12.5,-6)  node[text=black, scale=.7]{$0$};    
 
\draw[->](8.5,-3) -> (10.5,-3);
  \draw (9.5,-2.5)  node[text=black, scale=.7]{$h$};
   
\end{tikzpicture}
\caption{The presentation complex $X_H$}
\label{fig:pre-cpx-2-3-2m}
\end{figure}
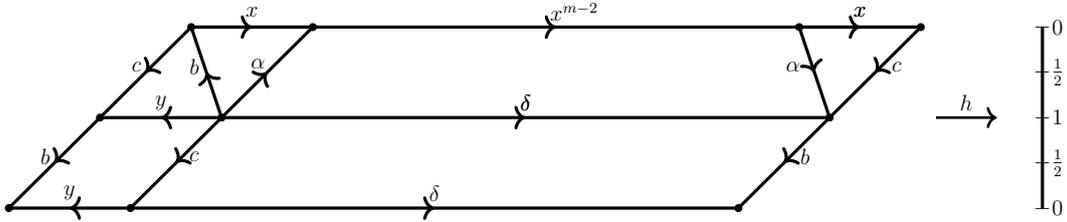

Considering a $[0,1/2]$-valued height function on this presentation complex $X_H$, we obtain a splitting of $\Art_{2,3,2m}$, using a similar idea as in \cite[Section 4]{Jan22}.

\begin{theorem}\label{23even}
When $m \geq 3$, the Artin group $\mathrm{Art}_{2,3,2m}=A*_CB$  is an amalgamated product of free groups for $A=\langle x,y, \delta \rangle \cong F_{3}, B\cong F_{4}, C\cong F_{7}.$ Here $x=c(ba)c^{-1},y=bc,\delta = bc(ba)^m.$
\end{theorem}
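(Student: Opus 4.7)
The plan is to exploit the height function $h$ depicted in Figure~\ref{fig:pre-cpx-2-3-2m} to decompose the presentation complex $X_H$ as a union of two subcomplexes glued along a graph, and then apply the Seifert--van Kampen theorem. This follows the philosophy of Jankiewicz's height-function approach in \cite[Section 4]{Jan22}. The first step is to make the $[0,1/2]$-valued height function precise on the 1-skeleton: reading off the figure, one sets the ``horizontal'' edges $y$ and $\delta$ constantly equal to $1/2$, and sends each of the ``vertical'' edges $b, c, \alpha, x$ to a path from $0$ up to $1/2$ and back. The height function is then extended piecewise-linearly over each of the six 2-cells compatibly with the cell structure shown in the figure.

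Next, pick a generic level $t \in (0, 1/2)$ and set $X^+ = h^{-1}([t, 1/2])$, $X^- = h^{-1}([0, t])$, with intersection $\Lambda = h^{-1}(t)$. The plan is to show that $X^+$ deformation retracts onto a wedge of three circles representing $x, y, \delta$, giving $\pi_1(X^+) \cong F_3$, while $X^-$ retracts onto a rank-$4$ graph, giving $\pi_1(X^-) \cong F_4$. The level set $\Lambda$ is a graph with one vertex for each crossing of a 1-cell with height $t$ and one arc for each 2-cell slab; a direct count gives $\chi(\Lambda) = -6$, so $\pi_1(\Lambda) \cong F_7$.

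Finally, one verifies that the inclusions $\Lambda \hookrightarrow X^\pm$ factor through combinatorial immersions onto the respective spine graphs and are therefore $\pi_1$-injective by Lemma~\ref{stallings}. Applying Seifert--van Kampen to $X_H = X^+ \cup_\Lambda X^-$ then yields
\[
\mathrm{Art}_{2,3,2m} \cong \pi_1(X_H) \cong F_3 \ast_{F_7} F_4,
\]
and the explicit expressions $x = c(ba)c^{-1}$, $y = bc$, $\delta = bc(ba)^m$ come directly from the isomorphism $\psi: H \to \mathrm{Art}_{2,3,2m}$ constructed above.

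The main obstacle I expect is the rank bookkeeping in the middle step: the large 2-cell encoding the relation $\delta = \alpha x^{m-2}\alpha$ has boundary length depending on $m$, so a priori the ranks of $\pi_1(X^\pm)$ and $\pi_1(\Lambda)$ could vary with $m$. The substantive combinatorial content is to check that the contributions of this $m$-dependent cell cancel after deformation retraction, producing the fixed ranks $3$, $4$, and $7$ for every $m \geq 3$; I would perform this cancellation by explicitly collapsing the $x^{m-2}$ strip in both halves and in $\Lambda$ simultaneously, reducing to a finite combinatorial check independent of $m$.
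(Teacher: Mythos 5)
Your overall strategy---decompose Hanham's presentation complex $X_H$ via a height function, identify the level-set graphs and their ranks, use Stallings' immersion lemma for $\pi_1$-injectivity, and conclude by Seifert--van Kampen---is exactly the paper's. The problem is in the first concrete step: your reading of the height function contradicts Figure~\ref{fig:pre-cpx-2-3-2m}. In that figure the \emph{three horizontal lines carry the edges $x$, $x^{m-2}$, $y$, $\delta$ and all sit at height $0$} (with $1$ identified to $0$); the edges passing through level $1/2$ are $b$, $c$, $\alpha$. You instead place $y,\delta$ at level $1/2$ and treat $x$ as vertical, which is incorrect on both counts. This matters: with your height function the level set $h^{-1}(1/2)$ contains only the loops $y,\delta$ from the 1-skeleton, and the large cell encoding $\delta=\alpha x^{m-2}\alpha$ would have a zigzagging $x^{m-2}$ side, so the slab structure and the count $\chi(\Lambda)=-6$ cannot be produced from your setup. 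With the correct height function the rank bookkeeping you flag as the main obstacle is in fact straightforward: $X_0=h^{-1}(0)$ is a wedge of the three loops $x,y,\delta$, hence $F_3$; $X_{1/2}$ has three vertices (the midpoints of $b,c,\alpha$) and one arc per 2-cell, six arcs in all, hence $\chi=-3$ and $F_4$; and $X_{1/4}$ has six vertices (two on each of $b,c,\alpha$) and two arcs per 2-cell, twelve arcs, hence $\chi=-6$ and $F_7$.

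A related point about the $m$-dependence: you propose to "collapse the $x^{m-2}$ strip" to cancel the contribution of the large cell. With the correct height function this is unnecessary---since $x^{m-2}$ is horizontal (at level $0$), the big rectangular cell always contributes exactly one arc to each intermediate level set regardless of $m$, so the ranks $3,4,7$ are manifestly independent of $m$. Finally, the $\pi_1$-injectivity step is stated but not checked; the paper verifies it concretely (folding the map $\bar X_{1/4}\to X_0$ and observing there is no folding edge, and observing $X_{1/4}\to X_{1/2}$ is a $2$-sheeted cover), and those verifications are the content-bearing part of the argument that your sketch leaves open.
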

\begin{proof}
We first put a height function $h: X_H \rightarrow [0,1/2]$ on the presentation complex  $X_H$ with the three horizontal lines all mapped to $1=0$, and the middle points between those lines mapped to $\frac{1}{2}$, see Figure \ref{fig:pre-cpx-2-3-2m}. Note that $t$ and $1-t$ are then identified, for any $t\in [0,1/2]$.

\begin{figure}[ht!]
\centering
\includegraphics[width=90mm]{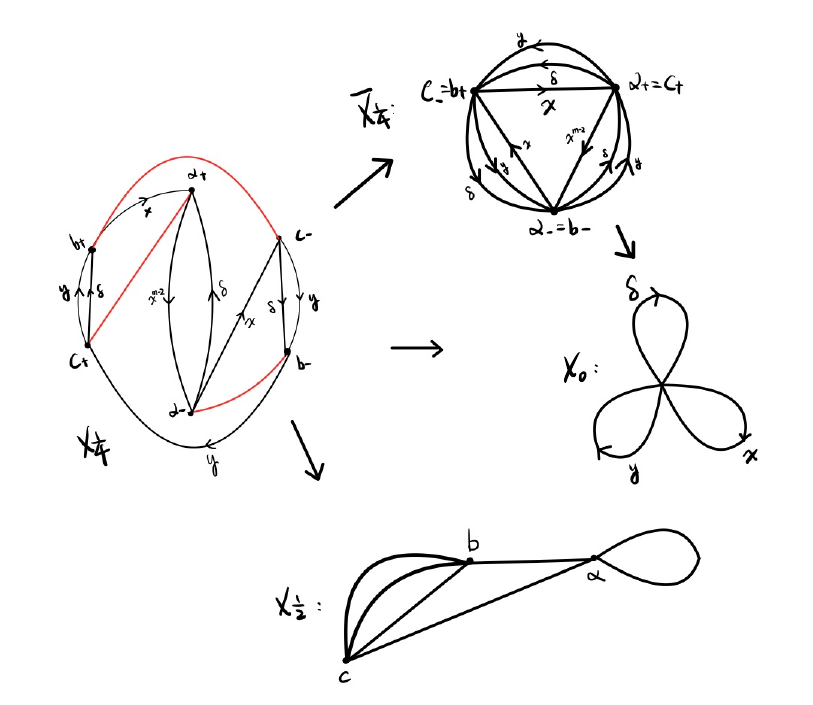}
\caption{Splitting of $A_{2,3,2m}$ }
\label{pic-23even}
\end{figure}

Fix $1/4<\epsilon < 1/2$. We now divide $X_H$ into three pieces: 
$$N_{0} = h^{-1}([0,\epsilon)), N_{\frac{1}{2}} = h^{-1}((1/2 - \epsilon,1/2]), \text{and } N_{\frac{1}{4}} = N_0\cap N_{\frac{1}{2}}.$$
They are tubular neighborhoods of  $X_{0} = h^{-1}(\{0\}), X_{\frac{1}{2}} = h^{-1}(\frac{1}{2}), \text{and } X_{\frac{1}{4}} = h^{-1}(\frac{1}{4})$, respectively. See Figure \ref{pic-23even} for a picture of $X_0$, $X_{\frac{1}{2}}$ and $X_{\frac{1}{4}}$. Note that $X_{0}, X_{\frac{1}{4}} $ and $X_{\frac{1}{2}}$ are graphs. Define $$A = \pi_1{X_0} = \pi_1(N_0), B = \pi_1(X_{\frac{1}{2}}) = \pi_1(N_{\frac{1}{2}})$$ and $C  = \pi_1(X_{\frac{1}{4}}) = \pi_1(N_{\frac{1}{4}})$. Note that $A$ is a free group of rank $3$, $B$ is a free group of rank $4$, and $C$ is a free group of rank $7$. To prove the theorem, it suffices now to show that the composition of the  maps 
$$X_{\frac{1}{4}} \hookrightarrow N_{0} \rightarrow X_{0},$$
and 
$$X_{\frac{1}{4}} \hookrightarrow N_{\frac{1}{2}} \rightarrow X_{\frac{1}{2}}, $$
induces injective maps on fundamental groups. Now in the graph of $X_{\frac{1}{4}}$, we have indicated how the edges are mapped to that of $X_0$.  In fact,  one  first collapses the red edges in $X_{\frac{1}{4}}$, in which case $X_{\frac{1}{4}}$ becomes the graph $\bar{X}_{\frac{1}{4}}$, then maps it to $X_0$ using the edge labeling. The key observation now is that the map from $\bar{X}_{\frac{1}{4}}$ to $X_0$ has no folding edge and thus is a combinatorial immersion. Therefore, the induced map of their fundamental groups is injective, i.e.~the map from $C$ to $A$ is injective, by Lemma \ref{stallings}. On the other hand, the map from $X_{\frac{1}{4}}$ to $X_{\frac{1}{2}}$ is a $2$-sheet covering. Thus the induced map from $C$ to $B$ is again injective.  Therefore, we have proved that $\mathrm{Art}_{2,3,2m}$ splits as $A\ast_{C}B$ where $A$ is a free group of rank $3$, $B$ is a free group of rank $4$ and $C$ is a free group of rank $7$.

\end{proof}

\section{Non-splitting of the Artin group $(2,3,2m+1)$}\label{sec:split-23odd}

In this section, we will study the isometric actions of Artin groups on simplicial trees and finish the proof of Theorem \ref{th1}.  For an isometry $g: T \rightarrow T$ of a simplicial tree $T$, the minimal set
$$\mathrm{Min}(g)=\{x\in T: d(x,g x)=\inf_{y\in T}\{d(y,g y)\}\},$$ 
is a subtree of $T$. When $d(x,gx)>0$ for some (hence any) $x\in \mathrm{Min}(g)$, the isometry $g$ is called a hyperbolic isometry of translation length $d(x,gx)$. In this case, the minimal set $\mathrm{Min}(g)$ is a geodesic line, called the geodesic axis of $g$. When $d(x,gx)=0$ for some (hence any) $x\in \mathrm{Min}(g)$, the element $g$ is called elliptic. In this case, we denote $\mathrm{Fix}(g)=\mathrm{Min}(g)=\{x\in T \mid gx=x \}$, the fixed point set of $g$.

Let $$A_{I_{2m+1}} = \langle a,b \mid (ab)^m a = b (ab)^m \rangle$$ be the Artin group of odd dihedral type. We will keep on using the following fact, see for example \cite[Proof of Lemma 3.4]{Wu22} for a proof.

\begin{Fact}\label{fact-dih-amalg}
    $A_{I_{2m+1}}$ has the amalgamated product structure
    $$\langle x\rangle \ast_{x^{2m+1}= s^2} \langle s\rangle, $$
    where $x= ab$ and $s = (ab)^ma$.  
\end{Fact}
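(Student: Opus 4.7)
The plan is to verify the claimed decomposition by constructing mutually inverse isomorphisms between $A_{I_{2m+1}}$ and the one-relator group $G := \langle x,s \mid x^{2m+1}=s^2\rangle$. By the universal property of amalgamated products, $G$ is exactly the amalgam $\langle x\rangle *_{x^{2m+1}=s^2}\langle s\rangle$, so it suffices to produce such an isomorphism.

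First I would define $\phi : G \to A_{I_{2m+1}}$ by $\phi(x) = ab$ and $\phi(s) = (ab)^m a$, and check that the sole relator of $G$ is killed. Using the Artin relation $(ab)^m a = b(ab)^m$ in one of the two middle factors, one computes
$$\phi(s)^2 = (ab)^m a \cdot (ab)^m a = (ab)^m \cdot a \cdot b(ab)^m = (ab)^m (ab)(ab)^m = (ab)^{2m+1} = \phi(x)^{2m+1},$$
so $\phi$ is well-defined.

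For the inverse, the equalities $x = ab$ and $s = x^m a$ suggest the formulas $a = x^{-m}s$ and $b = a^{-1}x = s^{-1}x^{m+1}$, so I would define $\psi : A_{I_{2m+1}} \to G$ by these assignments on the generators. To verify that the Artin relation is respected under $\psi$, note first that $\psi(ab) = x^{-m}s\cdot s^{-1}x^{m+1} = x$, whence
$$\psi((ab)^m a) = x^m\cdot x^{-m}s = s, \qquad \psi(b(ab)^m) = s^{-1}x^{m+1}\cdot x^m = s^{-1}x^{2m+1} = s^{-1}\cdot s^2 = s,$$
where the last computation uses the relator of $G$. Hence $\psi$ is a well-defined homomorphism.

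Finally, I would check $\phi\circ\psi = \mathrm{id}$ and $\psi\circ\phi = \mathrm{id}$ on generators: $\phi\psi(a) = (ab)^{-m}(ab)^m a = a$, $\phi\psi(b) = a^{-1}(ab)^{-m}(ab)^{m+1} = b$, $\psi\phi(x) = \psi(ab) = x$, and $\psi\phi(s) = x^m\cdot x^{-m}s = s$. No serious obstacle is expected; the only point requiring care is bookkeeping of exponents and the consistent use of $s = (ab)^m a$ (rather than the equivalent $s = b(ab)^m$) when verifying the relator identities.
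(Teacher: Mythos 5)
Your proof is correct. The paper does not give its own argument for this Fact (it merely cites the proof of Lemma 3.4 in \cite{Wu22}), but the route you take — recognizing the amalgam $\langle x\rangle \ast_{x^{2m+1}=s^2}\langle s\rangle$ as the one-relator group $\langle x,s\mid x^{2m+1}=s^2\rangle$ and then exhibiting mutually inverse homomorphisms $\phi,\psi$ between it and $A_{I_{2m+1}}$ — is the standard and essentially forced approach. All the verifications check out: $\phi$ respects the relator via one application of $(ab)^m a = b(ab)^m$; $\psi$ respects the Artin relation using $x^{2m+1}=s^2$; and the composites act as the identity on generators. The only point worth being explicit about, which you do implicitly, is that the presentation $\langle x,s\mid x^{2m+1}=s^2\rangle$ is the amalgam presentation because the maps $\mathbb{Z}\to\langle x\rangle$, $t\mapsto x^{2m+1}$, and $\mathbb{Z}\to\langle s\rangle$, $t\mapsto s^2$, are injective; this is automatic since the factors are infinite cyclic.
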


Now let $$\mathrm{Art}_{2,2n+1,2m+1}=\langle
a,b,c \mid ac=ca,(bc)^{n}b=c(bc)^{n},(ab)^{m}a=b(ab)^{m}\rangle $$ be the triangle
Artin group. Let $x=ab,y=cb \in A_{2,2n+1,2m+1}.$ Note that $x^{m}a=bx^{m}$ and $y^{n}c=by^{n},$
which implies that $a,b,c$ are conjugate.

The actions of the dihedral Artin group $A_{I_{2m+1}}$ on trees are classified in the following lemma.

\begin{lemma}
\label{dihedr}Let $A_{I_{2m+1}}$ act on a tree $T$ by isometries. We then must have
one of the following.
\begin{enumerate}[label=(\arabic*).]
    \item \label{lem-dihedr-1} If $\mathrm{Fix}(ab)=\emptyset ,$ there is a geodesic line on which $%
A_{I_{2m+1}}$ acts by translations;
    \item If $\mathrm{Fix}(ab)\neq \emptyset $ and $\mathrm{Fix}(ab)\cap \mathrm{Fix%
}((ab)^{m}a)\neq \emptyset ,$ there is a global fixed point;
    \item If $\mathrm{Fix}(ab)\neq \emptyset $ and $\mathrm{Fix}(ab)\cap \mathrm{Fix%
}((ab)^{m}a)=\emptyset ,$ there is an invariant non-trivial subtree $T_{0}$, on which the action of $A_{I_{2m+1}}$ factors through an action of $\mathbb{Z}/(2m+1)\ast \mathbb{Z}/2$, where the image of $ab$ generates $\mathbb{Z}/(2m+1)$ and
the image of $(ab)^{m}a$ generates $\mathbb{Z}/2$. 

\end{enumerate}
\end{lemma}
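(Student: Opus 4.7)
The plan is to leverage Fact \ref{fact-dih-amalg}, which identifies $A_{I_{2m+1}} = \langle x \rangle *_{x^{2m+1}=s^2} \langle s \rangle$ with $x = ab$ and $s = (ab)^m a$. Two consequences will be used repeatedly: the group is generated by $\{x, s\}$, and the element $x^{2m+1} = s^2$ lies in the center of $A_{I_{2m+1}}$ (as it sits in both abelian factors of the amalgam). I would then split according to the three cases of the statement, distinguished by the fixed-point behavior of $x = ab$.

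For case \ref{lem-dihedr-1}, if $\mathrm{Fix}(x) = \emptyset$ then $x$ is hyperbolic with axis $L = \mathrm{Min}(x)$. Since $s^2 = x^{2m+1}$, the element $s^2$ is hyperbolic with the same axis $L$. Invoking the standard fact that $g^2$ hyperbolic on a tree forces $g$ to be hyperbolic with $\mathrm{Min}(g) = \mathrm{Min}(g^2)$, one gets that $s$ too is hyperbolic with axis $L$. Thus both generators preserve $L$ and translate along it; since $\{x, s\}$ generates $A_{I_{2m+1}}$, the entire group preserves $L$. Composing the restricted action $A_{I_{2m+1}} \to \mathrm{Isom}(L)$ with the quotient $\mathrm{Isom}(L) \twoheadrightarrow \mathbb{Z}/2$ gives a homomorphism that vanishes on both generators, so every element acts on $L$ by translation.

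For case (2), any point of $\mathrm{Fix}(ab) \cap \mathrm{Fix}((ab)^m a) = \mathrm{Fix}(x) \cap \mathrm{Fix}(s)$ is fixed by the generating set $\{x, s\}$, hence by all of $A_{I_{2m+1}}$. For case (3), I first observe that $x$ being elliptic forces $x^{2m+1} = s^2$ to be elliptic, which in turn forces $s$ to be elliptic (if $s$ were hyperbolic, so would $s^2$ be). The proposed invariant subtree is $T_0 := \mathrm{Fix}(x^{2m+1})$, which is $A_{I_{2m+1}}$-invariant because $x^{2m+1}$ is central. It contains $\mathrm{Fix}(x)$ directly, and contains $\mathrm{Fix}(s) \subseteq \mathrm{Fix}(s^2) = \mathrm{Fix}(x^{2m+1})$ via the relation $s^2 = x^{2m+1}$. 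Since these two fixed sets are disjoint non-empty subtrees by hypothesis, $T_0$ must contain the unique geodesic bridge between them, hence is non-trivial. Finally, $x^{2m+1}$ acts trivially on $T_0$ by construction, so the action factors through $A_{I_{2m+1}}/\langle\langle x^{2m+1}\rangle\rangle$, which by the amalgam presentation equals $\langle x \mid x^{2m+1}\rangle * \langle s \mid s^2\rangle \cong \mathbb{Z}/(2m+1) * \mathbb{Z}/2$, sending $ab$ to the generator of order $2m+1$ and $(ab)^m a$ to the generator of order $2$.

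The case analysis itself is largely routine once the input facts (centrality of $x^{2m+1}$, the $g^2$-hyperbolic-implies-$g$-hyperbolic lemma, and generation by $\{x,s\}$) are in place. The main conceptual step, and the one I expect to be the crux, is the identification in case (3) of the correct invariant subtree as $\mathrm{Fix}(x^{2m+1})$: it is the natural choice because killing the central element $x^{2m+1} = s^2$ in the amalgam collapses it precisely to $\mathbb{Z}/(2m+1) * \mathbb{Z}/2$, and the fixed set of a central element is automatically group-invariant, so the two requirements of the statement (invariance and the claimed factorization) are delivered simultaneously.
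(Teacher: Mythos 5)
Your proof is correct, and it follows a genuinely different (and arguably cleaner) route than the paper's on two of the three cases. In case (1), the paper restricts the action to $\mathrm{Min}((ab)^{2m+1})$, treats it as a map to $\mathbb{R}\rtimes\mathbb{Z}/2$, and then rules out reflections by plugging $a,b$ into the relator $(ab)^m a = b(ab)^m$ and reaching a contradiction; you instead note that $x=ab$ and $s=(ab)^m a$ are \emph{both} hyperbolic with the same axis (via $s^2=x^{2m+1}$), so the generating set already consists of translations and the image under $\mathrm{Isom}(L)\twoheadrightarrow\mathbb{Z}/2$ is trivial. Your argument avoids the computation and the reflection case entirely. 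In case (3), the paper builds $T_0=\bigcup_{g\in A}[gp_0,gq_0]$ from the bridge between the two fixed-point sets and shows by hand that it is an invariant subtree; you take $T_0=\mathrm{Fix}(x^{2m+1})$, which is automatically invariant by centrality, contains both $\mathrm{Fix}(x)$ and $\mathrm{Fix}(s)\subseteq\mathrm{Fix}(s^2)$, and hence the bridge by convexity, and on which $x^{2m+1}$ acts trivially by definition. Both deliver what the lemma asserts; the paper's explicit bridge-construction of $T_0$ is retained because it yields the finer picture (Figure \ref{actiontree}) that is reused directly in the proof of Theorem \ref{thm-tree-action-clf}, whereas your choice of $\mathrm{Fix}(x^{2m+1})$ is the minimal machinery needed for the lemma statement itself.
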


   \begin{proof}
Note first that the element $(ab)^{2m+1}$ generates the center of $A=A_{I_{2m+1}}.$ If $ab$ has no
fixed points, the minimal set $\mathrm{Min}((ab)^{2m+1})$ is a geodesic
line, on which $A$ acts invariantly. This gives a group homomorphism $$f:A%
\rightarrow \mathrm{Isom}(\mathbb{R})=\mathbb{R}\rtimes \mathbb{Z}/2.$$ Since 
$(ab)^{m}a=b(ab)^{m},$ we know that $\mathrm{Im}(f)$ lies in the translation
subgroup $\mathbb{R} \leq \mathrm{Isom}(\mathbb{R})$ by the following argument. Note first that the elements $a,b$ are conjugate.  If both $a$ and $b$ act by
reflections on $\mathrm{Min}((ab)^{2m+1}),$ we can assume  that $%
f(a)(t)=-t+a_{0},f(b)(t)=-t+b_{0}$ for some real numbers $a_{0},b_{0}$, where $%
t\in $ $\mathbb{R}$. The relator $(ab)^{m}a=b(ab)^{m}$ implies that 
\begin{eqnarray*}
(ab)^{m}a(t) &=&-t+a_{0}+m(a_{0}-b_{0}) \\
&=&b(ab)^{m}(t)=-t-m(a_{0}-b_{0})+b_{0}
\end{eqnarray*}%
and $a_{0}=b_{0}.$ This would imply $ab$ acts trivially on $\mathrm{Min}%
((ab)^{2m+1}),$ which is a contradiction. 

If $ab$ has a fixed point, then $A$
acts invariantly on the fixed point set $\mathrm{Fix}((ab)^{2m+1}),$ which is a
subtree. The action factors through $A/\langle (ab)^{2m+1}\rangle =\mathbb{Z}%
/(2m+1)\ast \mathbb{Z}/2,$ where $x:=ab$ is the generator of $\mathbb{Z}%
/(2m+1)$ and $s:=(ab)^{m}a$ is the generator of $\mathbb{Z}/2.$  In this case, the element $s$  (as a torsion element in the quotient group) must also have a fixed point. 

If $%
\mathrm{Fix}(ab)\cap \mathrm{Fix}((ab)^{m}a)\neq \emptyset ,$ any point $z\in \mathrm{Fix}(ab)\cap
\mathrm{Fix}((ab)^{m}a)$ will be a global fixed point of $A$ since $ab$ and $(ab)^{m}a$ generate $A$.

If $%
\mathrm{Fix}(ab)\cap \mathrm{Fix}((ab)^{m}a)=\emptyset ,$ there are unique points $p_0\in
\mathrm{Fix}((ab)^{m}a),q_0\in \mathrm{Fix}(ab)$ such that 
\begin{equation*}
d(p_0,q_0)=\min \{d(p,q):p\in \mathrm{Fix}((ab)^{m}a),q\in \mathrm{Fix}(ab)\},
\end{equation*}
since the fixed point sets are closed convex subtrees. Note that the intersection of
the segments $[q_0,p_0]\cap \lbrack b(ab)^{m}q_0,p_0]=\{p_0\},$ by
the choice of $p_0.$  In fact, any $p\in \lbrack sq_{0},p_{0}]\cap
\lbrack q_{0},p_{0}]$ must be fixed by $s$ and $d(p,q_{0})\leq d(p_{0},q_{0})$, thus $p=p_0$ by the minimality of $d(p_0,q_0)$. 
Similarly, we have 
$[q_{0},xp_{0}]\cap [q_{0},p_{0}]=\{q_{0}\}.$  This further implies that the union $%
T_{0}:=\cup _{g\in A}[gp_0,gq_0]$ is connected,  hence an invariant non-trivial subtree, see Figure \ref{actiontree}. 

\begin{figure}
\centering

\begin{tikzpicture}[line width=1.0pt, scale = 1.5] 

\begin{scope}[decoration={
    markings,
    mark=at position .6 with {\arrow{}}}]
   \draw[-] (-1,0) --  (0,0);
   \draw[-] (0,0) --  (1,0);
   \draw[-] (-2,1) --  (-1,0);
   \draw[-] (-2,-1) --  (-1,0);
   \draw[-] (1,0) --  (2,1);
   \draw[-] (1,0) --  (2,-1);
  
\filldraw (0,0) circle (2.5pt);
 \filldraw       (1,0) circle (2.5pt);
 \filldraw       (-1,0) circle (2.5pt);
 \filldraw       (-2,-1) circle (2.5pt);
  \filldraw       (-2,1) circle (2.5pt);
   \filldraw       (2,-1) circle (2.5pt);
    \filldraw       (2,1) circle (2.5pt);

\draw (0, -0.3)  node[text=black, scale=1.0]{$p_0$};
\draw (-1 - 0.5, 0)  node[text=black, scale=1.0]{$q_0$};

\draw (-0.8, -0.4)  node[text=black, scale=1.0]{$x$};
\draw (-2, 1+0.3)  node[text=black, scale=1.0]{$a p_0 = x^{-m} p_0$};
\draw (-2, -1-0.3)  node[text=black, scale=1.0]{$b^{-1} p_0 = x^{m} p_0$};
 \draw (1 + 0.7, 0)  node[text=black, scale=1.0]{$b q_0 = s q_0$};
\draw (2, -1-0.3)  node[text=black, scale=1.0]{$a^{-1} p_0$};
\draw (2, 1 + 0.3)  node[text=black, scale=1.0]{$b p_0 = sx^{-m} p_0$};
\draw (0, 0.4)  node[text=black, scale=1.0]{$s$};
\draw (0, -0.3)  node[text=black, scale=1.0]{$p_0$};

 \draw [black,<->,thick,domain=0:180] plot ({0.3*cos(\x)}, {0.3*sin(\x)});

\draw [black,->,thick,domain=0:-120] plot ({-1 + 0.3*cos(\x)}, {0.3*sin(\x)});

\end{scope}

\end{tikzpicture}
\caption{The action of $A_{I_{2m+1}}$ on a tree $T$, where $x=ab$ acts as a rotation along $q_0$ and $s=(ab)^ma$ acts a reflection along $p_0$.}
\label{actiontree}
\end{figure}
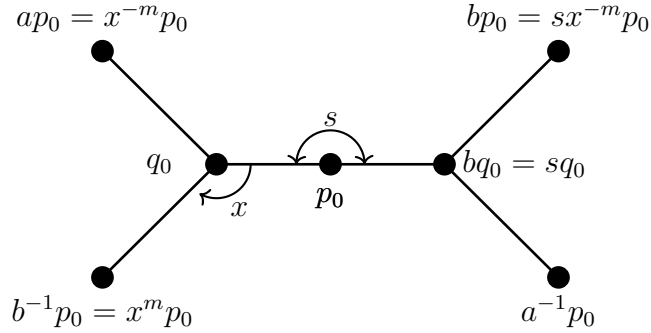
\end{proof}

Now we study the isometric actions of triangle Artin groups on trees.
\begin{lemma}
\label{lem2}Suppose that $\mathrm{Art}_{2,2n+1,2m+1}$ acts on a tree $T$ such that 
the fixed point set $\mathrm{Fix}(a)$ of $a$ is nonempty. Then the action has a global
fixed point.
\end{lemma}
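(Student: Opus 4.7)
The plan is to reduce the problem to showing that the three fixed-point subtrees $\mathrm{Fix}(a), \mathrm{Fix}(b), \mathrm{Fix}(c)$ pairwise intersect, and then invoke the Helly property for convex subtrees of a tree (three pairwise intersecting subtrees always share the median of any three pairwise intersection points, since that median lies on each geodesic between two of those points). Any point of the resulting triple intersection is fixed by the generating set $\{a,b,c\}$ and hence by the whole triangle Artin group. As an initial reduction, the identities $x^{m}a=bx^{m}$ and $y^{n}c=by^{n}$ displayed in the lead-up to the lemma show that $a,b,c$ are pairwise conjugate, so from $\mathrm{Fix}(a)\neq\emptyset$ I immediately obtain $\mathrm{Fix}(b)\neq \emptyset$ and $\mathrm{Fix}(c)\neq \emptyset$ as well. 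I will also repeatedly use the elementary fact that an elliptic isometry of $T$ is elliptic on every invariant subtree (the closest point of the subtree to any fixed point of the isometry is itself fixed).

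For the commuting pair $\{a,c\}$, the fact that $c$ preserves the subtree $\mathrm{Fix}(a)$, together with $c$ being elliptic on $T$, immediately yields $\mathrm{Fix}(a)\cap\mathrm{Fix}(c)\neq\emptyset$. For $\{a,b\}$, which generates $\langle a,b\rangle\cong A_{I_{2m+1}}$, I would apply Lemma \ref{dihedr}. Case \ref{lem-dihedr-1} is ruled out because an invariant line on which the group acts by translations would make $a$ hyperbolic, contradicting $\mathrm{Fix}(a)\neq\emptyset$. The main technical step is to rule out Case (3). There, on the invariant subtree $T_{0}$ the action factors through $\mathbb{Z}/(2m+1)\ast\mathbb{Z}/2$, with the images $\bar{x}$ and $\bar{s}$ of $ab$ and $(ab)^{m}a$ having disjoint fixed sets in $T_{0}$; since $\gcd(m,2m+1)=1$, any non-trivial power of $\bar{x}$ shares the same fixed set as $\bar{x}$, so $\mathrm{Fix}(\bar{x}^{-m})=\mathrm{Fix}(\bar{x})$. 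I then use the standard tree-theoretic fact that the product of two elliptic isometries with disjoint fixed sets is hyperbolic of translation length at least twice the distance between those sets, applied to $\bar{a}=\bar{x}^{-m}\bar{s}$, to conclude that $a$ acts hyperbolically on $T_{0}$. But $a$ is elliptic on $T$ and therefore on the invariant subtree $T_{0}$, a contradiction. Thus we must be in Case (2), yielding a common fixed point of $ab$ and $(ab)^{m}a$; as these two elements generate $A_{I_{2m+1}}$, this gives $\mathrm{Fix}(a)\cap\mathrm{Fix}(b)\neq\emptyset$. The identical argument applied to $\langle b,c\rangle\cong A_{I_{2n+1}}$ gives $\mathrm{Fix}(b)\cap\mathrm{Fix}(c)\neq\emptyset$, and combining the three pairwise intersections via Helly produces a global fixed point.

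The main obstacle is the Case (3) verification above, which essentially demands that any elliptic action of $A_{I_{2m+1}}$ on a tree in which $a$ (equivalently $b$) is elliptic must have a global fixed point. The obstruction is overcome by pairing the finite-cyclic observation $\mathrm{Fix}(\bar{x}^{-m})=\mathrm{Fix}(\bar{x})$ with the translation-length lower bound for products of elliptic isometries with disjoint fixed sets; the remaining steps are then formal.
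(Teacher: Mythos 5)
Your proposal is correct, and the overall strategy---show $\mathrm{Fix}(a)$, $\mathrm{Fix}(b)$, $\mathrm{Fix}(c)$ pairwise intersect and then apply the Helly property for subtrees---coincides with the paper's. You differ in how you establish that each dihedral subgroup is elliptic: you run the full case analysis of Lemma~\ref{dihedr} and dispatch its Case~(3) by the translation-length bound, noting that $\bar a=\bar x^{-m}\bar s$ would then be a product of two elliptics with disjoint nonempty fixed sets in $T_0$ (using $\gcd(m,2m+1)=1$ to identify $\mathrm{Fix}(\bar x^{-m})$ with $\mathrm{Fix}(\bar x)$), hence hyperbolic on that invariant subtree, contradicting ellipticity of $a$. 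The paper instead uses Lemma~\ref{dihedr} only to rule out $x^{2m+1}$ being hyperbolic, then passes to the subtree $\mathrm{Fix}(x^{2m+1})$ on which $a$, $b$, and $ab$ are each elliptic, and invokes Serre's criterion (which is proved by the same elliptic-product fact) to conclude $\langle a,b\rangle$ is elliptic. So the two routes rest on the same underlying tree-theoretic input; yours engages the case structure of Lemma~\ref{dihedr} more explicitly, the paper's bypasses Cases~(2)--(3) by working directly on the invariant subtree. One small point both you and the paper leave tacit when ruling out Case~(1): one should observe that an elliptic $a$ acting by translation on the invariant line must act as the identity there, and likewise $b$, so that $ab$ would fix the line pointwise, contradicting that $ab$ is hyperbolic with that line as axis. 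This is the step that upgrades ``$\langle a,b\rangle$ acts by translations on $\ell$'' to ``$a$ is hyperbolic.''
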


\begin{proof}
Since $a,b,c$ are conjugate to each other, we know that $b,c$ have fixed
points as well.  Note that $x ^{2m+1}=(ab)^{2m+1}$ is in the center of the
subgroup $\langle a,b\rangle .$ If the minimal set $\mathrm{Min}(x^{2m+1}) \neq \mathrm{Fix} (x^{2m+1})$ is a geodesic line $l=\mathbb{R},$ the group $\langle
a,b\rangle $ acts on $l=\mathbb{R}$ by translations according to Lemma \ref%
{dihedr} \ref{lem-dihedr-1} However, the nonvanishing of the translation length of $x^{2m+1}$ implies that $a$ acts without fixed points, which is a
contradiction to the assumption that $a$ has a fixed point. Therefore, we
have $\mathrm{Min}(x^{2m+1})=\mathrm{Fix}(x^{2m+1}),$ a
subtree on which $\langle a,b\rangle /\langle x^{2m+1}\rangle
\cong \mathbb{Z}/(2m+1)\ast \mathbb{Z}/2$ (generated by the images of $x=ab$ and $%
s=(ab)^{m}a$) acts invariantly. Note that elements $a=x^{-m}((ab)^{m}a), b=b(ab)^{m} x^{-m}$ have nonempty fixed point sets in $T$, thus it must have bounded orbit for the action on the subtree $\mathrm{Fix}(x^{2m+1})$ which implies they must have fixed points in $\mathrm{Fix}(x^{2m+1})$ \cite[Proposition II.6.7]{BH99}. Since the  $ab =x$  in the group $\langle a,b\rangle /\langle x^{2m+1}\rangle$ is of finite order, the elements $a,b,ab$ all have non-empty fixed point sets in $\mathrm{Fix}(x^{2m+1})$.  The Helly theorem then
implies that the whole subgroup $\langle a,b\rangle $ has a fixed point (see for example 
\cite[p.64, Proposition 26]{Se03}). A similar argument shows that the
subgroups $\langle a,c\rangle ,\langle c,b\rangle $ have fixed points as
well. Therefore, the whole Artin group $\mathrm{Art}_{2,2n+1,2m+1}$ has a fixed point
by applying the Helly theorem again.
\end{proof}

\begin{lemma}\label{abhasnofixed}
Suppose that $\mathrm{Art}_{2,2n+1,2m+1}$ acts on a tree $T$ with the element $ab$ or $%
cb$ having no fixed points. There exists a geodesic line $l$ in $T$ on which 
$\mathrm{Art}_{2,2n+1,2m+1}$ acts by translations.
\end{lemma}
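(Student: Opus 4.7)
The plan is to treat the case $\mathrm{Fix}(ab) = \emptyset$ in detail; the case $\mathrm{Fix}(cb) = \emptyset$ follows by the symmetry of the presentation that exchanges $a \leftrightarrow c$ and $m \leftrightarrow n$. The aim is to exhibit a geodesic line $l \subseteq T$ stabilized by $\mathrm{Art}_{2,2n+1,2m+1}$ on which every generator, and hence every element, acts as a translation.

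First I would show each generator $a$, $b$, $c$ is hyperbolic. If $\mathrm{Fix}(a) \neq \emptyset$, then Lemma \ref{lem2} would force a global fixed point, contradicting $\mathrm{Fix}(ab) = \emptyset$. From the braid relations one reads off $b = x^{m} a x^{-m}$ for $x = ab$ and $c = y^{-n} b y^{n}$ for $y = cb$, so $a$, $b$, $c$ are pairwise conjugate; hence none of them fixes a point of $T$.

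Next I would apply Lemma \ref{dihedr}\ref{lem-dihedr-1} to the dihedral subgroup $\langle a, b\rangle \cong A_{I_{2m+1}}$, whose generator $ab$ is hyperbolic by hypothesis: this yields a geodesic line $l \subseteq T$ on which $\langle a, b\rangle$ acts by translations, and in particular $\mathrm{Min}(a) = \mathrm{Min}(b) = l$. Since $ac = ca$, the isometry $c$ preserves $\mathrm{Min}(a) = l$ and therefore acts on $l \cong \mathbb{R}$ either as a translation or as a reflection. A reflection would pin down a fixed point of $c$ on $l$, contradicting the previous step, so $c$ must act on $l$ by a translation; this translation is nontrivial because $\mathrm{Fix}(c) = \emptyset$.

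All three generators then act on $l$ as translations, and since the translation subgroup of $\mathrm{Isom}(\mathbb{R})$ is closed under products and inverses, every element of $\mathrm{Art}_{2,2n+1,2m+1}$ does as well. The one slightly delicate point I anticipate is the clean elimination of the reflection case for $c$; it works only because uniform hyperbolicity of all three generators, not merely of $ab$, has been established beforehand, which in turn uses Lemma \ref{lem2} together with the conjugacy of $a$, $b$, $c$ extracted from the two braid relations.
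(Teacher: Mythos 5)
Your proof is correct and follows essentially the same path as the paper's: apply Lemma \ref{dihedr}\ref{lem-dihedr-1} to $\langle a,b\rangle$ to obtain a geodesic line $l$ on which the dihedral subgroup acts by translations, use $ac=ca$ to see that $c$ preserves $l$, and then rule out $c$ acting as a reflection on $l$. The one small variation is how you exclude the reflection: you first invoke Lemma \ref{lem2} (if $a$ were elliptic, the whole group would fix a point, contradicting $\mathrm{Fix}(ab)=\emptyset$) to get hyperbolicity of $a$, $b$, $c$ up front, whereas the paper leans directly on the conjugacy $b\sim c$ together with the fact that the dihedral analysis already makes $b$ act on $l$ by a nontrivial translation (hence hyperbolic). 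Both routes close the same gap; yours spells out explicitly the uniform hyperbolicity of the generators, while the paper's is terser and leaves it implicit that $b$'s translation on $l$ is nonzero.
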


\begin{proof}
Assuming that $ab$ has no fixed points, the central element $(ab)^{2m+1}$ of $%
A_{I_{2m+1}}=\langle a,b:(ab)^{m}a=b(ab)^{m}\rangle$
has its minimal set a geodesic line $l=\mathbb{R}$, on which the group $%
A_{I_{2m+1}}$ acts by translations
according to Lemma \ref{dihedr} \ref{lem-dihedr-1} Since $ac=ca,$ the element $c$ acts
invariantly on $l$ as well. But since $b,c$ are conjugate,
we know that $c$ acts on $l$ by translations. Since the group $\mathrm{Art}_{2,2n+1,2m+1}$ is generated by $a,b,c$, the claim is proved.
\end{proof}

\begin{theorem}\label{thm-tree-action-clf}
\label{oddtree}Suppose that $\mathrm{Art}_{2,2n+1,2m+1}$ acts on a simplicial tree $T$
minimally (i.e.~there are no proper $\mathrm{Art}_{2,2n+1,2m+1}$-invariant subtrees). We have one of the
following.

\begin{enumerate}[label=\arabic*).] 
\item  If $\mathrm{Fix}(a)\neq \emptyset$, the tree $T$ is a point.

\item  If $\mathrm{Fix}(a)=\emptyset $ and $\mathrm{Fix}(ab)=\emptyset $ (or $%
\mathrm{Fix}(cb)=\emptyset ),$ the tree $T$ is isometric to $\mathbb{R},$ on
which $A_{2,2n+1,2m+1}$ acts by translations.

\item  If $\mathrm{Fix}(a)=\emptyset $, $\mathrm{Fix}(ab)\neq \emptyset $ and $%
\mathrm{Fix}(cb)\neq \emptyset ,$ the tree  $T$ has a geodesic path $e=[p_{0},q_{0}]$ with vertex stabilizers 
\begin{eqnarray*}
G_{q_0} &\supset &\langle cb,ab\rangle , \\
G_{p_0} &\supset &\langle (cb)^{n}c,(ab)^{m}a \rangle
\end{eqnarray*}%
and the path stabilizer 
\begin{equation*}
G_{e}\supset \langle
(ab)(cb)^{-1},(cb)^{-n}(ab)^{m},(cb)^{n}(ab)^{-m},(cb)^{2n+1},(ab)^{2m+1}%
\rangle.
\end{equation*}
\end{enumerate}
\end{theorem}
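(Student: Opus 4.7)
Cases 1 and 2 follow almost immediately from Lemmas \ref{lem2} and \ref{abhasnofixed}: the global fixed point of Case 1 combined with minimality forces $T$ to be a point, while in Case 2 the invariant translation line supplied by Lemma \ref{abhasnofixed} must equal all of $T$ by minimality. Case 3 is the substantive case.

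For Case 3, I begin by observing that $b$ and $c$ are also hyperbolic (being conjugates of $a$ via $(ab)^m$ and $(bc)^n(ab)^m$ using the braid relations), so $\langle a,c\rangle \cong \mathbb{Z}^2$ is generated by commuting hyperbolic isometries; a standard fact for tree actions says they share a common axis $l$ and translate on it by the same length $\ell>0$. The crux of the argument is showing $\mathrm{Fix}(ab)\cap \mathrm{Fix}(cb)\neq \emptyset$, from which $q_0$ is chosen. I would establish this by proving $(ab)(cb)^{-1}=ac^{-1}$ is elliptic, arguing by contradiction: if $ac^{-1}$ were hyperbolic then $a,c$ would translate $l$ in opposite directions, and the standard bridge analysis for two elliptics with hyperbolic quotient would produce points $u\in \mathrm{Fix}(ab)\cap l$ and $v\in \mathrm{Fix}(cb)\cap l$ with $d(u,v)=\ell$ lying on the axis of $ac^{-1}$. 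Combining the identities $bu=a^{-1}u$ and $bv=c^{-1}v$ with the opposite-direction translation would force $b$ to swap $u$ and $v$, so that $b^2$ fixes $u$, contradicting the hyperbolicity of $b$. Hence $ac^{-1}$ is elliptic, and the classical fact that two elliptic isometries whose quotient is elliptic share a common fixed point yields $q_0\in \mathrm{Fix}(ab)\cap \mathrm{Fix}(cb)$, giving $G_{q_0}\supseteq \langle ab,cb\rangle$.

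To produce $p_0$, I use the rewritten forms $s:=(ab)^ma=b(ab)^m$ and $t:=(cb)^nc=b(cb)^n$ together with $abq_0=cbq_0=q_0$ to compute $sq_0=bq_0=tq_0$, and then set $p_0$ to be the midpoint of the segment $[q_0,bq_0]$. Lemma \ref{dihedr}(3) applied to each dihedral subgroup $\langle a,b\rangle$ and $\langle b,c\rangle$ provides invariant subtrees on which $s^2=(ab)^{2m+1}$ and $t^2=(cb)^{2n+1}$ act trivially; since $q_0$ and $bq_0$ lie in these subtrees (as $q_0\in \mathrm{Fix}(ab)\cap \mathrm{Fix}(cb)$ and $a,c$ commute with the respective centers), $s$ and $t$ act as involutions on the relevant subtrees and therefore fix the midpoint of any segment they swap; hence both fix $p_0$, giving $G_{p_0}\supseteq \langle (ab)^ma,(cb)^nc\rangle$.

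The edge stabilizer containment for $e=[p_0,q_0]$ is verified generator by generator. The centers $(ab)^{2m+1}$ and $(cb)^{2n+1}$ fix both endpoints. The element $ac^{-1}$ fixes $q_0$ and, commuting with $a^{-1}$, also fixes $bq_0=a^{-1}q_0$, hence fixes the whole segment and so $p_0$. The remaining generators $(cb)^{-n}(ab)^m$ and $(cb)^n(ab)^{-m}$ are verified from the identities $(ab)^mp_0=b^{-1}p_0=(cb)^np_0$ (rearranging $sp_0=tp_0=p_0$) and $(ab)^{-m}p_0=ap_0=cp_0=(cb)^{-n}p_0$, where the equality $ap_0=cp_0$ follows from the symmetry of the midpoint construction ($a$ and $c$ each send the segment $[q_0,bq_0]$ to $[q_0,aq_0]$ reversing orientation and so agree on the midpoint). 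The main obstacle is ruling out the opposite-direction translation of $a,c$ on $l$; this is where the specific algebraic structure (the commutativity $ac=ca$, the conjugacy of all three generators, and the hyperbolicity of $b$) conspires to force a consistent geometric picture and make the rest of the construction possible.
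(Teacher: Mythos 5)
Your proof is correct and takes a genuinely different route from the paper's in the substantive Case 3, so let me compare. The paper constructs the bridge points $p_{0}\in\mathrm{Fix}\big((ab)^{m}a\big)$, $q_{0}\in\mathrm{Fix}(ab)$ from the dihedral subgroup $\langle a,b\rangle$ and the analogous $p_{0}',q_{0}'$ from $\langle b,c\rangle$, then carries out a somewhat delicate analysis of the geodesic lines $\mathrm{Min}(a)$, $\mathrm{Min}(b)$, $\mathrm{Min}(c)$ (using $\mathrm{Min}(a)=\mathrm{Min}(c)$ from $ac=ca$, the equality $[q_{0},bq_{0}]=\mathrm{Min}(a)\cap\mathrm{Min}(b)=[q_{0}',bq_{0}']$, and ruling out $q_{0}=bq_{0}'$) to conclude $q_{0}=q_{0}'$ and $p_{0}=p_{0}'$. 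You instead go directly for the key fact $\mathrm{Fix}(ab)\cap\mathrm{Fix}(cb)\neq\emptyset$ by showing $ac^{-1}$ is elliptic: commuting hyperbolics $a,c$ share an axis $l$ with equal translation length $\ell$ (as they are conjugate); if $ac^{-1}$ were hyperbolic they would translate $l$ oppositely, Serre's lemma would put the bridge $[u,v]$ between $\mathrm{Fix}(ab)$ and $\mathrm{Fix}(cb)$ on $l$ with $d(u,v)=\ell$, and the identities $bu=a^{-1}u$, $bv=c^{-1}v$ together with the isometry condition $d(bu,bv)=\ell$ force $b$ to swap $u$ and $v$, contradicting $b$ hyperbolic. (Both the ``distance'' and the ``swap'' sign-cases reduce to the swap; your reading is the one consistent with the isometry constraint.) After that, choosing $q_{0}$ in the intersection and $p_{0}$ as the midpoint of $[q_{0},bq_{0}]$ works exactly as you describe, since $(ab)^{m}a=b(ab)^{m}$ and $(cb)^{n}c=b(cb)^{n}$ each swap $q_{0}$ and $bq_{0}$ and hence fix the midpoint, and the edge-stabilizer containments all follow from the identities $(ab)^{m}p_{0}=b^{-1}p_{0}=(cb)^{n}p_{0}$, $(ab)^{-m}p_{0}=ap_{0}=cp_{0}=(cb)^{-n}p_{0}$ (the equality $ap_{0}=cp_{0}$ using $aq_{0}=cq_{0}$, a consequence of $ac=ca$ and $a^{-1}q_{0}=bq_{0}=c^{-1}q_{0}$). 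Your argument short-circuits the paper's detailed bookkeeping with the dihedral minimal sets and makes more transparent why commutativity of $a,c$ together with hyperbolicity of $b$ forces the existence of the common vertex $q_{0}$. One minor remark: the appeal to Lemma \ref{dihedr}(3) in producing $p_{0}$ is unnecessary — that $s=(ab)^{m}a$ and $t=(cb)^{n}c$ swap $q_{0}$ and $bq_{0}$ already forces them to fix the midpoint, with no need to invoke the invariant subtree structure.
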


\begin{remark}
    When $n,m\geq 2$, $\mathrm{Art}_{2,2n+1,2m+1}$ does split as an amalgamated product of  two free groups \cite[Corollary 2.9]{Jan21}. In particular, it acts minimally on a tree whose quotient is an edge.
\end{remark}

\begin{proof}[Proof of Theorem \ref{thm-tree-action-clf}]
The case of 1) is proved in Lemma \ref{lem2}, while 2) is proved in Lemma \ref{abhasnofixed}. It is enough now to consider the case when $ab,cb$
both have fixed points, but $a$ acts hyperbolically. Let $x=ab,y=cb,s_{1}=b(ab)^{m}=(ab)^{m}a,s_{2}=b(cb)^{n}=(cb)^{n}c.$  Note that  $x,s_1$ acts invariantly on the fixed point set $\mathrm{Fix}(x^{2m+1})$ and the fixed point sets $\mathrm{Fix}(x^{2m+1}),\mathrm{Fix}(s_1)$ are subtrees.

There are unique points $p_0\in \mathrm{Fix}((ab)^{m}a),q_0\in
\mathrm{Fix}(ab) $ such that $$d(p_0,q_0)=\min \{d(p,q):p\in \mathrm{Fix}((s_1),q\in
\mathrm{Fix}(x)\}.$$ 
By the choice of $p_{0},q_{0},$ we have that $[s_1q_{0},p_{0}]\cap \lbrack
q_{0},p_{0}]=\{p_{0}\}$. Indeed, any $p\in \lbrack s_1q_{0},p_{0}]\cap
\lbrack q_{0},p_{0}]$ must be fixed by $s_1$ and $d(p,q_{0})\leq d(p_{0},q_{0})$, thus $p=p_0$ by the minimality of $d(p_0,q_0)$. 
Similarly, we have 
$[q_{0},xp_{0}]\cap [q_{0},p_{0}]=\{q_{0}\}.$ In fact,  we even have $[q_{0},x^{-m} p_{0}]\cap [q_{0},p_{0}]=\{q_{0}\}$.  In more detail, $x^{2m+1} p_0 =(ab)^{2m+1} p_0= ((ab)^ma)^2 p_0 = p_0$ and the integers $m,2m+1$ are coprime. Now   for any point $p$ that lies in the intersection $[q_{0},x^{-m} p_{0}]\cap [q_{0},p_{0}]$, we have $x^{-m} p =p$ and $x p = x^{um+v(2m+1)} p =p$ for some integers $u,v$ as $m, 2m+1$ are coprime. This implies that $p$ lies in the $\mathrm{Fix}(x)$. By the choice of $q_0,$ we have $p=q_0.$

Furthermore, we have $$[ap_0,p_0]= [x^{-m} s_1 p_0, p_0]=[x^{-m} p_0, p_0]= [x^{-m} p_0,q_0] \cup [q_0, p_0].$$ Similarly, we have
\begin{equation}
    \begin{split}
      [a^{-1}p_0,p_0] &= [s_1^{-1} x^{m} p_0, p_0] =[s_1^{-1} x^{m} p_0, s_1^{-1}q_0] \cup [s_1^{-1}q_0, p_0],  \\
      [ bp_{0},p_{0}] &=[s_{1}x^{-m}p_{0},p_{0}] =[s_{1}x^{-m}p_{0},s_{1}q_{0}]\cup [
s_{1}q_{0},p_{0}], \\
[ b^{-1}p_{0},p_{0}] &=[x^{m}s_1^{-1}p_{0},p_{0}] =[x^{m}p_{0},q_{0}]\cup
[ q_{0},p_{0}].
    \end{split}
\end{equation}

This implies that the geodesic segments $[a p_0, a^{-1} p_0 ]=[a p_0,p_0] \cup [p_0, a^{-1} p_0], [ bp_{0},b^{-1}p_{0}] = [bp_{0},p_{0}]\cup [p_{0},b^{-1}p_{0}].$

Therefore, $$\cup _{i\in \mathbb{Z}}[a^{i}p_0,a^{i+1}p_0]= \mathrm{Min}(a), \cup _{i\in \mathbb{Z}}[b^{i}p_0,b^{i+1}p_0]= \mathrm{Min}(b),%
$$ are geodesic lines, see Figure \ref{actiontree}. Indeed, it follows from the observation that $[p_0,bp_0]\cap [b^{-1}p_0,p_0] =p_0$ and  $[p_0,ap_0]\cap [a^{-1}p_0,p_0] =p_0$. 
Similarly, for the dihedral Artin group $\langle b,c\rangle ,$ we have
geodesic lines $$\cup _{i\in \mathbb{Z}}[b^{i}p_0^{\prime},b^{i+1}p_0^{\prime}]= \mathrm{Min}(b),%
\cup _{i\in \mathbb{Z}}[c^{i}p_0^{\prime},c^{i+1}p_0^{\prime}]= \mathrm{Min}(c)$$ for unique points $p_0^{\prime}\in \mathrm{Fix}((cb)^{n}c),q_0^{\prime}\in \mathrm{Fix}(cb)$ such
that $$d(p_0^{\prime },q_0^{\prime })=\min \{d(x,y):p'\in
\mathrm{Fix}((cb)^{n}c),q'\in \mathrm{Fix}(cb)\}.$$ Since $ac=ca,$ we have $\mathrm{Min}(a)=\mathrm{Min}(c).$ By
the uniqueness of $\mathrm{Min}(b),$ we see from Figure \ref{actiontree} that $$[q_0,bq_0]=\mathrm{Min}(a)\cap
\mathrm{Min}(b)=\mathrm{Min}(c)\cap\mathrm{Min}(b)=[q_0^{\prime },bq_0^{\prime }].$$ If $q_0= b q_0^{\prime },$ we must have $b q_0 = q_0^{\prime }$ and $b^2 q_0 = q_0$, which is a contradiction to the assumption that $a,b$ are hyperbolic. Therefore, we have $q_0 = q_0^{\prime }$. Note that $%
p_0$ is the middle point of $[q_0,bq_0]$ and $p_0^{\prime }$ is the
middle point of $[q_0^{\prime },bq_0^{\prime }].$ Therefore, we have $%
p_0=p_0^{\prime }.$ Now since $a,b,c$ are conjugate, they must have the same translation length. But $a,b$ (resp. $b,c$) move $p_0$ to opposite directions, we deduce that the actions of $a,c$ on $\mathrm{Min}(a)$ are the same. 

 We have $x,y\in G_{q_0}.$
Since $xy^{-1}=ac^{-1}$ and $a,c$ have the same action on the minimal set, we have that $%
xy^{-1}p_0=p_0$ since $p_0 \in \mathrm{Min}(a)$. Note that $b=s_{1}x^{-m}=s_{2}y^{-n}$ and $%
b^{-1}=x^{m}s_{1}^{-1}=y^{n}s_{2}^{-1}.$ We have $%
b^{-1}p_0=x^{m}p_0=y^{n}p_0$ and $y^{-n}x^{m}p_0=p_0.$
Furthermore, we have $a=x^{-m}s_{1},c=y^{-n}s_{2},$ implying that $%
x^{-m}p_0=y^{-n}p_0,y^{n}x^{-m}p_0=p_0$ from $ap_0=cp_0.$ From $%
x^{2m+1}=s_{1}^{2},y^{2n+1}=s_{2}^{2},$ we also know that $x^{2m+1},y^{2n+1}\in
G_{e}$ since they fix the endpoints $p_0$ and $q_0$ of $e$.
\end{proof}

\begin{corollary}
\label{5.7}
When $m\neq 3k+1$ for any integer $k$, any isometric action of the Artin group $\mathrm{Art}_{2,3,2m+1}$ on
a simplicial tree either has a global fixed point or has an invariant
geodesic line. In particular, the Artin group $\mathrm{Art}_{2,3,2m+1}$ does not split
as a nontrivial graph of free groups when $m\neq 3k+1$.
\end{corollary}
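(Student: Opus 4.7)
The plan is to derive the first assertion from Theorem~\ref{thm-tree-action-clf} with $n=1$ by eliminating Case~(3) using relations among the edge-stabilizer generators, and then to rule out the invariant-line scenario for a graph-of-free-groups splitting by appealing to Theorem~\ref{th2}.

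For the trichotomy, given an isometric action of $\mathrm{Art}_{2,3,2m+1}$ on a simplicial tree $T$, I would first pass to a minimal invariant subtree (absent a global fixed point, such a subtree exists by standard Bass--Serre arguments for finitely generated groups), and apply Theorem~\ref{thm-tree-action-clf}. Cases~(1) and~(2) immediately supply a global fixed point or an invariant geodesic line, so the heart of the proof is to eliminate Case~(3). Setting $x=ab$ and $y=cb$, the edge stabilizer $G_e$ of $e=[p_0,q_0]$ contains $xy^{-1}$, $y^{-1}x^{m}$, $yx^{-m}$, $y^{3}$, and $x^{2m+1}$. Working with right cosets of $G_e$, the relation $xy^{-1}\in G_e$ gives $G_e x = G_e y$, and $yx^{-m}\in G_e$ gives $G_e y = G_e x^{m}$; combining yields $G_e x = G_e x^{m}$, hence $x^{m-1}\in G_e$. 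Together with $x^{2m+1}\in G_e$, the set $\{k\in\mathbb{Z}: x^{k}\in G_e\}$ is a subgroup of $\mathbb{Z}$ containing $\gcd(m-1, 2m+1)=\gcd(m-1,3)$, which equals $1$ exactly when $m\neq 3k+1$. Thus $x\in G_e$, so in particular $xp_0=p_0$; but the identity $[q_0, xp_0]\cap [q_0, p_0]=\{q_0\}$ established in the proof of Theorem~\ref{thm-tree-action-clf} forces $xp_0\neq p_0$ (since $p_0\neq q_0$ in Case~(3)). This contradiction rules out Case~(3) and completes the first assertion.

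For the ``In particular'' statement, suppose $\mathrm{Art}_{2,3,2m+1}$ admits a nontrivial graph-of-free-groups splitting. Its action on the associated Bass--Serre tree is minimal and has no global fixed point, so the first assertion forces an invariant geodesic line, and by minimality the tree itself is $\mathbb{R}$ with the group acting by translations. This action factors through the abelianization $\mathrm{Art}_{2,3,2m+1}^{\mathrm{ab}}\cong\mathbb{Z}$, so every vertex stabilizer coincides with the commutator subgroup $G'$. Because $2,3,2m+1$ are pairwise coprime exactly when $m\neq 3k+1$, Theorem~\ref{th2} applies and $G'$ is perfect; since $\mathrm{Art}_{2,3,2m+1}$ is non-abelian, $G'$ is nontrivial, hence cannot be free, contradicting the assumed graph-of-free-groups splitting.

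The main delicate point is the coset computation producing $x^{m-1}\in G_e$: since $G_e$ is not a priori normal in any convenient overgroup, one cannot simply pass to a quotient group. I would instead argue purely at the level of right cosets of $G_e$ in $\mathrm{Art}_{2,3,2m+1}$, and then exploit that $\{k\in\mathbb{Z}:x^{k}\in G_e\}$ is automatically a subgroup of $\mathbb{Z}$ to extract a single generator of $G_e\cap\langle x\rangle$ via the Bezout-style gcd.
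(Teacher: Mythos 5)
Your proposal is correct and follows essentially the same approach as the paper: apply Theorem~\ref{thm-tree-action-clf}, use the edge-stabilizer relations to get $x^{m-1}\in G_e$ (the paper multiplies $ac^{-1}=(ab)(cb)^{-1}$ and $(cb)(ab)^{-m}$ inside $G_{p_0}$, which is the same computation), invoke $\gcd(m-1,2m+1)=\gcd(m-1,3)=1$ to conclude $x$ lies in the stabilizer, and derive a contradiction. Your final step differs cosmetically (you argue $xp_0=p_0$ forces $p_0=q_0$, while the paper concludes $a\in G_{p_0}$ and invokes Lemma~\ref{lem2}), and for the ``in particular'' you appeal to Theorem~\ref{th2} (perfect commutator subgroup) rather than the paper's Corollary~\ref{nonfree} (non-freeness of $\ker f$ via coherence); both routes are valid and self-contained for $m\neq 3k+1$.
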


\begin{proof}
Suppose first that there is a tree $T$ on which $\mathrm{Art}_{2,3,2m+1}$ acts minimally without fixed
points nor an invariant geodesic line. Apply Theorem \ref{oddtree} to our situation (recall that we are in the case $(2,2n+1,2m+1)$ for $2n+1 =3$),
there is a geodesic path $[p_0,q_0]$ with vertex
stabilizers 
\begin{equation*}
G_{q_0}\supset \langle cb,ab\rangle ,G_{p_0}\supset \langle
(cb)c,(ab)^{m}a,ac^{-1},(cb)^{-1}(ab)^{m},(cb)(ab)^{-m}\rangle .
\end{equation*}%
This implies that $(cb)^{-1}p_0=(ab)^{-m}p_0$ and $%
(cb)^{-1}p_0=(ab)^{-1}p_0,$ implying that $(ab)^{m-1}p_0=p_0.$ Since 
$\gcd (m-1,2m+1)=\gcd (m-1,2(m-1)+3)=1$ when $m\neq 3k+1,$ there are integers $%
k_{1},k_{2}$ such that $1=k_{1}(m-1)+k_{2}(2m+1).$ Therefore, we have $%
(ab)p_0=(ab)^{(m-1)k_{1}}(ab)^{(2m+1)k_{2}}p_0=p_0.$ This implies that 
$ab\in G_{p_0}$ and $a=(ab)^{-m}(ab)^{m}a\in G_{p_0}.$ By Lemma \ref%
{lem2}, the action of $A$ on $T$ has a global fixed, which is a contradiction.

Suppose now that $\mathrm{Art}_{2,3,2m+1}$ splits as a nontrivial graph of free groups. Then the action of $\mathrm{Art}_{2,3,2m+1}$ on the corresponding Bass--Serre tree can not have a global fixed point since $\mathrm{Art}_{2,3,2m+1}$ is not a free group. Without loss of generality, we can further assume that the tree is minimal. Then such a tree must be a line. By Theorem \ref{thm-tree-action-clf}, $\mathrm{Art}_{2,3,2m+1}$ acts on the line by translations. Since the translations of the line form an abelian group, the action of $\mathrm{Art}_{2,3,2m+1}$ on the line factors through its abelianization. But then the stabilizer of a vertex in the line would be the commutator subgroup which is not free. We postpone the proof of this fact to the last section, see Theorem \ref{th2} or Proposition \ref{nonfree}. This again leads to a contradiction. 
\end{proof}

\begin{corollary}
\label{5.8}
When $m=3k+1$ for some integer $k$, any isometric action of the Artin group $\mathrm{Art}_{2,3,2m+1}$ on a
simplicial tree either has a global fixed point or has an invariant geodesic line or has a vertex stabilizer $G_{p_0} > \langle s_1= (ab)^{m}a, x^3=(ab)^3 \rangle$. In particular, $\mathrm{Art}_{2,3,6k+3}$ does not split as any
nontrivial graph of free groups.
\end{corollary}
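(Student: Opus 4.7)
The plan is to adapt the proof of Corollary \ref{5.7}, tracking the one step where the coprimality hypothesis fails. First I would assume $\mathrm{Art}_{2,3,2m+1}$ acts minimally on a simplicial tree $T$ without a global fixed point or an invariant geodesic line and invoke Theorem \ref{thm-tree-action-clf} to produce an edge $[p_0,q_0]$ whose stabilizer contains $(ab)(cb)^{-1}$ and $(cb)^{-1}(ab)^m$. As in Corollary \ref{5.7} these force $(ab)^{-1}p_0 = (cb)^{-1}p_0 = (ab)^{-m}p_0$, so $(ab)^{m-1} p_0 = p_0$. The only departure is arithmetic: for $m = 3k+1$,
\begin{equation*}
\gcd(m-1,\, 2m+1) \;=\; \gcd(3k,\, 3(2k+1)) \;=\; 3,
\end{equation*}
so from $x^{m-1}, x^{2m+1} \in G_{p_0}$ one only extracts $x^3 = (ab)^3 \in G_{p_0}$. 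Together with $s_1 = (ab)^m a \in G_{p_0}$, already supplied by case 3 of Theorem \ref{thm-tree-action-clf}, this proves the trichotomy.

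For the ``In particular'' statement I would assume, toward a contradiction, that $\mathrm{Art}_{2,3,6k+3}$ splits nontrivially as a graph of free groups and consider the action on the associated Bass--Serre tree. This action is minimal and has no global fixed point since the group is non-free, so by the trichotomy just proved we land in the invariant-line case or in the edge case. The invariant-line case is handled verbatim as in Corollary \ref{5.7}: the translations of $\mathbb{R}$ form an abelian group, so the action factors through the abelianization and a vertex stabilizer equals the commutator subgroup, which is shown to be non-free in the last section (Corollary \ref{nonfree}). In the edge case it suffices to prove that $\langle s_1, x^3\rangle$ is not free, since subgroups of free groups are free.

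The heart of the argument is therefore to identify $\langle x^3, s_1\rangle$. Using Fact \ref{fact-dih-amalg}, the dihedral Artin subgroup $\langle a,b\rangle$ splits as $\langle x\rangle \ast_{\langle x^{2m+1}\rangle} \langle s_1\rangle$ with $x^{2m+1} = s_1^2$. Since $3 \mid 2m+1 = 3(2k+1)$, the amalgamated subgroup $\langle x^{2m+1}\rangle$ lies in $\langle x^3\rangle$, and Bass--Serre theory applied to the action of $\langle x^3, s_1\rangle$ on the Bass--Serre tree of $\langle a,b\rangle$ yields
\begin{equation*}
\langle x^3, s_1\rangle \;\cong\; \langle x^3\rangle \ast_{\langle x^{2m+1}\rangle} \langle s_1\rangle \;\cong\; \langle u, v \mid u^{2k+1} = v^2\rangle,
\end{equation*}
the $(2k+1,2)$-torus knot group. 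For $k \geq 1$ this group is torsion-free, non-cyclic, and has nontrivial infinite cyclic center generated by $u^{2k+1}$, so it is not free---a contradiction. The degenerate case $k=0$ corresponds to $\mathrm{Art}_{2,3,3}$, which is of finite type and geometric dimension $3$ and is excluded by the remark after Theorem \ref{th1}. The step requiring care is the Bass--Serre identification of $\langle x^3, s_1\rangle$, but this is routine once one has the amalgamated structure of the dihedral Artin subgroup in hand.
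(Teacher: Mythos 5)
Your proof is correct and follows the same overall strategy as the paper: invoke Theorem \ref{thm-tree-action-clf} to produce the edge $[p_0,q_0]$, extract $x^3\in G_{p_0}$ from the stabilizer generators, observe $s_1=(ab)^ma\in G_{p_0}$, and then argue that $\langle x^3,s_1\rangle$ inside the odd dihedral Artin subgroup is not free, handling $k=0$ separately. Two small differences are worth noting. First, for extracting $x^3$ you run the gcd argument of Corollary \ref{5.7} verbatim (using $x^{m-1},x^{2m+1}\in G_{p_0}$ and $\gcd(m-1,2m+1)=3$), while the paper produces $x^{-3k}=xy^{-1}(yx^{-3k-1})\in G_e$ directly and then multiplies by $x^{6k+3}$; both are correct. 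Second, and more substantively, for non-freeness the paper simply notes that $\langle x^3,s_1\rangle$ satisfies the nontrivial relation $s_1^2=(x^3)^{2k+1}$, which by itself does not exclude $\langle x^3,s_1\rangle$ from being cyclic and hence free; you instead identify $\langle x^3,s_1\rangle$ precisely as $\langle x^3\rangle\ast_{\langle x^{2m+1}\rangle}\langle s_1\rangle\cong\langle u,v\mid u^{2k+1}=v^2\rangle$ (which is correct, and can be seen from normal forms since $\langle x^{2m+1}\rangle\leq\langle x^3\rangle$ because $3\mid 2m+1$), and then check it is torsion-free with nontrivial center and non-cyclic. This is a more complete justification of the key non-freeness step than the paper offers, at the cost of the Bass--Serre identification which you correctly flag as the step requiring care.
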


\begin{proof}
Suppose that the group $A_{2,3,2m+1}$ acts on a tree $T$ and the action is
minimal without fixed points or invariant geodesic lines. Theorem \ref%
{oddtree} implies that there is a geodesic path $e=[q_0,p_0]$ such
that 
\begin{equation*}
G_{e}\supset \langle xy^{-1},y^{-1}x^{3k+1},yx^{-3k-1},y^{3},x^{6k+3}\rangle 
\end{equation*}%
for $x=ab,y=cb.$ Note that $x^{-3k}=xy^{-1}(yx^{-3k-1})\in G_{e}.$
Therefore, the element $x^3= x^{6k+3 - 6k} \in G_e$.  However, in the stabilizer $G_{p_0}$, we have elements $s_1= b(ab)^m,x^3 $ lying in the dihedral group $\langle a,b \rangle$. Now  by Fact \ref{fact-dih-amalg}, $\langle a,b \rangle$ has the amalgamated product structure $ \langle s_1,x \mid x^{6k+3} = s_1^2\rangle$. This implies that the subgroup generated by $x^3$ and $s_1$ is not a free group, in fact it satisfies a nontrivial relation $s_1^2 = x^{2m+1}= x^{6k+3} = (x^3)^{2k+1}.$ This proves that $G_{p_0}$ is not a free group when $k>0$. When $k=0$, the Artin group $\mathrm{Art}_{2,3,6k+3}=\mathrm{Art}_{2,3,3}$ is of finite type, which is not isomorphic to a graph of free groups (see \cite[Corollary 2.11]{Jan21}).  Therefore, the group $\mathrm{Art}_{2,3,6k+3}$ does not split as any
nontrivial graph of free groups.
\end{proof}

\begin{remark}
Each case in Corollary \ref{5.8} can happen. The triangle Artin group $\mathrm{Art}_{2,3,6k+3}=\langle
a,b,c \mid ac=ca,bcb=cbc,(ab)^{3k+1}a=b(ab)^{3k+1}\rangle$ has an epimorphism onto $%
\mathbb{Z}/3\ast \mathbb{Z}/2$ explicitly given as the following. Using the
notation $x=ab,y=cb,s_{1}=(ab)^{3k+1}a,s_{2}=cbc,$ the group  
\begin{equation*}
\mathrm{Art}_{2,3,6k+3}=\langle x,y,s_{1},s_{2}\mid
x^{6k+3}=s_{1}^{2},y^{3}=s_{2}^{2},
s_{1}y^{-1}=x^{3k+1}y^{-1}s_{2}x^{-3k-1}s_{1}s_{2}^{-1}, s_2^{-1} s_1 = y^{-1} x^{3k+1}\rangle .
\end{equation*}%
Suppose that $\mathbb{Z}/3=\langle u\rangle ,\mathbb{Z}/2=\langle v\rangle $
for generators $u,v.$ The map $f:\mathrm{Art}_{2,3,6k+3}\rightarrow \mathbb{Z%
}/3\ast \mathbb{Z}/2$ defined by%
\begin{eqnarray*}
x,y &\longmapsto &u, \\
s_{1},s_{2} &\longmapsto &v,
\end{eqnarray*}%
is a surjective homomorphism.
\end{remark}

\bigskip
\begin{proof}[Proof of Theorem \protect\ref{th1}]
When $M$ is odd, the Artin group $\mathrm{Art}_{23M}$ does not split as a graph of free groups by Corollary \ref{5.7} and Corollary \ref{5.8}.
When $M>4$ is even, the group $\mathrm{Art}_{23M}$ is isomorphic to the amalgamated product $F_3 *_{F_7} F_4$ by Theorem \ref{23even}.

\end{proof}

\section{Poly-free, algebraically clean and thin}

In this section, we prove some useful lemmas related to poly-freeness preparing our proof of Theorem \ref{th3}.

We start with a basic fact on poly-freeness, which is in a similar flavor as Lemma \ref{2.6}. A residual-finite version was obtained by Wise \cite{Wi02}.

\begin{lemma}
\label{key}Let $G=F\ast _{H_{i}^{t_{i}}=H_{i}^{\prime }}$ be a multiple HNN
extension of a group $F$ where for each $i$, the stable letter $t_{i}$
conjugates the subgroup $H_{i}$ to the subgroup $H_{i}^{\prime }.$ Then $G$
is poly-free (resp. normally poly-free) if

1) $F$ is poly-free (resp. free);

2) Each isomorphism $H_{i}\rightarrow H_{i}^{\prime }$ given by $t_{i}$ can
be extended to an automorphism $\phi _{i}:F\rightarrow F.$

\end{lemma}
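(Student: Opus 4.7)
The strategy is to realize $G$ as an extension of an explicit (normally) poly-free group by a free kernel, using the automorphism extensions $\phi_i$ to build the comparison map. Let $F_{|I|}$ denote the free group on symbols $\{t_i\}_{i\in I}$, and let $F_{|I|}$ act on $F$ via $t_i \cdot f := \phi_i(f)$; form the semidirect product $\widetilde G := F \rtimes F_{|I|}$. Define a surjection
\[
f: G \twoheadrightarrow \widetilde G, \qquad f|_F = \mathrm{id}_F, \quad f(t_i) = t_i.
\]
This map is well-defined because the defining HNN relation $t_i h t_i^{-1} = \phi_i(h)$ for $h \in H_i$ in $G$ is a special case of the semidirect-product relation $t_i x t_i^{-1} = \phi_i(x)$ that holds in $\widetilde G$ for every $x \in F$. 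Moreover, $\widetilde G$ is poly-free (respectively normally poly-free, when $F$ is free) via the short exact sequence $1 \to F \to \widetilde G \to F_{|I|} \to 1$.

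The central step is to show that $\ker f$ is a free group. Consider the Bass--Serre tree $T$ associated to the HNN decomposition of $G$: the group $G$ acts on $T$ with vertex stabilizers conjugate to $F$ and edge stabilizers conjugate to the various $H_i$. If $x \in \ker f$ lies in a vertex stabilizer $gFg^{-1}$, write $x = g y g^{-1}$ with $y \in F$; then $f(x) = f(g)\, y\, f(g)^{-1} = 1$ forces $y = 1$, so $x = 1$. The analogous argument for $x \in \ker f \cap g H_i g^{-1}$, writing $x = g h g^{-1}$ with $h \in H_i \subset F$, shows $\ker f$ meets every edge stabilizer trivially. After passing to the barycentric subdivision if necessary to remove inversions, $\ker f$ acts freely on a tree and is therefore a free group.

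To assemble the subnormal series, start from a poly-free series $1 \trianglelefteq F_1 \trianglelefteq \cdots \trianglelefteq F_n = F$ and take preimages under $f$ to obtain
\[
1 \trianglelefteq \ker f \trianglelefteq f^{-1}(F_1) \trianglelefteq \cdots \trianglelefteq f^{-1}(F_n) \trianglelefteq G,
\]
whose successive quotients are $\ker f$, the $F_i/F_{i-1}$, and $F_{|I|} \cong G/f^{-1}(F_n)$, each free. When $F$ itself is free, take the trivial series $1 \trianglelefteq F$; since $F$ is normal in $\widetilde G$, the middle term $f^{-1}(F) = \ker(G \to F_{|I|})$ is normal in $G$, so the resulting three-step series consists of subgroups all normal in $G$, giving normal poly-freeness.

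The principal obstacle is the free-action claim for $\ker f$ on the Bass--Serre tree; once this is in hand, everything else is formal manipulation of short exact sequences. The free-action itself reduces cleanly to the observation that $f|_F = \mathrm{id}_F$, so $f$ is injective on every $G$-conjugate of $F$ and of $H_i$. This is exactly the content of hypothesis~(2), which is what forces $f$ to be well-defined in the first place.
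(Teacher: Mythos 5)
Your proof is correct and follows essentially the same route as the paper: construct the semidirect product $F \rtimes F_{|I|}$ from the automorphism extensions $\phi_i$, map $G$ onto it, observe that the kernel acts freely on the Bass--Serre tree (since $f$ restricts to the identity on $F$, hence is injective on every vertex and edge stabilizer), and conclude that $G$ is free-by-(poly-free). The paper cites its Lemma~\ref{2.6} and the closure of poly-free groups under extensions to finish, whereas you assemble the subnormal series by taking preimages explicitly, but the underlying argument is identical.
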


\begin{proof}
Let $F_{I}$ be the free group generated by free generators $x_i$ for all the index $i \in I$ and $F\rtimes
F_{I}$ be the semi-direct product given by the $\phi _{i}$'s. We have an obvious
homomorphism%
\begin{equation*}
f:G=F\ast _{H_{i}^{t_{i}}=H_{i}^{\prime }}\rightarrow F\rtimes F_{I},
\end{equation*}%
mapping $t_i$ to $x_i$ and $F$ to $F$. The $\ker f$ as a subgroup of $G$ acts on the Bass--Serre tree of $G$ associated with the multiple HNN extension.  The action has trivial
vertex stabilizers. Therefore, $\ker f$ is a free group. By Lemma \ref{2.6}, we know that $G$ is poly-free.

The above proof shows that there is an exact sequence%
\begin{equation*}
1\rightarrow K\rightarrow G\rightarrow F\rtimes F_{I}\rightarrow 1,
\end{equation*}%
where $K$ is free. Therefore, $G$ is normally poly-free when $F$ is free.
\end{proof}

\begin{definition}
    
A graph of groups is \emph{algebraically clean} provided that each vertex group is free
and each edge group is a free factor of its vertex groups. A splitting of a
group $G$ as a graph of groups $\Gamma _{G}$ is \emph{virtually algebraically clean}
provided that $G$ has a finite index subgroup $H$ whose induced splitting $%
\Gamma _{H}$ is algebraically clean.
\end{definition}

\begin{corollary}\label{cor-ac-pf}
\label{cor1}(algebraically clean $\Longrightarrow $ normally poly-free) Let $G$ split as a graph of
groups $\Gamma .$ Suppose that each vertex group is free and each edge group
is a free factor of its vertex groups. Then $G$ is normally poly-free.
\end{corollary}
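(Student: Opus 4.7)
The plan is to reduce the given graph-of-groups structure on $G$ to a multiple HNN extension of a single free group and then invoke Lemma \ref{key}. The key algebraic observation is that if $F_1 = H * A$ and $F_2 = H * B$ are free groups sharing a common free factor $H$, then $F_1 *_H F_2 \cong H * A * B$ is free, and both $F_1$ and $F_2$ sit inside as free factors. Combined with the transitivity of the ``is a free factor of'' relation (a free factor of a free factor is a free factor), this lets me iteratively collapse a spanning tree of the underlying graph while preserving the invariant that every remaining edge group is a free factor of its (enlarged) vertex group.

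First, I would fix a spanning tree $T$ of the underlying graph of $\Gamma$ and amalgamate the vertex groups along the edges of $T$ one leaf at a time. Each such amalgamation replaces two free vertex groups by a single free one in which the two originals appear as free factors; hence any edge group of a not-yet-collapsed edge, being a free factor of one of these originals, remains a free factor of the new vertex group. After $T$ is exhausted, $G$ is realized as a multiple HNN extension $\widetilde{F} *_{H_e^{t_e} = H'_e}$ of a single free group $\widetilde{F}$, and for each non-tree edge $e$ both images $H_e$ and $H'_e$ of the edge group are free factors of $\widetilde{F}$.

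Next, for each non-tree edge $e$, I would extend the isomorphism $\phi_e : H_e \to H'_e$ induced by the stable letter $t_e$ to an automorphism of $\widetilde{F}$ by writing $\widetilde{F} = H_e * K_e = H'_e * K'_e$ and noting that $K_e$ and $K'_e$ are free of the same rank (since $H_e \cong H'_e$ and rank is additive over free products, with cardinal arithmetic handling the infinitely generated case). Any isomorphism $K_e \to K'_e$ together with $\phi_e$ then determines such an extension. With these extensions in hand, Lemma \ref{key} applies directly, and since $\widetilde{F}$ is free it delivers the stronger conclusion that $G$ is normally poly-free. The main obstacle is simply bookkeeping: verifying that the free-factor invariant is preserved throughout the spanning-tree collapse, which I would handle purely formally via the transitivity observation above.
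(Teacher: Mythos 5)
Your overall route coincides with the paper's: collapse a spanning tree to realize $G$ as a multiple HNN extension of a single free group $\widetilde{F}$, extend the stable-letter isomorphisms to automorphisms of $\widetilde{F}$, and invoke Lemma \ref{key}. The tree-collapse and the free-factor bookkeeping are essentially the same; the paper writes out a free generating set directly, going outward from a base vertex, which is a bit cleaner than ``one leaf at a time'' when $T$ is infinite and has no leaves, but that is a cosmetic point.

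The genuine gap is in the parenthetical asserting that ``cardinal arithmetic handl[es] the infinitely generated case.'' Infinite cardinals do not cancel: from $\mathrm{rank}(H_e)+\mathrm{rank}(K_e)=\mathrm{rank}(\widetilde{F})=\mathrm{rank}(H'_e)+\mathrm{rank}(K'_e)$ together with $\mathrm{rank}(H_e)=\mathrm{rank}(H'_e)$ one cannot deduce $\mathrm{rank}(K_e)=\mathrm{rank}(K'_e)$, and the failure really does obstruct the step. For a concrete example, let $\widetilde{F}$ have basis $x_1,x_2,\dots$, take $H_e=\langle x_2,x_3,\dots\rangle$ with complement $K_e=\langle x_1\rangle$ of rank $1$, and take $H'_e=\langle x_1,x_3,x_5,\dots\rangle$ with complement $K'_e=\langle x_2,x_4,\dots\rangle$ of infinite rank. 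Then $H_e\cong H'_e$, but no automorphism $\Phi$ of $\widetilde{F}$ satisfies $\Phi(H_e)=H'_e$: abelianizing, $\Phi$ would induce an automorphism of $\bigoplus_{\mathbb{N}}\mathbb{Z}$ carrying a corank-$1$ direct summand onto a corank-$\aleph_0$ one, which is impossible. Since the corollary (as the remark immediately after it emphasizes) is meant to hold with no finiteness hypotheses on $\Gamma$ or the edge groups, this case must be addressed. The paper's fix is to enlarge the base: pass to $\widetilde{F}\ast\widetilde{F}$, inside which $H_e$ and $H'_e$ acquire complements of the same cardinality (roughly $\mathrm{rank}(K_e)+\mathrm{rank}(H_e)+\mathrm{rank}(K'_e)$ on both sides), so each $\phi_e$ does extend to an automorphism of the doubled group; this produces a multiple HNN extension into which $G$ embeds, Lemma \ref{key} gives normal poly-freeness for the larger group, and $G$ inherits it. If you are willing to assume all vertex groups are finitely generated, your argument is complete as written; without that hypothesis you need the doubling trick (or some substitute for it).
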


\begin{proof}
Choose a maximal tree $T$ of $\Gamma.$ The group $\ast _{v\in V(T)}G_{v}$
is free. Since each edge group is a free factor of its vertex groups, the graph of groups $%
\langle G_{v},v\in T\rangle $ over $T$ is free (if $T$ is infinite, start
the proof from a vertex $v_0$ and contracting edges nearby. Explicitly, for any vertex $v \neq v_0$, let $e_v$ be the unique edge pointing to $v_0$ and starting from $v$. Since $G_{e_{v}}$ is a free factor of $G_v$, we choose $S_v$ as a free generating set of the complement of $G_{e_{v}}$ in $G_v$. Let $S$ be a free-generating set of $G_{v_0}$. It can be directly checked that $S \cup_{v\in T \backslash v_0} S_{v} $ is a free generating set of $%
\langle G_{v},v\in T\rangle $). Note that $G$ is a
multiple HNN extension of $\langle G_{v},v\in T\rangle .$ For each edge $%
e\in \Gamma \backslash T$ connecting vertices $v,w\in T,$ we have $%
t_{e}G^*_{v}t_{e}^{-1}=G^*_{w},$ where $G^*_{v},G^*_{w}$ are free factors of $G_v,G_w$, respectively. Suppose that $$\langle G_{v},v\in T\rangle
=G^*_{v}\ast F_{V^{\prime }}=G^*_{w}\ast F_{W^{\prime }}$$ for free groups $%
G^*_{v}=F_{V},G^*_{w}=F_{W},$ $F_{V^{\prime }},F_{W^{\prime }},$ where $V,W,$ $%
V^{\prime },W^{\prime }$ are free generating sets. If $V^{\prime },W^{\prime
}$ have the same cardinality (for example when $G_{v}$ is finitely
generated), we have an obvious extension of $t$ to the whole free group $%
\langle G_{v},v\in T\rangle $. If $V^{\prime },W^{\prime }$ do not have the
same cardinality, consider the free product $$\langle G_{v},v\in T\rangle
\ast \langle G_{v},v\in T\rangle =G^*_{v}\ast F_{V^{\prime }}\ast G^*_{w}\ast
F_{W^{\prime }}=G^*_{w}\ast F_{W^{\prime }}\ast G^*_{v}\ast F_{V^{\prime }}.$$
Note that $V^{\prime }\cup W\cup W^{\prime }$ and $W^{\prime }\cup V\cup
V^{\prime }$ have the same cardinality. Therefore, the $t_{e}$ can be
extended to be an automorphism of $\langle G_{v},v\in T\rangle \ast \langle
G_{v},v\in T\rangle .$ We have obtained a multiple HNN extension $G^{\prime
}\ast _{t_{e}G^*_{v}t_{e}^{-1}=G^*_{w},e\in \Gamma \backslash T},$ into which
the group $G$ embeds. Lemma \ref{key} shows that $G^{\prime }\ast
_{t_{e}G^*_{v}t_{e}^{-1}=G^*_{w},e\in \Gamma \backslash T}$ hence $G$ are normally poly-free.
\end{proof}

\begin{remark}
We assume neither the graph $\Gamma $ is finite nor the edge groups are
finitely generated. The residual finiteness version of the corollary
is not true under this weaker assumption, as Baumslag constructs a
non-residually finite free-by-cyclic group $F_{\infty }\rtimes \mathbb{Z}.$
See Wise's remark \cite[Example 3.5]{Wi13}.
\end{remark}

An action of a group $G$ on a tree $T$ is thin provided that the stabilizer
of any embedded length 2 path in $T$ is finite. A graph of groups $\Gamma $
is thin if the action of $\pi _{1}(\Gamma) $ on the corresponding Bass-Serre tree is thin. Wise
\cite{Wi02} proves every thin finite graph of finitely generated virtually free
groups is virtually algebraically clean.

A subgroup $M<G$ is malnormal provided that for each $g\in G\backslash M$,
the intersection $M\cap gMg^{-1}$ is trivial. Similarly, $M$ is almost
malnormal provided that for each $g\in G\backslash M$ the intersection $%
M\cap gMg^{-1}$ is finite.

\begin{corollary}
\label{cor2}Let $G=A\ast _{M}B$ be an amalgamated free product, where $A$
and $B$ are virtually free, and $M$ is a finitely generated almost malnormal
subgroup of $A$ and $B$. Then $G$ is virtually normally poly-free.
\end{corollary}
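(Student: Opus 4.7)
The plan is to realize $G$ as the fundamental group of a finite graph of finitely generated virtually free groups and show that the corresponding action on the Bass--Serre tree is thin. Once this is done, the theorem of Wise mentioned just before the statement produces a finite-index subgroup $H\leq G$ whose induced splitting is algebraically clean, and Corollary \ref{cor1} then promotes $H$ (and hence $G$ virtually) to being normally poly-free.

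The only real content is the thinness check. View $G=A\ast_{M}B$ as a graph of groups $\Gamma$ with two vertices $v_{A},v_{B}$ carrying $A$ and $B$ and a single edge $e$ with edge group $M$; let $T$ be the Bass--Serre tree. Vertices of $T$ are cosets $gA$ and $gB$, and edges are cosets $gM$. An embedded length-$2$ path in $T$ has a central vertex $u$, say of $A$-type with stabilizer a conjugate $gAg^{-1}$, and two distinct incident edges whose stabilizers are two distinct conjugates of $M$ inside $gAg^{-1}$, say $gMg^{-1}$ and $ghMh^{-1}g^{-1}$ with $h\in A\setminus M$. Hence the path stabilizer equals
\[
g\bigl(M\cap hMh^{-1}\bigr)g^{-1},
\]
which is finite by the almost malnormality of $M$ in $A$. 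The symmetric argument covers central vertices of $B$-type using the almost malnormality of $M$ in $B$. Thus the action of $G$ on $T$ is thin.

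Because $A$ and $B$ are finitely generated virtually free and $M$ is finitely generated, $\Gamma$ is a finite thin graph of finitely generated virtually free groups. Applying Wise's result \cite{Wi02} yields a finite-index subgroup $H\leq G$ together with an induced splitting $\Gamma_{H}$ that is algebraically clean, i.e.\ each vertex group of $\Gamma_{H}$ is free and each edge group is a free factor of its adjacent vertex groups. Corollary \ref{cor1} applied to $\Gamma_{H}$ then shows that $H$ is normally poly-free, and therefore $G$ is virtually normally poly-free.

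The main obstacle I expect is simply bookkeeping for the length-$2$ path stabilizers: one has to make sure the two edges are genuinely distinct (i.e.\ correspond to different cosets $gM$ and $ghM$ inside $gA$), which is exactly the condition $h\notin M$ that lets almost malnormality kick in. Once that is correctly set up, the remainder of the argument is a direct citation of Wise's virtually-algebraically-clean theorem combined with Corollary \ref{cor1}.
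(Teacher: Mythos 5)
Your proposal is correct and follows essentially the same route as the paper: establish thinness of the two-vertex graph of groups, invoke Wise's theorem that a thin finite graph of finitely generated virtually free groups is virtually algebraically clean, and then apply Corollary \ref{cor1}. The only cosmetic difference is that you verify thinness directly from the definition (computing the stabilizer of an embedded length-2 path as $g(M\cap hMh^{-1})g^{-1}$ with $h\notin M$), whereas the paper cites Wise's Lemma 2.3 characterizing thinness by almost malnormality conditions at the vertex groups; these are the same check.
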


\begin{proof}
By Wise \cite[Lemma 2.3]{Wi02} , a graph of groups $\Gamma $ is thin if and only
if the following two conditions hold: 1) for each end of an edge incident at 
$v$, the corresponding subgroup of $G_{v}$ is almost malnormal; 2) for each
pair of ends of edges of $\Gamma $ incident at $v$, any conjugates of the
subgroups of $G_{v}$ corresponding to these ends of edges have finite
intersection. This in particular implies our group $G$ is thin. Therefore, $G$ is virtually algebraically clean by \cite[Theorem 11.3]{Wi02}. By Corollary \ref{cor1}, the group $G$ is virtually normally poly-free.
\end{proof}

Note that in general an HNN extension of a free group may not be virtually poly-free (see Lemma \ref{lem1} below). An amalgamated free product of free groups could even be a simple group \cite{BM01}. Thus  in order to have a virtually poly-free
amalgamated free product $A\ast _{M}B,$ one has to put some additional
assumptions like those in Lemma \ref{key} and Corollary \ref{cor2}.

\begin{lemma}
\label{lem1}The  Baumslag-Solitar group $BS(m,n)=\langle
a,t:ta^{n}t^{-1}=a^{m}\rangle $ is virtually poly-free if and only if $m=\pm n.$
\end{lemma}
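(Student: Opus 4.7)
The plan is to handle the two directions separately. For the ``if'' direction (when $m=\pm n$), I would apply Lemma \ref{key} directly. The group $BS(m,n)$ is an HNN extension of the free group $\mathbb{Z} = \langle a\rangle$ with stable letter $t$ conjugating $\langle a^n\rangle$ to $\langle a^m\rangle$, and the defining isomorphism $a^n\mapsto a^m$ extends to an automorphism of $\mathbb{Z}$: the identity when $m=n$, and the inversion $a\mapsto a^{-1}$ when $m=-n$. Hence Lemma \ref{key} yields that $BS(m,n)$ is normally poly-free, in particular virtually poly-free.

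For the ``only if'' direction, assume $|m|\neq|n|$. Using the presentation isomorphism $BS(m,n)\cong BS(n,m)$ obtained by substituting $t\leftrightarrow t^{-1}$, I may assume $|m|<|n|$. Suppose, for contradiction, that $H\leq BS(m,n)$ is a finite-index poly-free subgroup. Choose $k_0\geq 1$ minimal with $a^{k_0}\in H$ and $p\geq 1$ minimal with $t^p\in H$. Iterating the defining relation $ta^n t^{-1}=a^m$ gives $t^p a^{n^p}t^{-p}=a^{m^p}$, whence $t^p(a^{k_0})^{n^p}t^{-p}=(a^{k_0})^{m^p}$. Setting $a_j:=t^{pj}a^{k_0}t^{-pj}\in H$, one obtains a sequence satisfying $a_{j+1}^{\,n^p}=a_j^{\,m^p}$ with $a_0=a^{k_0}\neq 1$ and $1\leq|m^p|<|n^p|$.

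The contradiction reduces to the following key claim, which I would prove by induction on the poly-free length of the ambient group: in any poly-free group $G$, no sequence $\{a_j\}_{j\geq 0}$ with $a_0\neq 1$ can satisfy $a_{j+1}^{\,N}=a_j^{\,M}$ with $1\leq|M|<|N|$. In the base case (so $G$ is free), the relation $a_1^N=a_0^M$ forces $a_0,a_1$ into a common cyclic subgroup $\langle c\rangle$, inducing exponents $q_j$ with $a_j=c^{q_j}$ and $q_j=q_0(M/N)^j$; writing $M/N=M'/N'$ in lowest terms (so $\gcd(M',N')=1$ and $|N'|\geq 2$ thanks to $|M|<|N|$ and $|M|\geq 1$), integrality of every $q_j$ forces $(N')^j\mid q_0$ for all $j$, hence $q_0=0$, contradicting $a_0\neq 1$. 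For the inductive step $k\geq 2$, I project to the top free quotient $G/G_{k-1}$; the base case applied to the images forces $\bar{a}_0=1$, and then torsion-freeness of the free quotient propagates $\bar{a}_j=1$ along the recursion, placing the entire sequence inside $G_{k-1}$, whose poly-free length is strictly smaller, where induction applies.

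The main obstacle is the integrality argument in the free-group base case, where the asymmetry $|M|<|N|$ is essential: it guarantees $|N'|\geq 2$ in the lowest-terms expression of $M/N$, so that the iterated divisibility $(N')^j\mid q_0$ is untenable for nonzero $q_0$. The initial WLOG reduction $|m|<|n|$ via the symmetry $BS(m,n)\cong BS(n,m)$ is therefore crucial, since without it (for instance if $|n|$ divided $|m|$) the exponents $q_0(M/N)^j$ could remain integers indefinitely and the obstruction would collapse. A further subtle point is choosing $a^{k_0}$ and $t^p$ in $H$ so that $t^p$ conjugates a power of $a^{k_0}$ again into a power of $a^{k_0}$, which is what makes the sequence $\{a_j\}$ well-defined inside $H$ and lets the Key Claim deliver the contradiction uniformly across all subcases with $|m|\neq|n|$, including the solvable sub-case $\min(|m|,|n|)=1$.
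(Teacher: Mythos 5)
Your proof is correct and takes a somewhat different, more self-contained route than the paper's. For the ``only if'' direction, the paper first passes to the kernel $K$ of the degree map $BS(m,n)\to\mathbb{Z}$ sending $a\mapsto 0$, $t\mapsto 1$, computes the presentation $K=\langle a_i: a_i^m=a_{i+1}^n,\ i\in\mathbb{Z}\rangle$, and shows $K$ admits no nontrivial homomorphism to $\mathbb{Z}$, so $K$ cannot be poly-free because nontrivial poly-free groups are indicable; it then remarks that ``a similar argument'' handles a general finite-index subgroup via the conjugation identity $t^k a^{n^k l}t^{-k}=a^{m^k l}$. You work with the finite-index subgroup $H$ directly, build the same kind of sequence $a_j=t^{pj}a^{k_0}t^{-pj}\in H$ satisfying $a_{j+1}^{n^p}=a_j^{m^p}$, and replace the appeal to indicability with a self-contained ``Key Claim'' proved by induction on poly-free length. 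The two arguments are morally the same (both detect the impossibility of the exponent ratio $(m/n)^j$ remaining integral), but yours buys two things: it treats arbitrary finite-index subgroups without hand-waving, and it makes explicit the reduction $|m|<|n|$ via the isomorphism $BS(m,n)\cong BS(n,m)$, a normalization the paper's argument implicitly needs as well (as written, $(m/n)^{i+1}f(a_0)$ can stay integral forever when $n\mid m$, e.g.\ $(m,n)=(4,2)$, and one must run the recursion in the other direction). One small point worth spelling out in your base case: to place the entire sequence $\{a_j\}$ in a single maximal cyclic subgroup $\langle c\rangle$ of the free group, note first that $a_0\neq 1$ and the relation $a_{j+1}^N=a_j^M$ with torsion-freeness force $a_j\neq 1$ for all $j$; then consecutive terms commute (having a common nontrivial power), so all lie in the unique maximal cyclic subgroup containing $a_0$.
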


\begin{proof}
When $m=\pm n,$  Lemma \ref{key} implies that $BS(m,n)$ is normally poly-free. Conversely, let $K$ be the kernel of the group homomorphism $\phi
:BS(m,n)\rightarrow \mathbb{Z}$ given by $a\rightarrow 0$, $t\rightarrow 1.$
Note that $K$ is a graph of groups where the graph is a line. In fact,  $K$ has
the presentation  $\langle a_{i}:a_{i}^{m}=a_{i+1}^{n},i\in \mathbb{Z}\rangle
.$ Note that any group homomorphism $f:K\rightarrow \mathbb{Z}$ is trivial
by the following argument. We have $f(a_{i}^{m})=f(a_{i+1}^{n}),f(a_{i+1})=%
\frac{m}{n}f(a_{i})=\cdots =(\frac{m}{n})^{i+1}f(a_{0}).$ But for a fixed $%
f(a_{0})\in \mathbb{Z}\backslash \{0\},$ we cannot find $f(a_{i+1})$ with
this property for sufficiently large $i$ if $m\neq \pm n$. This implies $K$ is not indicable, hence it can not be poly-free. Therefore, $BS(m,n)$ is not poly-free since the subgroup $K$ is not. For a general finite-index subgroup $G<BS(m,n)$, there exist positive integers $k,l$ such that the elements $t^k,a^l \in G$. We have  $$t^k a^{n^k l} t^{-k} = a^{m^k l}.$$ 
A similar argument shows that $G$ is not poly-free if $m\neq \pm n$. Hence $BS(m,n)$ is not virtually poly-free.
\end{proof}

Let now $A\ast _{C}B$ be an amalgamated product of free groups $A,B$ over the free
group $C,$ with injections $\rho _{1}:C\hookrightarrow A,\rho
_{2}:C\hookrightarrow B.$ Let  $A_{\rho _{1}}\subseteq A,A_{\rho
_{2}}\subseteq B$ be oppressive sets with respect to some combinatorial
immersions of graphs inducing injections $\rho _{1},\rho _{2}.$ Similarly, let $A\ast _{C,\beta }$ be the HNN extension of a free group $A$ induced by the injection $\beta: C \rightarrow A$.  Denote by $A_{C\hookrightarrow A},A_{\beta (C)\hookrightarrow A}$ the oppressive sets from some graph immersions that induce $C\hookrightarrow A, \beta (C)\hookrightarrow A.$

The following is a generalization of \cite[Prop. 2.7]{Jan22}, where we
don't require that $K$ to be finite.

\begin{lemma}
\label{seplem}Let $K$ be a poly-free group. Suppose that

1) $\phi :A\ast _{C}B\rightarrow K$ is a group homomorphism such that $\phi
|_{A}$ separates  $C$ from $A_{\rho _{1}}$, and $\phi |_{B}$ separates $C$ from $%
A_{\rho _{2}}$, or

2) $\phi :A\ast _{C,\beta }\rightarrow K$ is a group homomorphism such that $%
\phi |_{A}$ separates $A_{C\hookrightarrow A},A_{\beta (C)\hookrightarrow A}$
from $C.$

Then $\ker \phi $ is an algebraically clean graph of free groups. In particular, the
amalgamated product $A\ast _{C}B$ and the HNN extension $A\ast _{C,\beta }$ are poly-free.
\end{lemma}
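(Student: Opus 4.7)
The plan is to apply Bass--Serre theory to the action of $\ker \phi$ on the Bass--Serre tree $T$ associated to the splitting $G=A\ast_C B$ (case 1) or $G=A\ast_{C,\beta}$ (case 2). Since $\ker\phi$ is normal in $G$, its restricted action on $T$ yields an induced splitting of $\ker\phi$ as a graph of groups. The vertex stabilizers are the intersections $\ker\phi\cap gAg^{-1}$ (and $\ker\phi\cap gBg^{-1}$ in case 1), and the edge stabilizers are $\ker\phi\cap gCg^{-1}$. Each is a subgroup of a free group, hence free, so $\ker\phi$ is a graph of free groups.

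I would then verify that this graph of groups is algebraically clean, i.e. each edge stabilizer is a free factor of the two adjacent vertex stabilizers. Using normality, for any $H\in\{A,B,C,\beta(C)\}$ and $g\in G$ one has $\ker\phi\cap gHg^{-1}=g(\ker\phi\cap H)g^{-1}$, so the claim reduces, after conjugating by $g\in G$ and then by an element of the corresponding vertex group, to two base assertions: in case 1, that $\ker\phi\cap C$ is a free factor of both $\ker\phi\cap A$ and $\ker\phi\cap B$; in case 2, that $\ker\phi\cap C$ and $\ker\phi\cap \beta(C)$ are each free factors of $\ker\phi\cap A$ (one condition per endpoint of the single edge orbit in the HNN tree). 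These are exactly the conclusions of Lemma \ref{2.4} applied to the separation hypotheses in the statement.

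With algebraic cleanness established, Corollary \ref{cor-ac-pf} shows $\ker\phi$ is normally poly-free, hence poly-free. The short exact sequence
\[
1\longrightarrow \ker\phi \longrightarrow G \longrightarrow \mathrm{Im}\,\phi \longrightarrow 1
\]
together with $\mathrm{Im}\,\phi\leq K$ being poly-free (subgroups of poly-free groups are poly-free) then expresses $G$ as an extension of two poly-free groups, so $G$ itself is poly-free.

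The main technical subtlety is that Lemma \ref{2.4} applies directly only to the distinguished inclusions $C\hookrightarrow A$ (or $\beta(C)\hookrightarrow A$), yet the induced splitting of $\ker\phi$ involves arbitrary $A$-conjugates $hCh^{-1}$ as vertex-group-side inclusions of the various edges adjacent to a given vertex. This is circumvented by noting that inner automorphisms of $A$ preserve both $\ker\phi\cap A$ (by normality of $\ker\phi$) and free-factor status, so the conclusion of Lemma \ref{2.4} propagates to every $A$-conjugate of $C$, and then conjugation by $g\in G$ propagates it to every vertex of $T$. In the HNN case the only additional bookkeeping is that the two ends of each edge are governed by the two different inclusions, which is precisely why both separation hypotheses $A_{C\hookrightarrow A}$ and $A_{\beta(C)\hookrightarrow A}$ appear.
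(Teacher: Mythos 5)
Your proof is correct and follows the same route as the paper: act with $\ker\phi$ on the Bass--Serre tree, use Lemma \ref{2.4} to establish that $\ker\phi\cap C$ is a free factor of $\ker\phi\cap A$ (and of $\ker\phi\cap B$, resp.\ $\ker\phi\cap\beta(C)$ of $\ker\phi\cap A$), propagate this by conjugation to every stabilizer, conclude algebraic cleanness, apply Corollary \ref{cor-ac-pf}, and close with the extension $1\to\ker\phi\to G\to\mathrm{Im}\,\phi\leq K\to 1$. The paper compresses the conjugation bookkeeping into a single sentence; your spelling it out, using normality of $\ker\phi$ to show the free-factor property transports to all $G$-translates, is a useful elaboration of the same argument rather than a different approach.
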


\begin{proof}
The $\ker \phi $ acts on the Bass--Serre tree $T$ of $A\ast _{C}B,$ with edge
stabilizers are conjugates of $\ker \phi |_{C}$ and the vertex stabilizers are conjugates of $\ker \phi |_{A},\ker \phi |_{B}.$ By the assumption,
Lemma \ref{2.4} implies that $\ker \phi |_{C}$ is a free factor of $\ker
\phi |_{A},\ker \phi |_{B}$ respectively. This further implies that, for the action of $\ker\phi$ on $T$, every edge group stabilizer is a free factor in each respected vertex stabilizer. Therefore, $\ker \phi $ is an algebraically clean
graph of free groups. Lemma \ref{cor1} implies that $\ker \phi$ is poly-free. Therefore, $A\ast _{C}B$, $A\ast _{C,\beta }$ are poly-free.
\end{proof}

\section{Poly-freeness of triangle Artin groups}

We prove Theorem \ref{th3} in this section. Recall that Jankiewicz proves in  \cite{Jan22, Jan21}  that most triangle Artin groups split as graphs of free groups. Abusing the notation, we may sometimes use the same symbol to denote both a graph map and its induced map on the fundamental groups.  We summarize her results as follows: 

\begin{theorem}\label{thm-split-sum}
\label{7.3}
Let $\mathrm{Art}_{MNP}$ be the triangle Artin group. 

\begin{enumerate}
\item[(i)] \cite[Corollary 4.13]{Jan22} When $M,N,P\geq 3,$ the group $\mathrm{Art}_{MNP}=A\ast _{C}B$
where $A\cong F_{3},B\cong F_{4},C\cong F_{7}$ and $[B,C]=2.$ The map $%
C\rightarrow A$ is induced by the  map $\rho_1
:X_{C}\rightarrow X_{A}$ in Figure \ref{pic-333-great}, and the homomorphism  $C\rightarrow B$
is induced by the quotient $\rho_2$ of the graph  $X_{C}$ by a 
rotation of angle $\pi $;

\begin{figure}[ht]
\centering
\includegraphics[width=.5\textwidth]{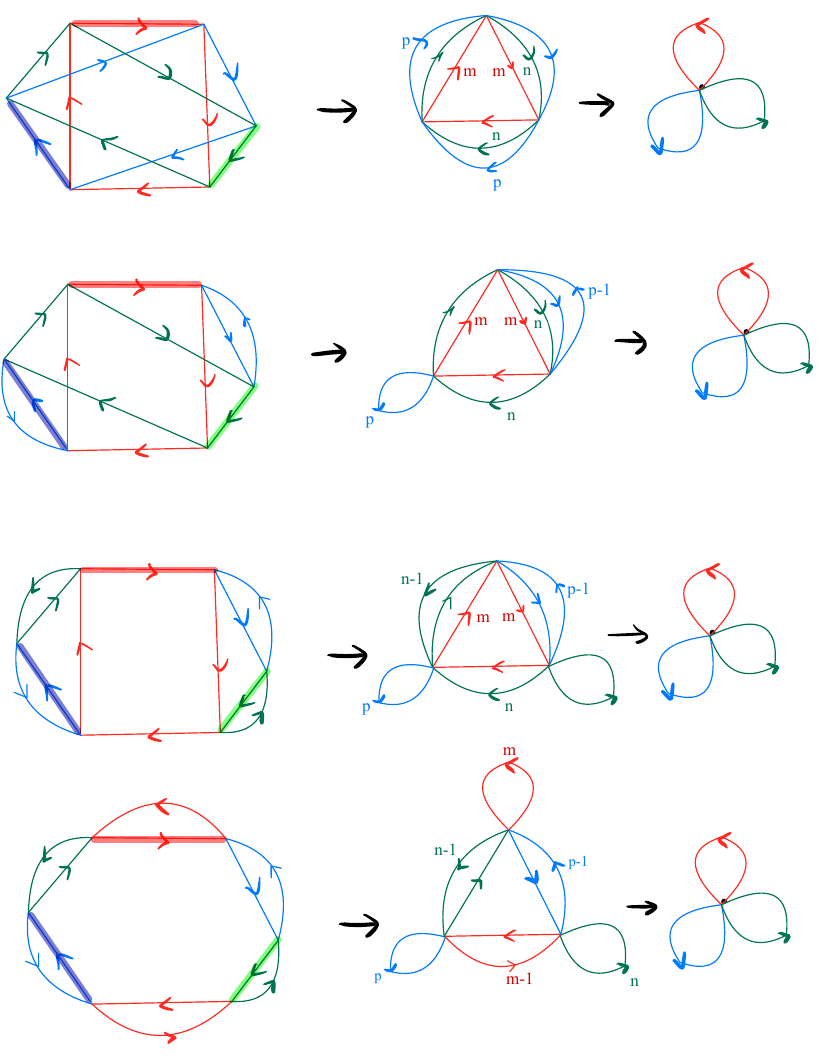}
\caption{The map $\rho_1: X_C \rightarrow \Bar{X}_C \rightarrow X_A $ when (1) none, (2) one, (3) two or (4) all of $M, N, P$ are even, respectively. In the picture, $M = 2m$ or $2m + 1, N = 2n$ or $2n + 1$, and $P = 2p$ or $2p + 1$. The edge labeled by a number $k$ is a concatenation of k edges of the given color.
The thickened edges in $X_C$ are the ones that collapsed to a vertex in $\bar{X}_C$.}
\label{pic-333-great}
\end{figure}

\item[(ii)] \cite[Proposition 2.4]{Jan21} When $P=2,$at least one of $M,N\geq 4$ is odd, the group $%
\mathrm{Art}_{MNP}=A\ast _{C}B$ where $A\cong F_{2},B\cong F_{3},C\cong F_{5}
$ and $[B,C]=2.$ The homomorphism $C\rightarrow A$ is induced by the  map $\rho_1 :X_{C}\rightarrow X_{A}$ in Figure \ref{pic-2-odd-odd}, and the homomorphism $%
C\rightarrow B$ is induced by the quotient $\rho_2$ of the graph $X_{C}$ by a 
rotation of angle $\pi $ (ignoring the label and orientation here);

\begin{figure} [ht]
\centering
\includegraphics[width=.5\textwidth]{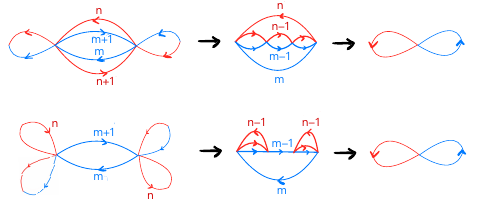}
\caption{The map $\rho_1:X_C \rightarrow \bar{X_C} \rightarrow X_A$ when $P = 2, M = 2m+1 \geq
5$, and (top) $N = 2n + 1 \geq 5$, (bottom) $N = 2n \geq4$, respectively. }
\label{pic-2-odd-odd}
\end{figure}

\item[(iii)] \cite[Corollary 2.9]{Jan21} When $P=2,$ both $M,N\geq 4$ are even, the group $\mathrm{Art}%
_{MNP}=A\ast _{B}$ where $A\cong F_{2},B\cong F_{3}.$ The two maps $%
B\rightarrow A$ are induced by the maps $\rho _{1},\rho
_{2}:X_{B}\rightarrow X_{A}$ in Figure \ref{2-e-e}.
\end{enumerate}
\end{theorem}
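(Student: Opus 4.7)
Each of the three splittings can be obtained by the Bestvina--Brady-style height-function construction that was already used to prove Theorem \ref{23even}. The plan is to equip the presentation $2$-complex $Y$ of $\mathrm{Art}_{MNP}$ (possibly after a change of presentation, as Hanham did in Section \ref{sec:split-23even}) with a height function $h\colon Y\to[0,1/2]$ whose level sets are graphs, and to read off the claimed splitting from the decomposition $Y=h^{-1}([0,\epsilon))\cup h^{-1}((1/2-\epsilon,1/2])$ via Seifert--van Kampen.

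First I would arrange each of the three Artin relators in $Y$ so that $h$ sends horizontal $1$-cells to $0$ and $1/2$ and divides each relator polygon symmetrically. The graphs $X_A=h^{-1}(0)$, $X_B=h^{-1}(1/2)$ and $X_C=h^{-1}(1/4)$ can then be drawn explicitly, and an Euler-characteristic computation confirms the free ranks stated in each case. In cases (i) and (ii), the $\pi$-rotation exchanging the upper and lower halves of every relator polygon induces a $2$-sheeted covering $X_C\to X_B$, explaining the index-$2$ inclusion $C\hookrightarrow B$. In case (iii), the parity hypotheses force $X_A$ and $X_B$ to be realized as a single graph, so that $Y$ assembles to an HNN model rather than an amalgamated-product model, with the two embeddings $X_B\to X_A$ induced by the top and bottom halves of the relator polygons.

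Next I would verify that the gluing $\rho_1\colon X_C\to X_A$ (and in case (iii) the pair $\rho_1,\rho_2\colon X_B\to X_A$) induces an injection on $\pi_1$. Following Figures \ref{pic-333-great}, \ref{pic-2-odd-odd} and \ref{2-e-e}, the thickened edges of $X_C$ are collapsed to form a quotient $\bar X_C$, which is then mapped to $X_A$ by the natural edge-labeling. In each parity subcase for $(M,N,P)$ one checks that no two edges of $\bar X_C$ with a common terminal vertex carry the same oriented label in $X_A$; this is exactly the property of being a combinatorial immersion, and Lemma \ref{stallings} then provides the required $\pi_1$-injection. The companion map $X_C\to X_B$ needs no extra argument, since a $\pi$-rotation quotient of a graph is automatically a covering.

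The main obstacle is the last step, which is genuinely a case analysis: the shape of $X_C$ in case (i) depends on how many of $M,N,P$ are even (four subcases), the shape in case (ii) depends on the parity of $N$ (two subcases), and the set of edges to collapse must be selected correctly in each instance so that the resulting map $\bar X_C\to X_A$ is fold-free. Once the no-fold condition has been verified in every configuration, Seifert--van Kampen assembles $\pi_1(Y)=\mathrm{Art}_{MNP}$ as $A*_C B$ in cases (i) and (ii) and as the HNN extension $A*_B$ in case (iii), giving the splittings asserted by Jankiewicz in \cite{Jan22,Jan21}.
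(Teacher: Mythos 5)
The paper does not prove this theorem; all three parts are quoted from Jankiewicz's papers \cite{Jan22,Jan21}, so there is no in-paper proof to compare against. That said, your sketch correctly reconstructs the method used there, and it is essentially the same level-set construction the authors carry out themselves in Section~\ref{sec:split-23even} for the $(2,3,2m)$ case: build a presentation complex, put on a $[0,1/2]$-valued height function whose level sets are the graphs $X_A$, $X_B$, $X_C$, check that collapsing the thickened edges of $X_C$ produces a fold-free map $\bar X_C\to X_A$ (Lemma~\ref{stallings}), observe that $X_C\to X_B$ is the $2$-sheeted covering induced by the $\pi$-rotation, and assemble the splitting. Your diagnosis that the remaining work is a parity-driven case analysis of the no-fold condition is accurate, as is the observation that case~(iii) yields an HNN extension because the two level sets $h^{-1}(0)$ and $h^{-1}(1/2)$ are identified. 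So the proposal is consistent with both the cited source and the authors' own parallel argument.
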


\begin{figure}[ht]
\centering
\includegraphics[width=.7\textwidth]{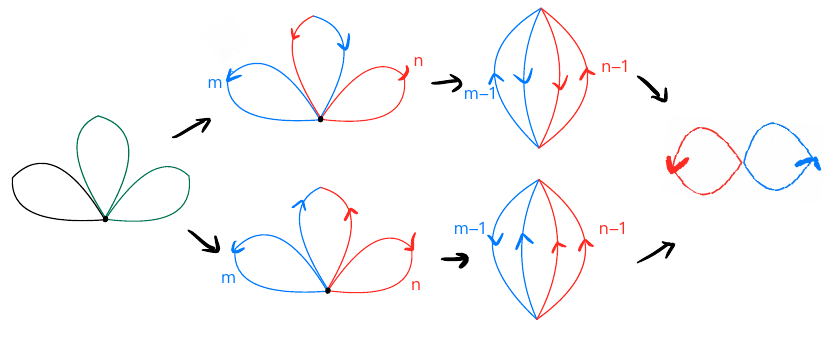}
\caption{The map $\rho_i: X_B \xrightarrow{=} X_B \rightarrow \bar{X}_B \rightarrow X_A,i=1,2,$ when $P = 2, M \geq
4, N\geq 4$ are even.}
\label{2-e-e}
\end{figure}

Based on these splittings, Jankiewicz proves the following.

\begin{lemma}
\label{lemkasia}
Suppose that
\begin{enumerate}
\item $M,N,P\geq 4$ and $(M,N,P)\neq (2m+1,4,4)$ (up to permutation), or

\item $M=2,N,P\geq 4$ and at least one of  $N,P$ is even. 
\end{enumerate}
Then the Artin group $\mathrm{Art}_{MNP} $ virtually splits as an algebraically clean
graph of finite-rank free groups.
\begin{proof}
(1) is proved by \cite[Cor. 5.7 and Cor. 5.12]{Jan22}. (2) is
contained in the proof of \cite[Theorem B]{Jan21}. In fact, \cite[Theorem B]{Jan21} is proved using \cite[Theorem 3.2]{Jan21} and \cite[Theorem 3.3]{Jan21}. But both theorems are proved by first showing that the groups satisfying conditions in the theorems virtually split as an algebraically clean graph of finite-rank free groups. 
\end{proof}
\end{lemma}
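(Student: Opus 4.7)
The plan is to leverage the explicit splittings from Theorem \ref{thm-split-sum} together with the separating homomorphism technique of Lemma \ref{seplem}. In both cases (1) and (2) the group $\mathrm{Art}_{MNP}$ is already written as $A\ast_C B$ (or, in the excluded even-even sub-case of (2), as an HNN extension $A\ast_B$) with $A,B$ finite-rank free groups and explicit combinatorial immersions $\rho_1:X_C\to X_A$, $\rho_2:X_C\to X_B$ inducing the edge inclusions. The strategy is to exhibit a homomorphism $\phi:\mathrm{Art}_{MNP}\to K$ with $K$ finite, such that $\phi|_A$ separates $\pi_1(X_C)$ from the oppressive set $A_{\rho_1}$ and $\phi|_B$ separates $\pi_1(X_C)$ from $A_{\rho_2}$. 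Once this is in place, Lemma \ref{2.4} yields that $\ker\phi\cap C$ is a free factor of $\ker\phi\cap A$ and of $\ker\phi\cap B$. Since $\ker\phi$ inherits a graph-of-groups decomposition from its action on the Bass–Serre tree (with vertex groups conjugate to $\ker\phi\cap A$ and $\ker\phi\cap B$, and edge groups conjugate to $\ker\phi\cap C$), the finite-index subgroup $\ker\phi$ is the desired algebraically clean graph of finite-rank free groups.

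To actually build $\phi$, I would apply Lemma \ref{lemma-imy-sep}: enlarge $X_A$ and $X_B$ to 2-complexes $X_{A,e}$, $X_{B,e}$ by coning off enough words (for example, powers of the dihedral relators, or cycles that correspond to elements of $A_{\rho_i}$) so that the resulting $\pi_1$-quotients are finite and assemble to a common finite quotient $K$ of $\mathrm{Art}_{MNP}$. Extend $\rho_i$ to a map $\rho_{i,e}:X_{C,e}\to X_{i,e}$ compatibly and arrange that the extension matches the image of $\pi_1(X_C)$ under $\phi$. The hypothesis of Lemma \ref{lemma-imy-sep} then reduces to checking that the lift $\widetilde{\rho_{i,e}}$ of each extended map to universal covers is an embedding, which in turn forces $\phi(\pi_1(X_C))\cap\phi(A_{\rho_i})=\emptyset$.

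The main obstacle is the embedding condition in Lemma \ref{lemma-imy-sep}. For case (1), with $M,N,P\geq 4$ and avoiding the triple $(2m+1,4,4)$, Jankiewicz constructs the required finite extensions in \cite[Cor.~5.7, Cor.~5.12]{Jan22}; the excluded triple fails because a parity obstruction in the oppressive set prevents such a separating quotient from existing (this is exactly what forces the case distinction in her work). For case (2) one exploits the fact that one of $N,P$ is even: this permits a finite quotient of dihedral/Coxeter type that kills the obstructing elements of $A_{\rho_1}$ and $A_{\rho_2}$ simultaneously, and the verification is carried out inside the proofs of \cite[Thm.~3.2, Thm.~3.3]{Jan21}, whose intermediate step states precisely that the finite-index subgroup produced is algebraically clean. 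Combining these two inputs gives both parts of the lemma.
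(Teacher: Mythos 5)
Your argument ultimately lands on the same citations as the paper: Corollaries 5.7 and 5.12 of \cite{Jan22} for case (1), and the proofs of Theorems 3.2 and 3.3 of \cite{Jan21} for case (2). The surrounding exposition you give — construct a finite quotient $K$, use Lemma \ref{2.4} via Lemma \ref{lemma-imy-sep} to see that the edge group becomes a free factor in each vertex group of $\ker\phi$, then pass to the Bass–Serre tree — is a faithful description of the machinery Jankiewicz runs, so it is correct and essentially the same proof, just with more of Jankiewicz's internal mechanism made explicit. One small imprecision: you write that $K$ should be finite, but what is actually required is that $\ker\phi$ be finite index in $\mathrm{Art}_{MNP}$ and that the separation conditions hold; the intermediate group $\bar G$ in Lemma \ref{2.4} need not a priori be finite, and indeed in the paper's later applications (Theorem \ref{mthm-poly-free}) the analogous quotient is the infinite Coxeter group. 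Here finiteness of the index is what ensures the vertex and edge groups of $\ker\phi$ have finite rank, so your plan is consistent, but it is worth noting that finiteness of $K$ is a convenience rather than a structural necessity of the lemma chain.
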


Combined Lemma \ref{lemkasia} with Corollary \ref{cor1}, we have the following:

\begin{corollary}\label{cor:vnpf}
Suppose that
\begin{enumerate}
\item $M,N,P\geq 4$ and $(M,N,P)\neq (2m+1,4,4)$ (up to permutation), or
\item $M=2,N,P\geq 4$ and at least one of  $M,P$ is even. 
\end{enumerate}
Then the Artin group $\mathrm{Art}_{MNP}$ is virtually normally poly-free.
\end{corollary}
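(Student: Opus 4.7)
The plan is to deduce the corollary by directly combining the two ingredients already assembled in the paper: Lemma \ref{lemkasia} provides, under either hypothesis (1) or (2), a finite-index subgroup $H\leq \mathrm{Art}_{MNP}$ that splits as an algebraically clean graph of finite-rank free groups; and Corollary \ref{cor1} (the algebraically clean $\Longrightarrow$ normally poly-free implication) applies verbatim to this induced splitting. So the argument is essentially a one-line concatenation of these two statements.

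More concretely, first I would fix notation: let $H$ be the finite-index subgroup of $\mathrm{Art}_{MNP}$ guaranteed by Lemma \ref{lemkasia}, and let $\Gamma_H$ be its induced splitting, whose vertex groups are finite-rank free groups and whose edge groups are free factors of the incident vertex groups. Next, I would invoke Corollary \ref{cor1} applied to $\Gamma_H$: since $\Gamma_H$ is algebraically clean, the group $H=\pi_1(\Gamma_H)$ is normally poly-free. Finally, since $H$ has finite index in $\mathrm{Art}_{MNP}$, this gives the desired conclusion that $\mathrm{Art}_{MNP}$ is virtually normally poly-free.

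There is essentially no obstacle, as both hypotheses of Corollary \ref{cor1} (free vertex groups, edge subgroups that are free factors) are precisely what "algebraically clean" encodes, and the finite-rank hypothesis in Lemma \ref{lemkasia} is more than enough (Corollary \ref{cor1} does not even require finite rank or finite graph). The only thing worth flagging is that $\Gamma_H$ comes with the structure of a (possibly non-trivial) graph of groups rather than a single HNN or amalgamated piece, but Corollary \ref{cor1} is stated for arbitrary graphs of groups, so this causes no issue.
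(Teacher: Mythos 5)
Your proof is correct and is precisely the argument the paper intends: the paper introduces the corollary with the phrase "Combined Lemma \ref{lemkasia} with Corollary \ref{cor1}, we have the following," which is exactly your two-step concatenation. (As a minor aside unrelated to your proof, note that item (2) of the corollary statement says "at least one of $M,P$ is even," which is vacuous since $M=2$; comparing with Lemma \ref{lemkasia} this should read "$N,P$," but this typo does not affect the argument.)
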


Recall that given any Artin group $ \mathrm{Art}_{MNP}$, one can define its corresponding Coxeter group by
\begin{equation*}
\Delta_{MNP} =\langle
a,b,c:(ab)^{M},(bc)^{N},(ca)^{P},a^{2}=b^{2}=c^{2}=1\rangle.
\end{equation*}%
Let $\phi :\mathrm{Art}_{MNP}\rightarrow \Delta_{MNP} $  be the natural
surjection. Note that here we are abusing the notation to use $a,b,c$ denoting both the standard generators of $\mathrm{Art}_{MNP}$ and their images in $\Delta_{MNP}$.  We call its kernel the pure Artin group and denote it by $\mathrm{PA}_{MNP}$. 

\begin{theorem}\label{mthm-poly-free}
Let $ \mathrm{Art}_{MNP}$ be the triangle Artin group labeled by $M\geq
3,N\geq 3,P\geq 3$ or $M=2,N\geq 4,P\geq 4$ or $M=2,N=3,P\geq 6$ is even. The pure Artin group $\mathrm{PA}_{MNP}$ is an
algebraically clean graph of free groups (i.e.~each vertex subgroup is a free group and
each edge subgroup is a free factor of its vertex subgroups). Thus,
the Artin group $ \mathrm{Art}_{MNP}$ is virtually poly-free.
\end{theorem}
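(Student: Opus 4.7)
The plan is to upgrade the free splittings of $\mathrm{Art}_{MNP}$ recorded in Theorem~\ref{thm-split-sum} and Theorem~\ref{23even} into an algebraically clean graph-of-free-groups splitting of the pure subgroup $\mathrm{PA}_{MNP}$, and then to feed this into Corollary~\ref{cor-ac-pf} together with the standard virtual poly-freeness of the triangle Coxeter quotient $\Delta_{MNP}$.

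For each listed parameter range we have a splitting $\mathrm{Art}_{MNP} = A \ast_C B$ (or an HNN extension $A \ast_C$) with $A, B, C$ finitely generated free groups, where the edge inclusions $\rho_1 : C \hookrightarrow A$ and $\rho_2 : C \hookrightarrow B$ are realized by the explicit combinatorial immersions of graphs $X_C \to X_A$ and $X_C \to X_B$ drawn in Figures~\ref{pic-333-great}, \ref{pic-2-odd-odd}, \ref{2-e-e}, or implicit in the proof of Theorem~\ref{23even}. The subgroup $\mathrm{PA}_{MNP} = \ker \phi$ acts on the Bass--Serre tree of this splitting with vertex stabilizers conjugate to $\ker \phi|_A, \ker \phi|_B$ and edge stabilizers conjugate to $\ker \phi|_C$, all of which are subgroups of free groups and hence free; so $\mathrm{PA}_{MNP}$ is automatically a graph of free groups, and only the free-factor condition remains.

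To establish algebraic cleanness I would apply Lemma~\ref{seplem}, which reduces the problem to showing that $\phi|_A$ separates $C$ from the oppressive set $A_{\rho_1}$, and symmetrically that $\phi|_B$ separates $C$ from $A_{\rho_2}$. By Lemma~\ref{lemma-imy-sep}, each separation can be verified by thickening $X_A$ to a 2-complex $X_e$ whose 2-cells realize the Coxeter relators $a^2, b^2, c^2$ and the long dihedral relators $(ab)^M, (bc)^N, (ca)^P$ supported on the generators of $\phi(A)$, so that $\pi_1(X_e) \cong \phi(A) \leq \Delta_{MNP}$; thickening $X_C$ to $Y_e$ by attaching matching 2-cells for every lift of those relators that fits inside $X_C$; and then checking that the induced map of universal covers $\widetilde{\rho_e}: \widetilde{Y_e} \to \widetilde{X_e}$ is an embedding. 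The natural model for $\widetilde{X_e}$ is a subcomplex of the Davis complex of $\phi(A)$ (a dihedral or rank--$3$ Coxeter subgroup of $\Delta_{MNP}$), which is CAT($0$); so the embedding test reduces to verifying local injectivity of $\widetilde{\rho_e}$ at vertices together with simple-connectedness of the image of a lift of a fundamental polygon of $Y_e$.

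The principal obstacle is executing this embedding verification for each of the parity variants in Figures~\ref{pic-333-great}, \ref{pic-2-odd-odd}, \ref{2-e-e}, and for $(2,3,2m)$, since the shapes of both $X_C$ and the attaching $2$-cells change with the parities of $M, N, P$; this is a case-by-case combinatorial inspection rather than one uniform argument. Once those cases are settled, Lemma~\ref{seplem} delivers the algebraically clean splitting of $\mathrm{PA}_{MNP}$, Corollary~\ref{cor-ac-pf} makes $\mathrm{PA}_{MNP}$ normally poly-free, and the virtual normal poly-freeness of $\mathrm{Art}_{MNP}$ follows by taking the $\phi$-preimage of a torsion-free normally poly-free finite-index subgroup of $\Delta_{MNP}$, which exists because a triangle Coxeter group is a finite group, a Euclidean crystallographic group, or a cocompact Fuchsian group, and each of these is virtually normally poly-free.
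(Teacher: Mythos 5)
Your proposal follows essentially the same route as the paper: reduce to the known free splittings of Theorem~\ref{thm-split-sum} and Theorem~\ref{23even}, apply Lemma~\ref{seplem} to $\phi:\mathrm{Art}_{MNP}\to\Delta_{MNP}$ to reduce algebraic cleanness of $\mathrm{PA}_{MNP}=\ker\phi$ to separation of $C$ from the two oppressive sets, verify separation via Lemma~\ref{lemma-imy-sep} by attaching $2$-cells and using a CAT($0$) local-to-global embedding argument, and finish with Corollary~\ref{cor-ac-pf}. One small difference worth noting: for the second edge map $\rho_2$ (the double cover $X_C\to X_B$) the paper does not run the CAT($0$) embedding machinery at all; it observes directly that $\phi(C)$ lands in the index-$2$ orientation-preserving (von Dyck) subgroup of $\Delta_{MNP}$ while oppressive-set elements are represented by paths between antipodal vertices and hence map to orientation-reversing elements, giving separation for free. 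Relatedly, on the $\rho_1$ side the $2$-cells the paper attaches encode the von Dyck relators $x^M=y^N=z^P=xyz=1$ in the generators of $\phi(A)$, not the reflection relators $a^2,b^2,c^2$ you list, which do not live in $\phi(A)$; your phrasing "supported on the generators of $\phi(A)$" should be read that way.
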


\begin{proof}
Note first that $\Delta_{MNP} $ is either a hyperbolic triangle group acting
properly and cocompactly on the upper half-plane $\mathbb{H}^{2}$ or an Euclidean triangle
group acting properly and cocompactly on $\mathbb{R}^{2}$. Since any finitely generated
Coxeter group is virtually a subgroup of a right-angled Artin group \cite[Corollary 1.3]
{HW10} and every right-angled Artin group is normally poly-free
(see for example \cite[Theorem A]{Wu22}), the group $\Delta_{MNP} $ is virtually normally poly-free. We
prove the theorem  by showing that $\mathrm{PA}_{MNP}$ is an algebraically clean graph of free groups, which implies that $ \mathrm{PA}_{MNP} = \ker \phi$ is poly-free using Lemma \ref{seplem},  thus $\mathrm{Art}_{MNP}$ is virtually poly-free.

\begin{enumerate}[label= Case \arabic*.]
\item  $M\geq 3,N\geq 3,P\geq 3.$ By Theorem \ref{7.3},
the Artin group $\mathrm{Art}_{MNP}$ splits as an amalgamated product $F_{3}\ast
_{F_{7}}F_{4}.$ The splitting is induced by a decomposition of a
presentation complex $X_{0}\cup X_{1/2}$ with $X_{0}\cap X_{1/2}=X_{1/4}.$
The injection $\rho _{2}:F_{7}\rightarrow F_{4}$ is induced by the
double-covering $f$ (rotating the plane by $\pi $) in Figure \ref{case1}. By Lemma \ref{seplem}, it suffices to prove that $\phi $ separates $F_{7}$
from the oppressive sets of $F_{7}$ in $F_{3},F_{4}$.

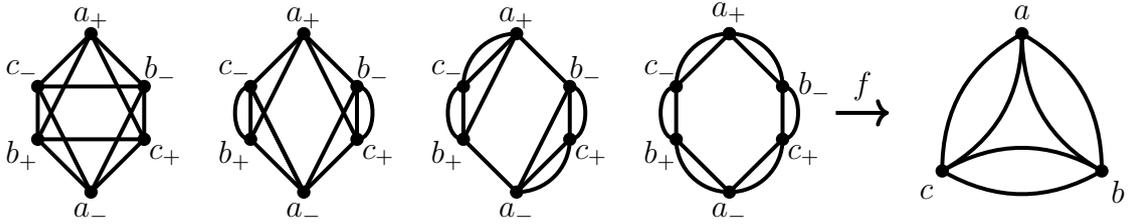
\begin{figure}[ht]
\centering
\begin{tikzpicture}[line width=1.5pt, scale = 0.7] 

\begin{scope}[decoration={
    markings,
    mark=at position .6 with {\arrow{}}}]
   \draw[->] (2,-1.5) --  (3,-1.5);
  \draw[postaction={decorate}] (5.5,-0) to [out=-30,in=90] (7,-2.6);
 \draw[postaction={decorate}] (5.5,0) to [out=-90,in=150] (7,-2.6);
   \draw[postaction={decorate}] (5.5,0) to [out=210,in=90]  (4,-2.6);
   \draw[postaction={decorate}] (5.5,0) to [out=-90,in=30]  (4,-2.6);
   \draw[postaction={decorate}] (4,-2.6) to [out= 30,in= 150] (7,-2.6);
   \draw[postaction={decorate}] (4,-2.6) to [out= -30,in= 210]  (7,-2.6);
  
\filldraw (0,0) circle (2.5pt);
 \filldraw       (5.5,0) circle (2.5pt);
 \filldraw       (4,-2.6) circle (2.5pt);
 \filldraw       (7,-2.6) circle (2.5pt);

\draw (5.5, 0.4)  node[text=black, scale=1]{$a$};
\draw (7+0.3, -3)  node[text=black, scale=1]{$b$};
 \draw (4- 0.3, -3)  node[text=black, scale=1]{$c$};
\draw (2.5,-1) node[text=black, scale=1]{$f$};
\end{scope}

\begin{scope}[decoration={
    markings,
    mark=at position .6 with {\arrow{}}}]
   \draw[postaction={decorate}] (-1,-1) to  (0,0);
  \draw[postaction={decorate}] (-1,-1) to [out=90,in=180] (0,0);
\draw[postaction={decorate}] (1,-1) to   (0,0);
   \draw[postaction={decorate}] (1,-1) to [out=90,in=0]  (0,0);
   \draw[postaction={decorate}] (1,-1) to  (1,-2);
   \draw[postaction={decorate}] (1,-1) to [out= 0,in= 0]  (1,-2);
   \draw[postaction={decorate}] (0,-3) to  (1,-2);
      \draw[postaction={decorate}] (0,-3) to [out=0,in=-90]  (1,-2);
   \draw[postaction={decorate}] (-1,-1) to  (-1,-2);
      \draw[postaction={decorate}] (-1,-1) to [out=180,in=180]  (-1,-2);
   \draw[postaction={decorate}] (0,-3) to  (-1,-2);
      \draw[postaction={decorate}] (0,-3) to [out=180,in=-90]  (-1,-2);  
\filldraw (0,0) circle (2.5pt);
 \filldraw       (1,-1) circle (2.5pt);
 \filldraw       (1,-2) circle (2.5pt);
 \filldraw       (0,-3) circle (2.5pt);
 \filldraw       (-1,-2) circle (2.5pt);
 \filldraw       (-1,-1) circle (2.5pt);

\draw (0, 0.4)  node[text=black, scale=1]{$a_+$};
\draw (1+0.6, -1)  node[text=black, scale=1]{ $b_{-}$};
 \draw (1+ 0.4, -2- 0.3)  node[text=black, scale=1]{$c_+$};
\draw (0, -3- 0.4)  node[text=black, scale=1]{$a_{-}$};
\draw (-1 - 0.3, -2- 0.3)  node[text=black, scale=1]{$b_+$};
\draw (-1 - 0.3, -1+0.3)  node[text=black, scale=1]{$c_{-}$};

\end{scope}
\begin{scope}[decoration={
    markings,
    mark=at position .6 with {\arrow{}}}]
   \draw[postaction={decorate}] (-4+-1,-1) to  (-4+0,0);
  \draw[postaction={decorate}] (-4+-1,-1) to [out=90,in=180] (-4+0,0);
\draw[postaction={decorate}] (-4+1,-1) to   (-4+0,0);
   \draw[postaction={decorate}] (-4-1,-2) to  (-4+0,0);
   \draw[postaction={decorate}] (-4+1,-1) to  (-4+1,-2);
   \draw[postaction={decorate}] (-4+1,-1) to [out= 0,in= 0]  (-4+1,-2);
   \draw[postaction={decorate}] (-4+0,-3) to  (-4+1,-2);
      \draw[postaction={decorate}] (-4+0,-3) to [out=0,in=-90]  (-4+1,-2);
   \draw[postaction={decorate}] (-4+-1,-1) to  (-4+-1,-2);
      \draw[postaction={decorate}] (-4+-1,-1) to [out=180,in=180]  (-4+-1,-2);
   \draw[postaction={decorate}] (-4+0,-3) to  (-4+-1,-2);
      \draw[postaction={decorate}] (-4+0,-3) to  (-4+1,-1);  
\filldraw (-4+0,0) circle (2.5pt);
 \filldraw       (-4+1,-1) circle (2.5pt);
 \filldraw       (-4+1,-2) circle (2.5pt);
 \filldraw       (-4+0,-3) circle (2.5pt);
 \filldraw       (-4+-1,-2) circle (2.5pt);
 \filldraw       (-4+-1,-1) circle (2.5pt);

\draw (-4+0, 0.3)  node[text=black, scale=1]{$a_+$};
\draw (-4+1+0.3, -1+0.3)  node[text=black, scale=1]{$b_{-}$};
 \draw (-4+1+ 0.4, -2- 0.3)  node[text=black, scale=1]{$c_+$};
\draw (-4+0, -3- 0.4)  node[text=black, scale=1]{$a_{-}$};
\draw (-4+-1 - 0.3, -2- 0.3)  node[text=black, scale=1]{$b_+$};
\draw (-4+-1 - 0.3, -1+0.3)  node[text=black, scale=1]{$c_{-}$};

\end{scope}

\begin{scope}[decoration={
    markings,
    mark=at position .6 with {\arrow{}}}]
   \draw[postaction={decorate}] (-8+-1,-1) to  (-8+0,0);
  \draw[postaction={decorate}] (-8+-1,-1) to  (-8+0,-3);
\draw[postaction={decorate}] (-8+1,-1) to   (-8+0,0);
   \draw[postaction={decorate}] (-8+1,-1) to  (-8+0,-3);
   \draw[postaction={decorate}] (-8+1,-1) to  (-8+1,-2);
   \draw[postaction={decorate}] (-8+1,-1) to [out= 0,in= 0]  (-8+1,-2);
   \draw[postaction={decorate}] (-8+0,-3) to  (-8+1,-2);
    \draw[postaction={decorate}] (-8+0,-3) to   (-8-1,-1);
   \draw[postaction={decorate}] (-8+-1,-1) to  (-8+-1,-2);
      \draw[postaction={decorate}] (-8+-1,-1) to [out=180,in=180]  (-8+-1,-2);
   \draw[postaction={decorate}] (-8+0,-3) to  (-8+-1,-2);
    \draw[postaction={decorate}] (-8+0,0) to   (-8+-1,-2);  
    \draw[postaction={decorate}] (-8+0,0) to   (-8+1,-2);  
\filldraw (-8+0,0) circle (2.5pt);
 \filldraw       (-8+1,-1) circle (2.5pt);
 \filldraw       (-8+1,-2) circle (2.5pt);
 \filldraw       (-8+0,-3) circle (2.5pt);
 \filldraw       (-8+-1,-2) circle (2.5pt);
 \filldraw       (-8+-1,-1) circle (2.5pt);

\draw (-8+0, 0.3)  node[text=black, scale=1]{$a_+$};
\draw (-8+1+0.3, -1+0.3)  node[text=black, scale=1]{$b_{-}$};
 \draw (-8+1+ 0.4, -2- 0.3)  node[text=black, scale=1]{$c_+$};
\draw (-8+0, -3- 0.4)  node[text=black, scale=1]{$a_{-}$};
\draw (-8+-1 - 0.3, -2- 0.3)  node[text=black, scale=1]{$b_+$};
\draw (-8+-1 - 0.3, -1+0.3)  node[text=black, scale=1]{$c_{-}$};

\end{scope}

\begin{scope}[decoration={
    markings,
    mark=at position .6 with {\arrow{}}}]
   \draw[postaction={decorate}] (-12+-1,-1) to  (-12+0,0);
  \draw[postaction={decorate}] (-12-1,-1) to  (-12+1,-1);
\draw[postaction={decorate}] (-12+1,-1) to   (-12+0,0);
   \draw[postaction={decorate}] (-12-1,-1) to   (-12+0,-3);
   \draw[postaction={decorate}] (-12+1,-1) to  (-12+1,-2);
   \draw[postaction={decorate}] (-12+1,-1) to   (-12+0,-3);
   \draw[postaction={decorate}] (-12+0,-3) to  (-12+1,-2);
      \draw[postaction={decorate}] (-12+0,0) to  (-12+1,-2);
   \draw[postaction={decorate}] (-12+-1,-1) to  (-12+-1,-2);
      \draw[postaction={decorate}] (-12+1,-2) to  (-12+-1,-2);
   \draw[postaction={decorate}] (-12+0,-3) to  (-12+-1,-2);
      \draw[postaction={decorate}] (-12+0,0) to   (-12-1,-2);  
\filldraw (-12+0,0) circle (2.5pt);
 \filldraw       (-12+1,-1) circle (2.5pt);
 \filldraw       (-12+1,-2) circle (2.5pt);
 \filldraw       (-12+0,-3) circle (2.5pt);
 \filldraw       (-12+-1,-2) circle (2.5pt);
 \filldraw       (-12+-1,-1) circle (2.5pt);

\draw (-12+0, 0.3)  node[text=black, scale=1]{$a_+$};
\draw (-12+1+0.3, -1+0.3)  node[text=black, scale=1]{$b_{-}$};
 \draw (-12+1+ 0.4, -2- 0.3)  node[text=black, scale=1]{$c_+$};
\draw (-12+0, -3- 0.4)  node[text=black, scale=1]{$a_{-}$};
\draw (-12+-1 - 0.3, -2- 0.3)  node[text=black, scale=1]{$b_+$};
\draw (-12+-1 - 0.3, -1+0.3)  node[text=black, scale=1]{$c_{-}$};

\end{scope}

\end{tikzpicture}
\caption{The graph $X_{1/4}$ if (1) all $M,N,P$ are odd, (2) only $N$ is even, (3) only $M$ is odd, (4) all $M,N,P$ are even. In all cases, $X_{1/4}\rightarrow X_{1/2}$ is a double covering.}
\label{case1}
\end{figure}

We check first  $\phi |_{F_{4}}$ separates $F_{7}$ from the oppressive set $%
A_{\rho _{2}}= A_{f}.$ Fixing a based point $x_0 = a \in X_{1/2}$, the  oppressive set $A_{\rho _{2}}$
consists of all $g\in \pi _{1}(X_{1/2},x_{0})$
representing by a cycle $\gamma $ in $X_{1/2}$ such that $\gamma =\rho_2 (\mu
_{1})\cdot \rho_2 (\mu _{2}),$ a concatenation, where $\mu _{1}$ is a
non-trivial simple non-closed path in $X_{1/4}$ going from the base point $y_{0}=a^{+}$ to some vertex $%
y_{1},$ and $\mu _{2}$ is either trivial or a simple non-closed path in $X_{1/4}$
going from some vertex $y_{2}$ to $y_{0},$ with $y_{1}\neq y_{2}\neq y_{0}.$
Note that each element in the oppressive set $%
A_{\rho _{2}}$ is conjugated to an element represented by the image of a simple path
connecting $a_{+},a_{-},$ or $b_{+},b_{-},$ or $c_{+},c_{-}.$ For any
element $g\in A_{\rho _{2}},$ it is not hard to see that $\phi (g)\notin
\phi (F_{7})$ if and only if $\phi (h_{1}gh_{2})\notin \phi (F_{7})$ for any 
$h_{1},h_{2}\in F_{7}.$ Since $F_{7}$ is of index 2 (and thus normal) in $F_{4},$ it suffices
to prove that there exists  one (and thus arbitrarily many) path connecting $%
a_{+},a_{-},$ or $b_{+},b_{-},$ or $c_{+},c_{-}$ respectively, whose image
does not lie in $\phi (F_{7})$.  We know that the subgroup $ F_3 = \pi_1({X_0})$ is generated by $ab,bc,ca$, whose images are orientation-preserving in the Coxeter group $\Delta_{MNP}$ (see \cite[Section 3.2]{Jan21}). In order to generate $\Delta_{MNP}$, the subgroup $F_4$ must have an orientation-reversing element $g$. Write $F_4 = F_7 \cup g F_7$. Note that an element of the normal subgroup $F_7$ in $F_4$ is represented by a closed loop based at $a$, whose lift will again be a loop  since $f$ is a two-fold covering.  Thus the image of simple paths connecting $a_{+},a_{-},$ or $b_{+},b_{-},$ or $c_{+},c_{-}$ does not lie in $F_7$.  From this, we see that $A_{\rho _{2}}$ is separated from $%
F_{7}$ by $\phi |_{F_{4}}.$

We check that $\phi |_{F_{3}}$ separates $F_{7}$ from the oppressive set $%
A_{\rho _{1}}.$ Let%
\begin{equation*}
K=\langle x,y,z:x^{M}=1, y^{N}= 1, z^{P}=1, xyz=1\rangle
\end{equation*}%
and $X_K$ be the presentation complex of $K.$ Note that $K$ is the von Dyck
group, the index-2 subgroup of $\Delta_{MNP} $ and $\phi (F_{3})=K.$ Let $\tilde{X}_K
$ be the universal cover of $X_K$. Identify the 2-cells in $\tilde{X}_K$ with
the same boundary (i.e.~$M$ copies of the 2-cells whose boundary word is $%
x^{M}$ are collapsed to a single 2-cell, and similarly for $y^{M},z^{P}$).
Denote by $X^{\prime }$ the resulting complex. Note that $X_K^{\prime }$
admits a metric so that it is isometric to $\mathbb{H}^{2}$ (or $\mathbb{R}%
^{2}$ when $M=N=P=3$). We proceed the proof in two subcases.

\begin{enumerate}[label= Subcase 1.\arabic*.]
\item $M=2m+1,N=2n+1,P=2p+1\geq 3$ are odd. The injection $\rho _{1}:F_{7}\rightarrow F_{3}$ is induced by the graph
immersion in the first row of Figure \ref{pic-333-great}. Attach now four 2-cells (with one along the great cycle and the other three ones along the monochrome cycles) to the graph $X_{1/4}$ of $F_{7}$ as shown in Figure \ref{pic-7-case1}
\begin{figure}[ht]
\centering
\includegraphics[width=0.5\textwidth]{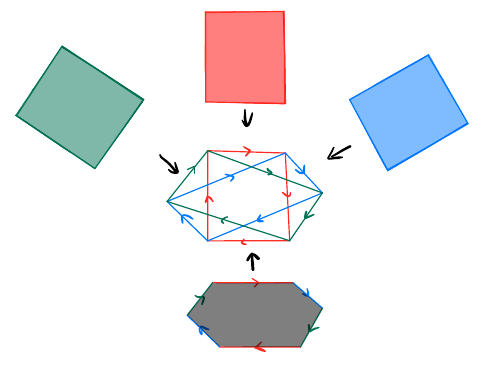}
\caption{Attaching four $2$-cells to the graph $%
{X}_{1/4}$.}
\label{pic-7-case1}
\end{figure}
to get a 2-complex $Y.$ Note that the boundary of each of the 2-cells is
null-homotopic in  $X_K$. By shrinking the corresponding edges in $Y$, we obtain a 2-complex $\bar{Y}$ which contains $\bar{X}_{1/4}$ as a subcomplex.  Therefore, there is
an induced map $f:Y\rightarrow \bar{Y} \rightarrow X_K.$ The group homomorphism $\phi
|_{F_{7}}:F_{7}\hookrightarrow F_{3}\rightarrow K$ factors through $\pi
_{1}(Y)\rightarrow \pi _{1}(X_K)=K.$ Note that we can put a metric on $\bar{Y}$ by first putting a metric on the $2$-cells using the corresponding metric on $X'_K$.  This allows us to conclude the composite $\tilde{\bar{Y}}%
\rightarrow \tilde{X}_K\rightarrow X_K^{\prime }$ is local isometric embedding, where $\tilde{\bar{Y}}$  is the universal cover. In fact, restricted to each $2$-cell, it is already an isometry. One just needs to check at every vertex, it is a local isometric embedding for which one can check through the induced map on the link of each vertex.   Now since $X_K^{\prime }$  is  CAT(0), the local embedding $\tilde{\bar{Y}}\rightarrow X_K^{\prime }$ is in fact a global embedding (\cite[II.4.14]{BH99}). By Lemma \ref{lemma-imy-sep}, we
see that $\phi |_{F_{3}}$ separates $F_{7}$ from the oppressive set $A_{\rho
_{1}}.$

\item  One of $M,N,P$ is even. In this case, the injection $F_{7}\rightarrow F_{3}$ is induced by the graph maps  in Figure \ref{pic-333-new} (comparing with the last $3$ rows in Figure \ref{pic-333-great},  more labels have been added to clarify the situation.)
\begin{figure}[ht]
\centering
\includegraphics[width=0.6\textwidth]{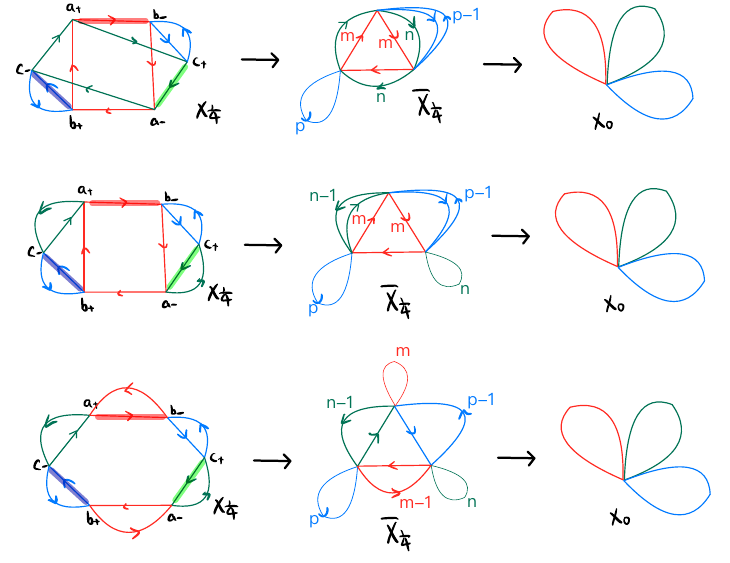}
\caption{The  map $\phi: X_{1/4} \rightarrow \Bar{X}_{1/4} \rightarrow X_0 $, when $(M,N,P) = (2m+1,2n+1,2p)$, $(2m+1,2n,2p)$ or $(2m,2n,2p)$.}
\label{pic-333-new}
\end{figure}

For each even integer of $\{M,N,P\},$ attach two 2-cells to the graph $\bar{X%
}_{1/4}$ of $F_{7}$ along the corresponding two simple monochrome cycles
respectively. The degree of the boundary maps is 2. And for each odd integer of  $\{M,N,P\}$, we attach a $2$-cell to the corresponding simple monochrome cycle with boundary map degree $1$.  Attach an additional
2-cell to the great cycle labeled $xyz$. Note that here in Figure \ref{pic-333-new}, the $x$ (resp. $y,z$) edges are labeled by red (resp. green, blue). The resulting complex $\Bar{Y}$ is obtained from the complexes in Figure \ref{pic-2-cells} by shrinking thickened edges and attaching 2-cells along the great cycles labeled by $xyz$, respectively.

\begin{figure}[ht]
\centering
\includegraphics[width=0.7\textwidth]{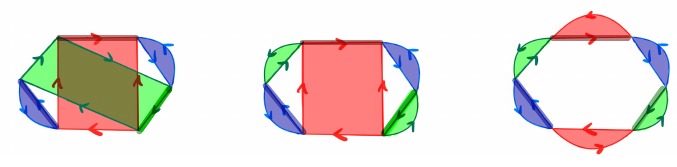}
\caption{The complex $\Bar{Y}$ is obtained from the above complexes by shrinking thickened edges and attaching 2-cells along the great cycles  when one, two, or three of $\{M,N,P\}$ is even.}
\label{pic-2-cells}
\end{figure}

Note that $\pi _{1}(\Bar{Y})$ is a free product $\mathbb{Z}/2\ast \mathbb{Z}/2\ast
F_{2},$ $\mathbb{Z}/2\ast \mathbb{Z}/2\ast \mathbb{Z}/2\ast \mathbb{Z}/2\ast 
\mathbb{Z},$ or a free product of 6 copies of $\mathbb{Z}/2,$ depending on
the number even numbers in $\{M,N,P\}$. Let $%
Y^{\prime }$ be the complex obtained from the universal cover of $\Bar{Y}$ by
 identifying all the 2-cells with the same boundary. Recall that the complex $X_K^{\prime }$ admits a metric so that it
is isometric to the CAT(0) space $\mathbb{H}^{2}$. Note that each attached 2-cell
is null-homotopic in the presentation complex $X_K$ of $K.$ In fact, we could pull the metric on the $2$-cells of $X_K'$ and get a metric on $\bar{Y}$ and $Y'$. Just as before, one sees that the induced map $%
f: \Bar{Y}\rightarrow X_K,f^{\prime }:Y^{\prime }\rightarrow X_K^{\prime }$ are both
locally isometric embedding (under the induced metric). 
Since $X_K^{\prime }$ is isometric to the CAT(0) space $\mathbb{H}^{2}$, the
local embedding $Y^{\prime }\rightarrow X_K^{\prime }$ is a global embedding.
Therefore, $\tilde{\Bar{Y}}\rightarrow \tilde{X}_K$ is an embedding. By   
Lemma \ref{lemma-imy-sep}, we see that $\phi |_{F_{3}}$ separates $F_{7}$ from the
oppressive set $A_{\rho _{1}}.$
\end{enumerate}

\item Suppose that $P=2,M\geq 4,N\geq 4.$ When both $M=2m,N=2n$ are even, the
Artin group of $\Gamma $ splits an HNN extension $A\ast _{B,\beta }$ with $
A=\langle x,y\rangle \cong F_{2},B=\langle x^{M/2},y^{N/2},x^{-1}y\rangle $
and $\beta (x^{M/2})=x^{M/2},\beta (y^{N/2})=y^{N/2},\beta
(x^{-1}y)=yx^{-1}. $ The splitting is induced by immersion of graphs in Figure \ref{2-e-e}. Let 


\begin{equation*}
K=\langle x,y:x^{M}= 1, y^{N}=1, (x^{-1}y)^{2}=1\rangle
\end{equation*}%
and $X_K$ be the presentation complex of $K$. Note that $K$ is a von Dyck group but with a nonstandard presentation. We have to modify the presentation here to apply Lemma \ref{lemma-imy-sep} since $\pi_1(X_A)$ is only a free group of rank $2$, where $X_A$ is as in Figure \ref{2-e-e}.  Let $\tilde{X}_K$ be the universal
cover of $X_K.$ Identify all the 2-cells in $\tilde{X}_K$ with the same boundaries
(i.e.~the $M$ copies of the 2-cell whose boundary word is $x^{M}$ are collapsed
to a single 2-cell, and similarly for $y^{N},(xy)^{2}$). Denote by $X^{\prime
}_K $ the resulting complex. Note that $X^{\prime }_K$ admits a metric so that
it is isometric to a CAT(0) space $\mathbb{R}^{2}$ or $\mathbb{H}^{2}.$
Attach two 2-cells to $\bar{X}_{B}$ along the two simple monochrome cycles (see Figure \ref{2-e-e})
and one $2$-cell to the inner circle labeled by $x^{-1}y,$ all with boundary maps of degree 2. Denote the resulting complex by $Y.$
Let $Y^{\prime }$ be the complex obtained from the universal cover $\tilde{Y}
$ by identifying 2-cells with the same boundary $x^{M},y^{N},(x^{-1}y)^2$. Note that
each attached 2-cell is null-homotopic in the presentation complex $X_K$ of $K.
$ A similar argument shows that $\tilde{Y}\rightarrow \tilde{X}_K$ is an
embedding and thus $\phi $ separates $B$ ($\beta (B),$ respectively) from the
oppressive set $A_{\rho_1}$ (resp. $A_{\rho_2}$)  by Lemma \ref{lemma-imy-sep}.

When at most one of $M,N$ is even, the Artin group $\mathrm{Art}_{2MN}$ splits as a
free product with amalgamation $A\ast _{C}B$ where $A=F_{2},B=F_{3},C=F_{5}$
and the injections $C\hookrightarrow A,C\hookrightarrow B$ are induced by
immersion of graphs, see Theorem \ref{thm-split-sum} (ii). For the top case in Figure \ref{pic-2-odd-odd}, attach four 2-cells to $\bar{X}_{C}$ , with two of them along the two simple monochrome
cycles with boundary maps of degree 1, and the remaining two along the two circles labeled by $%
x^{-1}y$ with boundary maps of degree 2. For the bottom case in Figure \ref{pic-2-odd-odd}, attach five 2-cells to $\bar{X%
}_{C}$, with one along the simple blue cycle with a boundary
map of degree 1, two along the two red cycles with boundary maps of degree 2, and the remaining two along the inner
cycles labeled by $x^{-1}y,$ with boundary maps of degree 2. A similar
argument as before using Lemma \ref{lemma-imy-sep} finishes the proof for $\rho_1$. For $\rho_2$, one argues the same as Case 1 for the map $\rho_2$. 

\item Suppose now that $M=2,N=3,P\geq 6$ is even, Theorem \ref{23even}
implies that $$\mathrm{Art}_{2,3,2m}=\langle
a,b,c:ac=ca,bcb=cbc,(ab)^{m}=(ba)^{m}\rangle $$ ($m\geq 3$) is isomorphic to $
F_{3}\ast _{F_{7}}F_{4}.$ The generators of $F_{3}$ are given by $%
x=c(ba)c^{-1},y=bc,\delta =bc(ba)^{m}.$ The injections $\rho_1: F_{7}\hookrightarrow
F_{3},\rho_2: F_{7}\hookrightarrow F_{4}$ are induced by immersions of graphs in Figure \ref{pic-23even}. Note that the second injection is induced by a $2$-fold covering. In the picture, this is induced by rotating the graph $X_{\frac{1}{4}}$ for $180$ degrees and the quotient graph is $X_{\frac{1}{2}}$.

We proceed to show first that $\phi $ separates $F_{7}$ from the oppressive set $A_{\rho_1 }$. Let 
\begin{equation*}
D_{2,3,2m}=\langle u,v:u^{2m}=v^{3}=(uv)^{2}=1\rangle
\end{equation*}%
be the corresponding von Dyck group. Viewing $D_{2,3,2m}$ as a subgroup of $\Delta_{MNP} ,$ we have $%
u=ab,v=bc.$ The group $F_{3}=\langle x,y,\delta \rangle $ is mapped by $\phi 
$ onto $D_{2,3,2m}$ with 
\begin{equation*}
\phi (x)=v^{-1}uv,\phi (y)=v,\phi (\delta )=vu^{-m}.
\end{equation*}%
We want to apply Lemma \ref{lemma-imy-sep} now, but it only allows us to attach $2$-cells to the graph $X_0$ which has three $1$-cells. So we modify the presentation of $D_{2,3,2m}$ first. Note that the map $\phi |_{F_{3}}:F_{3}\rightarrow D_{2,3,2m}$ factors through 
\begin{equation*}
\bar{\phi}:K:=\langle x,y,\delta :x^{2m}=1, y^{3}=1, (yx)^{2}=1,\delta y x^m y
=1\rangle \rightarrow D_{2,3,2m},
\end{equation*}%
which is an isomorphism with the inverse $\psi :D_{2,3,2m}\rightarrow K$ given by%
\begin{equation*}
\psi (u)=yxy^{-1},\psi (v)=y.
\end{equation*}%
Note that $K$ is isomorphic to the group given by the subpresentation 
\begin{equation*}
K_{1}=\langle x,y:x^{2m}=1, y^{3}=1,(yx)^{2}=1\rangle ,
\end{equation*}%
thus it is still isomorphic to the von Dyck group. 

We now put a  metric on the presentation complex of $D_{2,3,2m}$ just as we have done in the previous cases. In fact, denote the presentation complex of $D_{2,3,2m}$ by $X_D$, taking its universal cover $\tilde{X}_{D}$. There are many $2$-cells that share the same boundary, for example, there are $2m$ $2$-cells that share the same boundary corresponding to the relator $u^{2m}=1$. We collapse all the $2$-cells sharing the same boundary to obtain a contractible $2$-complex which we can put a hyperbolic metric so that it is isometric to the hyperbolic space (or isometric to the Euclidean space, if $m=3$). This metric then descends to a metric on $X_D$ in such a way that its $1$-cells are piecewise geodesics, and all the $2$-cells carry a hyperbolic (resp. Euclidean) metric, see Figure \ref{pic-238} for the case $m=4$. We denote this metric space by $X_D'$.

\begin{figure}[ht!]
\centering
\includegraphics[width=70mm]{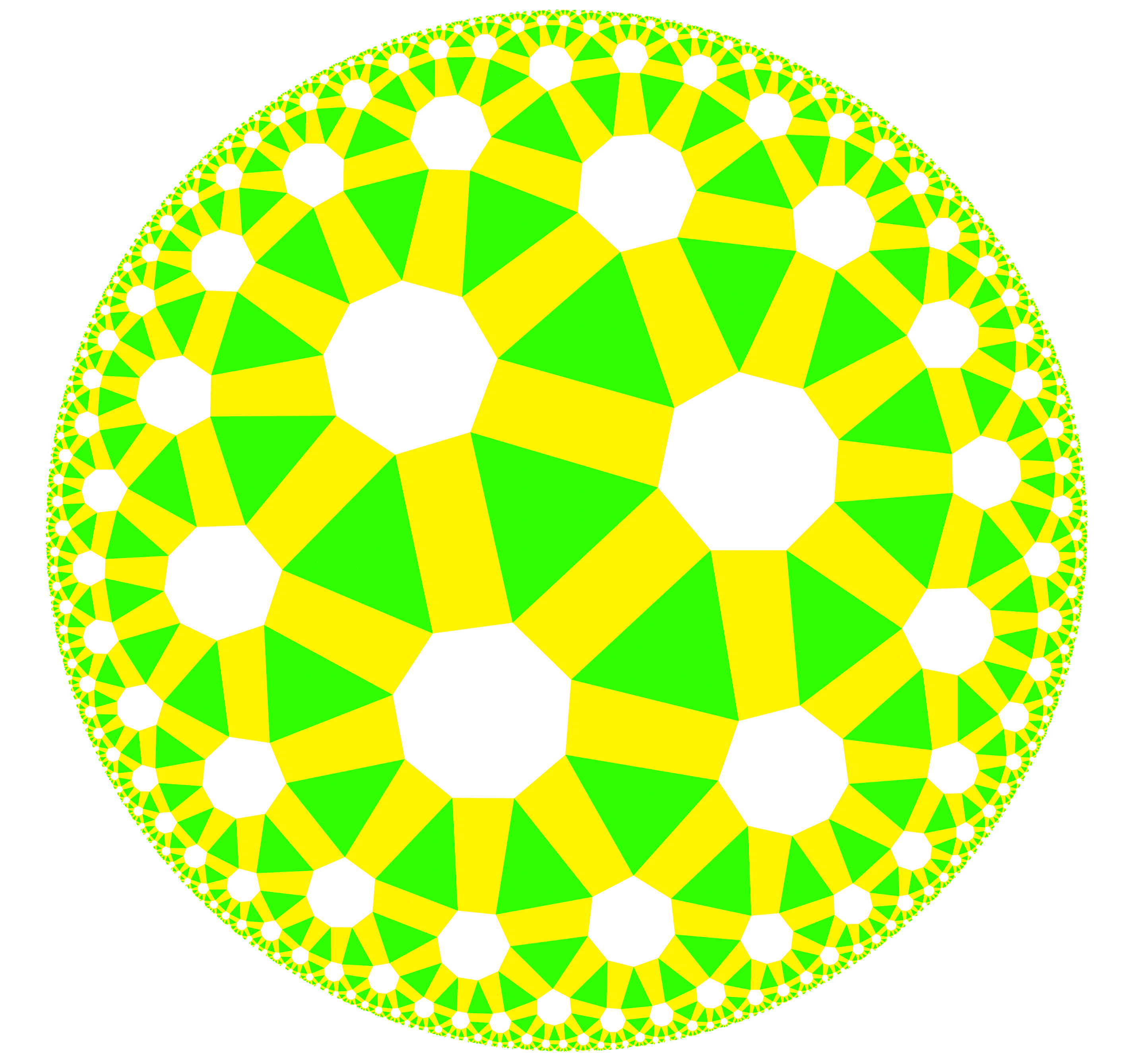}
\caption{Tessellation of the von Dyck group $D_{2,3,8}$. }
\label{pic-238}
\end{figure}

We then add an additional $2$-cell to $X_D$  corresponding to the relator $\delta y x^m y
=1$ obtaining the presentation complex $X_K$ of $K$. To put a metric on $X_K$, we will  construct the metric on the additional $2$-cell using metric on $X_D'$ which then descends to metric on $X_K$. Pick a vertex in the $1$-skeleton of $X_D'$, and we get a piecewise geodesic by traveling  along the $1$-skeleton according to the word $yx^my$. We then glue $2m+3$ rectangles to $X_D'$ to make it a CAT(0) space, as follows. In fact, for each letter $x$ (resp. $y$) in $yx^my$ we first attach a rectangle of the form $[0,|x|] \times [0,1]$ (resp. $[0,|y|] \times [0,1]$) where $|x|$ (resp. $|y|$) denotes the length of the geodesic corresponding to the word $x$ (resp. $y$), to $X_D'$ by identifying the edge $[0,|x|] \times \{0\}$ (resp.  $[0,|y|] \times \{0\}$ ) with the geodesic corresponding to the letter $x$ (resp. $y$). To ensure that it is CAT(0), we have to ensure the link at each break point has a length of at least $2\pi$ (see for example \cite[II.5.6]{BH99}). For that, at each break point $z_i$ (say it is the end point of the geodesic $Z_i$ and the starting point of the geodesic $Z_{i+1}$), we further attach a rectangle $[-1,1]\times [0,1]$ to the metric space, by identifying $[-1,0] \times \{0\}$ with $z_i\times [0,1] \subset Z_{i}\times [0,1] $ via an orientation reversing isometry and $[0,1] \times \{0\}$ with $z_i\times [0,1] \subset Z_{i+1}\times [0,1]$ via an orientation preserving isometry. We denote the result CAT(0) complex by $X_K'$. It also descends to a metric on $X_K$ as well as its universal cover $\tilde{X}_K$.

Attach now seven 2-cells to $\bar{X}_{1/4}$ (see Figure \ref{pic-23even}), one along the simple cycle in the middle labeled by $
x^{m}$ with boundary maps of degree 2, one along the simple cycle labeled by 
$y^{3}$ with a boundary map of degree 1, two along the simple cycle labeled by 
$yx$ with boundary map of degree 2, three along $\delta y x^m y
$.
Let $Y$ be the resulting complex and $Y^{\prime }$ be the complex obtained
from the universal cover $\tilde{Y}$ by identifying 2-cells with the same
boundaries labeled by $x^{m},yx$. On each $2$-cell in these complexes, we also put a metric on it using the metric of the corresponding $2$-cell in $X_K'$. This puts a metric on all the three complexes $Y, \tilde{Y}, Y'$. By construction, the image of the boundary of each attached 2-cell in $Y$ is null-homotopic in the
presentation complex $X_K$, we have a naturally induced map $f:Y\rightarrow X_K, f^{\prime
}:Y^{\prime }\rightarrow X_K^{\prime }$. One checks that now that  both maps are  locally isometric embedding. In fact, restricted to each $2$-cell, it is already an isometry. One just needs to check at every vertex, it is an isometric embedding that one can check through the induced map on the link of each vertex.
Now since $X_K^{\prime }$  is  CAT(0), the local embedding $Y^{\prime
}\rightarrow X_K^{\prime }$ is a global embedding (\cite[II.4.14]{BH99}). Therefore, $\tilde{Y}%
\rightarrow \tilde{X}_K$ must also be an embedding. Now again by  Lemma \ref{lemma-imy-sep}, we see
that $\phi $ separates $F_{7}$ from the oppressive set $A_{\rho_1 }.$ On the other hand, the same proof in  case (1) for dealing with $\rho_2$ also implies  $\phi |_{F_{4}}$ separates $F_{7}$ from the
oppressive set $A_{\rho _{2}}$ in this case. 
\end{enumerate}

\end{proof}

\begin{proof}[Proof of Theorem \protect\ref{th3}]\label{proof:thm3}
It's enough to prove that the triangle Artin groups, not covered in Theorem \ref{mthm-poly-free}. But the remaining cases are either of irreducible finite type or a direct product of a dihedral Artin group with an infinite cyclic group which are known to be virtually poly-free. In fact, when it is not a direct product, it is either of the form $(2,3,3)$, $(2,3,4)$ or $(2,3,5)$. In the $(2,3,3)$ case, it is a braid group that is known to be virtually normally poly-free \cite{FN62}.  The Artin group of $(2,3,4)$ can be embedded into braid groups \cite{Cr99} hence it is also virtually normally poly-free.  The fact that the Artin group of type $(2,3,5)$ is virtually poly-free can be deduced from  the fibration in \cite[Proposition 2]{Br73}. Since any dihedral Artin group is normally poly-free \cite[Lemma 3.4]{Wu22}, it follows that any Artin group of type $(2,2,n)$ is also normally poly-free. This finishes the proof. 
\end{proof}

\section{Commutator subgroups of  triangle Artin groups}

In this section, we use the Reidemeister--Schreier method to calculate the first homology  of the commutator subgroups of many triangle Artin groups. As an application,  we show that those triangle Artin groups can not be poly-free.

\subsection{Presentations of the commutator subgroups}

We consider presentations of the commutator subgroups of the Artin group$$%
\mathrm{Art}_{MNP}=\langle a,b,c \mid \{a,b\}_{M} \cdot\{b,a\}_M^{-1} =1,\{b,c\}_{N}\cdot\{c,b\}_N^{-1} =1,\{a,c\}_{P}\cdot \{c,a\}_P^{-1} =1\rangle.$$ Let $%
f:\mathrm{Art}_{MNP}\rightarrow \mathbb{Z}$ be the degree map that maps each generator to 
$1\in \mathbb{Z}.$ When the set $\{M,N,P\}$ has at least two odd numbers, $\ker f$ is the commutator subgroup $\mathrm{Art}_{MNP}^{\prime }$ (see \cite{MR}, Proposition 3.1). We choose $%
R=\{a^{i}:i\in \mathbb{Z}\}$ as a Schreier system for $\mathrm{Art}_{MNP}$ modulo $\mathrm{Art}_{MNP}^{\prime }$.
The Reidemeister theorem (cf. Theorem \ref{schreier}) implies that $\mathrm{Art}_{MNP}^{\prime }$ is generated by $$%
\{s_{a^{i},b},s_{a^{i},c}, i\in \mathbb{Z}\}$$
since $s_{a^{i},a}=1$.
\begin{lemma}
\label{relator}
\begin{enumerate}
    \item If $M=2,$ we have $$s_{a^{i+1},b}=s_{a^{i},b}$$ for each $%
i\in \mathbb{Z}$;

    \item If $N=2k+1$, so there is a relator $(bc)^{k}b(c^{-1}b^{-1})^{k}c^{-1}$ in $\mathrm{Art}_{MNP}$, we have the corresponding relation in $\ker f$:
\begin{equation*}
(\Pi _{i=0}^{k-1}(s_{a^{2i},b} )(s_{a^{2i+1},c}))(s_{a^{2k},b})=(s_{0,c})(\Pi
_{i=1}^{k}(s_{a^{2i-1},b})(s_{a^{2i},c}));
\end{equation*}

    \item If $N=2m$, so there is a relator $(bc)^{m}(b^{-1}c^{-1})^{m}$ in $\mathrm{Art}_{MNP}$, we have the corresponding relation in $\ker f$:
\begin{equation*}
\Pi _{i=0}^{m-1}(s_{a^{2i},b})(s_{a^{2i+1},c})=\Pi
_{i=0}^{m-1}(s_{a^{2i},c})(s_{a^{2i+1},b}).
\end{equation*}

    \item If $P=2l+1$, so there is a relator $(ac)^{l}a(c^{-1}a^{-1})^{l}c^{-1}$ in $\mathrm{Art}_{MNP}$, we have the corresponding relation in $\ker f$:

\begin{equation*}
\Pi _{i=0}^{l-1}(s_{a^{2i+1},c})=\Pi _{i=0}^{l}(s_{a^{2i},c}).
\end{equation*}
    
\end{enumerate}

\end{lemma}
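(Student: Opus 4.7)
The plan is to apply the Reidemeister--Schreier rewriting function $\tau$ from Theorem~\ref{schreier} directly to (conjugates of) the defining relators, using the Schreier system $R=\{a^{i}:i\in\mathbb{Z}\}$. The crucial simplification is that $Ka=a^{i}\cdot a=a^{i+1}\in R=\overline{Ka}$, so $s_{a^{i},a}=1$ for every $i\in\mathbb{Z}$. Consequently, every occurrence of $a^{\pm 1}$ in a relator contributes a trivial factor under $\tau$, and the whole computation reduces to tracking the subscripts attached to the $b^{\pm 1}$ and $c^{\pm 1}$ letters. By the definition of $\tau$, a positive-exponent letter contributes the generator $s_{a^{d},\cdot}$ where $d$ is the abelianization degree of the prefix strictly before that letter, and a negative-exponent letter contributes $s_{a^{d},\cdot}^{-1}$ where $d$ is the degree of the prefix through (and including) that letter.

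For part (1), I conjugate the $M=2$ relator $aba^{-1}b^{-1}$ by $K=a^{i}$ and apply $\tau$ to the word $a^{i+1}ba^{-1}b^{-1}a^{-i}$. Only the two $b$-letters contribute: the positive $b$ sees a prefix of degree $i+1$, and the negative $b^{-1}$ is included in a prefix of degree $i$. Thus $\tau=s_{a^{i+1},b}\,s_{a^{i},b}^{-1}=1$, which is precisely (1).

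For parts (2), (3), and (4) I apply $\tau$ directly (i.e.\ with $K=1$) to each stated relator. In part (2), the first half $(bc)^{k}b$ has all positive exponents and its prefix degrees step through $0,1,2,\ldots,2k$, producing exactly $\prod_{i=0}^{k-1}s_{a^{2i},b}s_{a^{2i+1},c}\cdot s_{a^{2k},b}$. The second half $(c^{-1}b^{-1})^{k}c^{-1}$ has all negative exponents, and its inclusion-prefix degrees descend through $2k,2k-1,\ldots,0$. Writing out its $\tau$-image and then inverting (because the two halves multiply to~$1$) reverses the order of the factors and yields $s_{a^{0},c}\prod_{i=1}^{k}s_{a^{2i-1},b}s_{a^{2i},c}$, as claimed. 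Part (3) is analogous: both halves have $2m$ relevant letters and, after inversion of the second half, the only change is that the roles of $b$ and $c$ are swapped at even versus odd indices. In part (4) the letters $a^{\pm 1}$ all vanish, so only the $c^{\pm 1}$ factors remain, and the identity is immediate.

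The main difficulty is purely notational rather than conceptual: one must be careful whether the prefix degree for a negative-exponent letter is taken before or after that letter, and the endpoints $i=0$ and $i=k$ (or $m$) of the two products have to be matched correctly after inversion. With the conventions of Theorem~\ref{schreier} applied faithfully, all four formulas follow by direct enumeration.
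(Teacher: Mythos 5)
Your proposal is correct and follows essentially the same approach as the paper: apply the Reidemeister--Schreier rewriting function $\tau$ to the relators, using that $s_{a^i,a}=1$ to reduce everything to tracking the abelianization degree of the prefix at each $b^{\pm1}$ or $c^{\pm1}$ letter, and then rearrange. The only cosmetic difference is in part (1), where you apply $\tau$ to the conjugated word $a^{i}(aba^{-1}b^{-1})a^{-i}$ directly, while the paper computes $\tau(aba^{-1}b^{-1})$ with $K=1$ and then invokes Lemma~\ref{shift} to conjugate the resulting identity by $a^{i}$; both are equivalent instances of the same mechanism.
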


\begin{proof}
Note that $$1=\tau
(aba^{-1}b^{-1})=(s_{a^{0},a})(s_{a^{1},b})(s_{a^1,a}^{-1})(s_{a^{0},b}^{-1})=(s_{a^{1},b})(s_{a^{0},b}^{-1}). 
$$ By Lemma \ref{shift}, we have $$
1=a^{i}(s_{a^{1},b})(s_{a^{0},b}^{-1})a^{-i}=(s_{a^{i+1},b})(s_{a^{i},b}^{-1}),$$
hence $s_{a^{i+1},b}=s_{a^{i},b}$ for each $i\in \mathbb{Z}$.

Similarly, when $N=2k+1,$ we have%
\begin{eqnarray*}
1 &=&\tau ((bc)^{k}b(c^{-1}b^{-1})^{k}c^{-1}) \\
&=&(s_{a^{0},b})(s_{a^{1},c})(s_{a^{2},b})...(s_{a^{2k-1},c})(s_{a^{2k},b})(s_{a^{2k},c}^{-1})(s_{a^{2k-1},b}^{-1})\cdots (s_{0,c}^{-1})
\\
&=&(\Pi _{i=0}^{k-1}(s_{a^{2i},b})(s_{a^{2i+1},c}))(s_{a^{2k},b})(\Pi
_{i=k}^{1}(s_{a^{2i},c}^{-1})(s_{a^{2i-1},b}^{-1}))(s_{0,c}^{-1}),
\end{eqnarray*}%
implying that $$(\Pi
_{i=0}^{k-1}(s_{a^{2i},b})(s_{a^{2i+1},c}))(s_{a^{2k},b})=(s_{0,c})(\Pi
_{i=1}^{k}(s_{a^{2i-1},b})(s_{a^{2i},c})).$$ 
Similarly for $(bc)^{m}(b^{-1}c^{-1})^{m}$ , we have%
\begin{eqnarray*}
1 &=&\tau ((bc)^{m}(b^{-1}c^{-1})^{m}) \\
&=&(s_{a^{0},b})(s_{a^{1},c})(s_{a^{2},b})(s_{a^{3},c})...(s_{a^{2m-2},b})(s_{a^{2m-1},c})(s_{a^{2m-1},b}^{-1})(s_{a^{2m-2},c}^{-1})...(s_{a^{1},b}^{-1})(s_{a^{0}.c}^{-1})
\\
&=&\Pi _{i=0}^{m-1}(s_{a^{2i},b})(s_{a^{2i+1},c})\Pi
_{i=m}^{1}(s_{a^{2i-1},b}^{-1})(s_{a^{2i-2},c}^{-1})
\end{eqnarray*}%
implying that $$\Pi _{i=0}^{m-1}(s_{a^{2i},b})(s_{a^{2i+1},c})=\Pi
_{i=0}^{m-1}(s_{a^{2i},c})(s_{a^{2i+1},b}).$$

When $P=2l+1,$ we have%
\begin{eqnarray*}
1 &=&(\Pi _{i=0}^{l-1}(s_{a^{2i},a})(s_{a^{2i+1},c}))(s_{a^{2k},a})(\Pi
_{i=l}^{1}(s_{a^{2i},c}^{-1})(s_{a^{2i-1},a}))(s_{0,c}^{-1}) \\
&=&(\Pi _{i=0}^{l-1}(s_{a^{2i+1},c}))(\Pi
_{i=l}^{1}(s_{a^{2i},c}^{-1}))(s_{0,c}^{-1}),
\end{eqnarray*}%
implying $\Pi _{i=0}^{l-1}(s_{a^{2i+1},c})=\Pi _{i=0}^{l}(s_{a^{2i},c}).$ 
\end{proof}

 According to Lemma \ref{shift}, we have $$a^i (s_{a^{m},b}) a^{-i} = s_{a^{i+m},b},$$ 
$$a^i (s_{a^{m},c}) a^{-i} = s_{a^{i+m},c}$$ for any integers $i,m$. The conjugating by $a^i$ of the relators in Lemma \ref{relator} gives new relators for $\ker f$. The exponents of $a$ in the new identities are simply increased by $i$.  We call this phenomenon a degree-shifting.

In the following, we will use the notation $%
S_{i,c}=[s_{a^{i},c}],S_{i,b}=[s_{a^{i},b}]$ to denote the corresponding homology class in $ H_{1}(\mathrm{Art}_{MNP}^{\prime })$.




\begin{proof}[Proof of Theorem \protect\ref{th2}]
Without loss of generality, we assume that $M,P$ are odd. By Lemma \ref%
{relator} and the degree-shifting, we have the following 
\begin{eqnarray}
\label{equb}\sum_{i=1}^{M}(-1)^{i}S_{i,b} &=&0, \\
\label{equc}\sum_{i=1}^{P}(-1)^{i}S_{i,c} &=&0, \\
\label{equbc}\sum_{i=1}^{N}(-1)^{i}S_{i,b} &=&\sum_{i=1}^{N}(-1)^{i}S_{i,c}.
\end{eqnarray}%
Applying a shift of degree $1$ to equation (\ref{equb}), we have 
\begin{eqnarray*}
   \sum_{i=1}^{M}(-1)^{i}S_{i+1,b} = -S_{2,b}+S_{3,b}-S_{4,b}+\cdots -S_{M+1,b} &=&0
\end{eqnarray*}
Now add this to (\ref{equb}), we have $S_{1,b}=-S_{1+M,b}$, and  $S_{i,b}=-S_{i+M,b}$ by degree-shifting. The same method applying to (\ref{equc})  shows that $S_{i+P,c}=-S_{i,c}$.  Suppose that $N=k_{1}M+m_{1},N=k_{2}P+p_{1}$ for integers $k_{1},k_{2} \geq 0$ and $%
0\leq m_{1}<M,0\leq p_{1}<P.$ Since $\gcd (M,N)= \gcd (P,N)=1,$ we know
that $m_{1},p_{1}\neq 0.$ Equation (\ref{equbc}) now reduces to:
\begin{equation}\label{equbc-mp}
\sum_{i=1}^{m_{1}}(-1)^{i}S_{i,b}=\sum_{i=1}^{p_{1}}(-1)^{i}S_{i,c}.
\end{equation}%
We further have 
\begin{equation}\label{equbc-new}
\sum_{i=1}^{m_{1}}(-1)^{i}S_{i,b}=(-1)^{k}%
\sum_{i=1}^{m_{1}}(-1)^{i}S_{i+kM,b}=(-1)^{k}%
\sum_{i=1}^{p_{1}}(-1)^{i}S_{i+kM,c}
\end{equation}%
for any $k\in \mathbb{Z}$, where the first identity follows from the fact that 
$S_{i+M,b}=-S_{i,b}$ for any $i$ and the second identity follows from a
shifting of (\ref{equbc-mp}) of degree $kM$. Since $\gcd (M,P)=1,$ for any integer $j$
there exist $k,l$ such that $j=kM+lP$. Then we have that 
\begin{eqnarray}
\label{eq1}
\sum_{i=1}^{p_{1}}(-1)^{i}S_{i,c} &\overset{(\ref{equbc-mp})}{=}&\sum_{i=1}^{m_{1}}(-1)^{i}S_{i,b} 
 \nonumber,  \\
&\overset{(\ref{equbc-new})}{=}&(-1)^{k}\sum_{i=1}^{p_{1}}(-1)^{i}S_{i+kM,c} 
\nonumber \\
&=&(-1)^{k}\sum_{i=1}^{p_{1}}(-1)^{i}S_{i+j-lP,c} 
\nonumber \\
&=&(-1)^{k-l}\sum_{i=1}^{p_{1}}(-1)^{i}S_{i+j,c} 
\nonumber \\
&=&(-1)^{j}\sum_{i=1}^{p_{1}}(-1)^{i}S_{i+j,c},
\end{eqnarray}%
where the last identity uses that $(-1)^{k-l}=(-1)^{j}$ (in fact, since both 
$M,P$ are odd and $j=kM+lP$, one checks that $j$ is even if and only $k-l$
is even). If $p_{1}=1$, since (6) holds for any $j$, we have 
\begin{equation*}
\sum_{i=1}^{m_{1}}(-1)^{i}S_{i,b}=(-1)^{j}(-1)S_{j+1,c}=(-1)^{j+1}(-1)S_{j+2,c},
\end{equation*}%
which implies 
\begin{equation}\label{equ-j}
S_{j+1,c}=-S_{j+2,c} \text{ for any } j.
\end{equation}%
Since $%
\sum_{i=1}^{P}(-1)^{i}S_{i,c}=0$ and $P$ is odd, we have $S_{j,c}=0$ for any 
$j.$ So let us assume that $p_{1}> 1$ and write $P=k'p_{1}+p_{2}$ for $k'\geq 0,0\leq
p_{2}<p_{1}.$ Since $\gcd (p_{1},P)=\gcd (N,P)=1,$ we know that $p_{2}\neq 0.
$ Then we have%
\begin{eqnarray*}
0 &=&\sum_{i=1}^{P}(-1)^{i}S_{i,c} \\
&=&\sum_{i=1}^{k^{\prime
}p_{1}}(-1)^{i}S_{i,c}+\sum_{i=k'p_1+1}^{k'p_1+p_{2}}(-1)^{i}S_{i,c}
\\
&=&\sum_{i=1}^{p_{1}}(-1)^{i}S_{i,c}+%
\sum_{i=p_{1}+1}^{2p_{1}}(-1)^{i}S_{i,c}+\cdots +\sum_{i=k^{\prime
}p_{1}-p_{1}+1}^{k^{\prime
}p_{1}}(-1)^{i}S_{i,c}+ \sum_{i=k'p_1+1}^{k'p_1+p_{2}}(-1)^{i}S_{i,c}
\\
& 
= &\sum_{i=1}^{p_{1}}(-1)^{i}S_{i,c}+%
\sum_{i=1}^{p_{1}}(-1)^{i+p_{1}}S_{i+p_{1},c}+\cdots
+\sum_{i=1}^{p_{1}}(-1)^{i+k^{\prime }p_{1}-p_{1}}S_{i+k^{\prime
}p_{1}-p_{1},c}+\sum_{i=1}^{p_{2}}(-1)^{k'p_1+i}S_{k'p_1+i,c} \\
&\overset{(\ref{eq1})}{=}&\sum_{i=1}^{p_{1}}(-1)^{i}S_{i,c}+(-1)^{2p_{1}}%
\sum_{i=1}^{p_{1}}(-1)^{i}S_{i,c}+\cdots +(-1)^{2(k^{\prime
}p_{1}-p_{1})}\sum_{i=1}^{p_{1}}(-1)^{i}S_{i,c}+%
(-1)^{2k'p_1}\sum_{i=1}^{p_{2}}(-1)^{i}S_{i,c} \\
&=&k^{\prime
}\sum_{i=1}^{p_{1}}(-1)^{i}S_{i,c}+\sum_{i=1}^{p_{2}}(-1)^{i}S_{i,c} \\
&\overset{(\ref{equbc-mp})}{=}&k^{\prime
}\sum_{i=1}^{m_{1}}(-1)^{i}S_{i,b}+\sum_{i=1}^{p_{2}}(-1)^{i}S_{i,c}.
\end{eqnarray*}%
This means: 
\begin{equation}
\sum_{i=1}^{p_{2}}(-1)^{i}S_{i,c}=-k^{\prime
}\sum_{i=1}^{m_{1}}(-1)^{i}S_{i,b}.  \label{equ3}
\end{equation}%
Applying a degree shifting of $kM$ to the identity (\ref{equ3}), we have 
\begin{equation*}
\sum_{i=1}^{p_{2}}(-1)^{i}S_{i,c}=(-1)^{j}\sum_{i=1}^{p_{2}}(-1)^{i}S_{i+j,c}
\end{equation*}%
for any $j\in \mathbb{Z}$ by exactly the same proof of the identities in (\ref{eq1}). When $%
p_{2}=1,$ we have 
\begin{equation*}
-k^{\prime
}\sum_{i=1}^{m_{1}}(-1)^{i}S_{i,b}=(-1)^{j}(-1)S_{j+1,c}=(-1)^{j+1}(-1)S_{j+2,c},
\end{equation*}%
which agian implies $S_{j,c}=-S_{j+1,c}$ for any $j.$ Since $%
\sum_{i=1}^{P}(-1)^{i}S_{i,c}=0$ and $P$ is odd, we have $S_{j,c}=0$ for any 
$j.$ When $p_{2}> 1,$ we write $p_{1}=k^{\prime \prime }p_{2}+p_{3}$ for $k''\geq0,
0\leq p_{3}<p_{2}.$ Since  $\gcd
(p_{1},P)=1$ and we are just running the Euclidean algorithm for the pair $(P,p_1)$,  we know that $\gcd (p_{1},p_{2})=\gcd (p_{2},p_{3})=1$ and $p_{3}\neq 0.$ Then just as before, we
have 
\begin{eqnarray*}
&&\sum_{i=1}^{m_{1}}(-1)^{i}S_{i,b} \\
&=&\sum_{i=1}^{p_{1}}(-1)^{i}S_{i,c} \\
&=&\sum_{i=1}^{k^{\prime \prime
}p_{2}}(-1)^{i}S_{i,c}+\sum_{i=k''p_2+1}^{k''p_2+p_{3}}(-1)^{i}S_{i,c} \\
&=&\sum_{i=1}^{p_{2}}(-1)^{i}S_{i,c}+%
\sum_{i=p_{2}+1}^{2p_{2}}(-1)^{i}S_{i,c}+\cdots +\sum_{i=k^{\prime
}p_{2}-p_{2}+1}^{k^{\prime \prime
}p_{2}}(-1)^{i}S_{i,c}+\sum_{i=k''p_2+1}^{k''p_2+p_{3}}(-1)^{i}S_{i,c} \\
&=&\sum_{i=1}^{p_{2}}(-1)^{i}S_{i,c}+%
\sum_{i=1}^{p_{2}}(-1)^{i+p_{2}}S_{i+p_{2},c}+\cdots
+\sum_{i=1}^{p_{2}}(-1)^{i+k^{\prime \prime }p_{2}-p_{2}}S_{i+k^{\prime
\prime }p_{2}-p_{2},c}+\sum_{i=1}^{p_{3}}(-1)^{k''p_2+i}S_{k''p_2+i,c} \\
&=&\sum_{i=1}^{p_{2}}(-1)^{i}S_{i,c}+(-1)^{2p_{2}}%
\sum_{i=1}^{p_{2}}(-1)^{i}S_{i,c}+\cdots +(-1)^{2(k^{\prime
}p_{2}-p_{2})}\sum_{i=1}^{p_{2}}(-1)^{i}S_{i,c}+%
(-1)^{2k''p_2}\sum_{i=1}^{p_{3}}(-1)^{i}S_{i,c} \\
&=&k^{\prime \prime
}\sum_{i=1}^{p_{2}}(-1)^{i}S_{i,c}+\sum_{i=1}^{p_{3}}(-1)^{i}S_{i,c} \\
&=&-k^{\prime }k^{\prime \prime
}\sum_{i=1}^{m_{1}}(-1)^{i}S_{i,b}+\sum_{i=1}^{p_{3}}(-1)^{i}S_{i,c}.
\end{eqnarray*}%
Hence
\begin{equation}
\sum_{i=1}^{p_{3}}(-1)^{i}S_{i,c}= (1+k^{%
\prime }k^{\prime \prime })\sum_{i=1}^{m_{1}}(-1)^{i}S_{i,b}.\label{eq4}
\end{equation}%
Note that the right-hand side of the equality (\ref{eq4}) depends only on $%
S_{i,b}$'s. Therefore, we can continue to do degree shiftings to the
equality (\ref{eq4}).  Since $p_1$ and $P$ are coprime, the Euclidean algorithm will eventually arrive at some $p_d= \gcd(p_1,P)=1$. Hence  
$S_{i,c}$ must equal to $0$ for any $i$. A similar argument proves that $%
S_{i,b}=0$ for any $i$. Since the first homology group of $A_{M,N,P}^{\prime
}$ is generated by $S_{i,c},S_{i,b},i=1,2,\ldots ,$ this proves that $%
A_{M,N,P}^{\prime }$ is perfect.
\end{proof}

\subsection{Coherence of triangle Artin groups}

Recall that a group $G$ is coherent if every finitely generated subgroup $H<G
$ is finitely presented.

\begin{corollary}
\label{coherent}Let $\mathrm{Art}_{MNP}$ be a two-dimensional triangle Artin
group such that $M,N,P$ are pairwise coprime, then its commutator subgroup
is not finitely presented. In particular, $\mathrm{Art}_{MNP}$ is not
coherent.
\end{corollary}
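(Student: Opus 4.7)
My plan is to combine the perfectness of the commutator subgroup $G'$ of $G=\mathrm{Art}_{MNP}$ established in Theorem~\ref{thm:artin-comm-perf} with the low-dimensional homology of $G$, via the Hochschild--Serre spectral sequence of $1\to G'\to G\to\mathbb{Z}\to 1$, to derive a contradiction under the hypothesis that $G'$ is of type $FP_2$ (in particular, finitely presented).

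First I would record the homological input for $G$. Since $\mathrm{Art}_{MNP}$ is two-dimensional, its presentation $2$-complex is aspherical, so $\mathrm{cd}(G)\le 2$ and $H_n(G)=0$ for $n\ge 3$. From $3$ generators and $3$ relators one has $\chi(G)=1$, and since pairwise coprimality forces $G^{\mathrm{ab}}=\mathbb{Z}$, this gives $\mathrm{rk}\,H_2(G)=1$. Because $\mathrm{cd}(\mathbb{Z})=1$, the $E_2$-page of Hochschild--Serre is concentrated in columns $p\in\{0,1\}$, so all differentials vanish and $E_\infty=E_2$. Using perfectness $H_1(G')=0$, one reads off $H_2(G)\cong H_2(G')_{\mathbb{Z}}$ (coinvariants), while the $(1,2)$-contribution to $H_3(G)=0$ yields $H_2(G')^{\mathbb{Z}}=0$ (invariants). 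If $G'$ were finitely presented, then $H_2(G')$ would be a finitely generated abelian group, so $H_2(G')\otimes\mathbb{Q}$ would be a finite-dimensional $\mathbb{Q}$-vector space on which the endomorphism $t-1$ satisfies $\dim\ker=\dim\mathrm{coker}$ by rank--nullity, forcing $0=\dim_{\mathbb{Q}}(H_2(G')\otimes\mathbb{Q})^{\mathbb{Z}}=\dim_{\mathbb{Q}}(H_2(G')\otimes\mathbb{Q})_{\mathbb{Z}}=1$, a contradiction.

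For the coherence assertion, I would first verify that $G'$ is finitely generated. The Reidemeister--Schreier generating set $\{s_{a^i,b},s_{a^i,c}:i\in\mathbb{Z}\}$ together with the relations of Lemma~\ref{relator} and their $a$-conjugates should, under the pairwise coprimality assumption, collapse to a finite generating set at the group level, analogously to (but more delicately than) the proof of Theorem~\ref{thm:artin-comm-perf} which performed the corresponding elimination in $H_1(G')$. Once this is done, $G'$ is a finitely generated subgroup of $G$ that is not finitely presented, so $G$ is not coherent. The main obstacle I anticipate is precisely this finite-generation step: eliminating a homology class in the abelianization is looser than eliminating a group generator, so lifting the argument to $G'$ itself will likely require a careful manipulation using Lemma~\ref{shift} or an appeal to BNS-invariant techniques for two-dimensional Artin groups.
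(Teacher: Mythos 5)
Your spectral-sequence argument for the failure of finite presentability is correct and genuinely different from the paper's. The paper simply invokes Bieri's Theorem B (if $G$ has cohomological dimension $2$ and $N\trianglelefteq G$ is of infinite index and of type $FP_2$, then $N$ is free) and combines it with Theorem~\ref{thm:artin-comm-perf}: a nontrivial perfect group cannot be free, so $\mathrm{Art}_{MNP}'$ cannot be finitely presented. Your proof in effect unfolds a special case of Bieri's argument directly: you degenerate Lyndon--Hochschild--Serre at $E_2$ (using $\mathrm{cd}\,\mathbb{Z}=1$ and $\mathrm{cd}\,G'\le \mathrm{cd}\,G=2$), kill $E^2_{1,1}$ via perfectness, feed in $\mathrm{rk}\,H_2(G)=1$ from $\chi(G)=1$ and $G^{\mathrm{ab}}\cong\mathbb{Z}$, and extract the rank--nullity contradiction $0=\dim_{\mathbb{Q}}\bigl(H_2(G')\otimes\mathbb{Q}\bigr)^{\mathbb{Z}}\neq\dim_{\mathbb{Q}}\bigl(H_2(G')\otimes\mathbb{Q}\bigr)_{\mathbb{Z}}=1$. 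Every step checks out, including the identification of $H_1(\mathbb{Z},-)$ with invariants and $H_0(\mathbb{Z},-)$ with coinvariants and the flatness needed to commute these with $\otimes\,\mathbb{Q}$. Your route is more self-contained in that it avoids citing Bieri, though both approaches rest on the asphericity of the presentation $2$-complex for two-dimensional triangle Artin groups (to know $\mathrm{cd}\,G=2$, resp.\ $\chi(G)=1$), so neither is more elementary in any essential way.

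On the coherence step, your concern about finite generation of $G'$ is well placed, and the paper is in fact just as silent on it: both arguments tacitly need $G'$ finitely generated before ``$G'$ not finitely presented'' yields incoherence. The needed fact can be extracted from Lemma~\ref{relator} together with the degree-shifting of Lemma~\ref{shift}: the relators involving $a$ (the $(ab)$- and $(ac)$-relators, odd or even) each produce, after shifting by $a^i$, a recurrence that expresses $s_{a^{j},b}$ (resp.\ $s_{a^{j},c}$) for $|j|$ large as a word in finitely many low-index generators, already at the level of $G'$ rather than merely in $H_1(G')$. So your plan goes through, and the careful manipulation via Lemma~\ref{shift} that you anticipate is exactly what is needed; an appeal to BNS-invariant machinery is not required.
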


\begin{proof}
By a theorem of Bieri \cite[Theorem B]{Bi76}, if its commutator subgroup is
finitely presented, then it must be free. But this contradicts Theorem \ref%
{th2}.
\end{proof}

\begin{remark}
The result in the corollary is known by slightly different methods. In fact,
there is a complete classification of coherent Artin groups, see \cite[%
Section 3]{Go04} and \cite{Wi13}.
\end{remark}

\begin{proposition}

\label{nonfree}Let $f:\mathrm{Art}_{MNP}\rightarrow \mathbb{Z}$ be the
degree map. Then $\ker f$ is not free.
\end{proposition}
\begin{proof}

Suppose that $\ker f$ is free. Feighn and Handel \cite{fh} prove that a
free-by-cyclic group is coherent. Since $f$ is surjective with its kernel
free, this would imply that the group $\mathrm{Art}_{MNP}$ is coherent. 
However, Gordon \cite[Lemma 2.3]{Go04}  shows that a triangle
Artin group of infinite type is incoherent. From Gordon \cite[Lemma 2.4]
{Go04} and Wise \cite{Wi13}, we know that the Artin groups $%
A_{2,3,3},A_{2,3,4},A_{2,3,5}$ are incoherent. Therefore, the only possible coherent
triangle Artin groups are $A_{2,2,n}=\mathbb{Z}\times A_{I_{n}},$ a product
of $\mathbb{Z}$ and the dihedral Artin group $A_{I_{n}}.$ However, it is
clear that $\ker f$ is not free when $\mathrm{Art}_{MNP}=A_{2,2,n}$ by
considering the cohomological dimension.
\end{proof}

\bigskip

\end{document}